\documentclass{amsart}
\usepackage[english]{babel} 
\usepackage[utf8]{inputenc} 
\usepackage{amscd,amsmath,amsthm,amssymb}
\usepackage{enumerate}
\usepackage{tikz}
\usepackage[all]{xy}
\usetikzlibrary{cd}
\usepackage[colorlinks=true, linkcolor=blue, citecolor=blue, pagebackref]{hyperref}

\frenchspacing
\addtolength{\textwidth}{1cm}
\addtolength{\hoffset}{-.5cm}

\newtheorem{theorem}{Theorem}[section]
\newtheorem{lemma}[theorem]{Lemma}
\newtheorem{proposition}[theorem]{Proposition}
\newtheorem{corollary}[theorem]{Corollary}

\theoremstyle{definition}
\newtheorem{definition}{Definition}[section]
\newtheorem{remark}{Remark}[section]

\newenvironment{acknowledgement}{\par\addvspace{17pt}\small\rm
\trivlist\item[\hskip\labelsep{\it Acknowledgements.}]}
{\endtrivlist\addvspace{6pt}}

\renewcommand{\bar}[1]{\overline{#1}}

\newcommand{\pallino}{{\scriptscriptstyle\bullet}}

\newcommand{\anchor}{\bigstar}

\newcommand{\car}{\operatorname{char}}
\newcommand{\Der}{\operatorname{Der}}

\newcommand{\Fun}{\operatorname{Fun}}
\newcommand{\Def}{\operatorname{Def}}
\newcommand{\MC}{\operatorname{MC}}
\newcommand{\Ob}{\operatorname{Ob}}

\newcommand{\Spec}{\operatorname{Spec}}
\newcommand{\colim}{\operatorname{colim}}
\newcommand{\cdgamz}{\mathbf{CDGA}_{\mathbb{K}}^{\leq 0}}
\newcommand{\Id}{\operatorname{Id}}
\newcommand{\id}{\operatorname{Id}}

\newcommand{\K}{\mathbb{K}}
\newcommand{\Z}{\mathbb{Z}}
\newcommand{\N}{\mathbb{N}}

\newcommand{\Oh}{\mathcal{O}}

\newcommand{\sB}{\mathcal{B}}
\newcommand{\sC}{\mathcal{C}}
\newcommand{\sD}{\mathcal{D}}
\newcommand{\sF}{\mathcal{F}}

\newcommand{\sN}{\mathcal{N}}

\newcommand{\sW}{\mathcal{W}}

\newcommand{\CDGA}{\mathbf{CDGA}}
\newcommand{\Art}{\mathbf{Art}}
\newcommand{\Alg}{\mathbf{Alg}}
\newcommand{\Set}{\mathbf{Set}}
\newcommand{\bM}{\mathbf{M}}

\begin{document}

\title{On deformations of diagrams of commutative algebras}
\author{Emma Lepri}
\address{\newline
Universit\`a degli studi di Roma La Sapienza,\hfill\newline
Dipartimento di Matematica \lq\lq Guido
Castelnuovo\rq\rq,\hfill\newline
P.le Aldo Moro 5,
I-00185 Roma, Italy.\medskip}
\email{emma.lepri3@gmail.com}

\author{Marco Manetti}
\email{manetti@mat.uniroma1.it}
\urladdr{www.mat.uniroma1.it/people/manetti/}

\date{20 february 2019}

\subjclass[2010]{18G55,14D15,16W50}
\keywords{Model categories, Deformation theory, Differential graded algebras}

\begin{abstract} In this paper  we study classical deformations of diagrams of commutative algebras over a field of characteristic 0. In particular we determine several homotopy classes of 
DG-Lie algebras, each one of them controlling this above deformation problem: the first homotopy type is described in terms of the projective
model structure on the category of diagrams of differential graded algebras, the others in terms of the Reedy model structure on truncated Bousfield-Kan approximations.\newline%
\phantom{aa} The first half of the paper contains  an elementary introduction to the projective model structure on the category of commutative differential graded algebras, while the second half 
is devoted to the main results.\end{abstract}

\maketitle


\section{Introduction}

Let $\K$ be a fixed field, $S$ a Noetherian commutative $\K$-algebra and  $X=\Spec(S)$ the associated affine scheme.
It is well known that every deformation of $X$, in the category of schemes over $\K$, is affine, hence the deformation theory of $X$ is the same of the deformation theory of $S$ inside the category $\Alg_{\K}$ of unitary commutative $\K$-algebras. 
Similarly, the deformation theory of a separated Noetherian scheme $X$ over $\K$ is the same as the deformation theory of a diagram in $\Alg_{\K}$. More precisely,  if $\sN$ is the nerve of an affine open cover $\{U_i\}$ of $X$, it is not difficult to prove 
that the deformations of $X$ (up to isomorphism) are the same as the deformations (up to isomorphism) of the diagram
\[ S_\pallino\colon \sN\to \Alg_{\K},\qquad S_{i_0,\ldots,i_n}=\Gamma(U_{i_0}\cap\cdots\cap U_{i_n},\Oh_X)\,,\] 
where $\sN$ is considered as a poset and as a small category in the obvious way, inside the category of diagrams
$\Fun(\sN,\Alg_{\K})=\{\sN\to \Alg_{\K}\}$. 

It is  well known (see e.g. \cite{H04,roma,MM}) that,  if $\K$ has characteristic 0, 
then every  (commutative) deformation of an algebra $S\in \Alg_{\K}$ is isomorphic to $H^0(R')$, 
where $R'$ is obtained by perturbing the differential of a fixed Tate resolution  $R\to S$ \cite{tate}.
This easily implies that the deformations of $S$ are controlled, in the sense of \cite{ManettiDGLA}, by the differential graded Lie algebra of derivations of $R$. A short introduction to differential graded algebras is given here in 
Section~\ref{sec.dgca}.

It is possible to prove that the above strategy generalises to arbitrary Noetherian separated schemes, where Tate resolution is replaced by the algebraic analogue of Palamodov's resolvent \cite{MM2,Pal}. This is possible because the nerve $\sN$  
of a covering is a direct Reedy category, i.e., there exists a degree function $\deg\colon \sN\to \N$ such that every non identity arrow increases degree. 

The aim of this paper is to study deformations of diagrams $\sD\to \Alg_{\K}$ for a general small category $\sD$. The first result is to extend the above strategy by detecting what is the correct notion of Tate resolution of a diagram (= the correct notion of Palamodov's resolvent for  a diagram). 
In doing this it is extremely convenient  to work in the framework of model structures, briefly recalled in Section~\ref{sec.model_intro}. 
 
The category $\Alg_{\K}$ can be considered in an obvious way as a full subcategory of 
$\CDGA_{\K}$ (resp.: $\CDGA_{\K}^{\le 0}$), 
the category of commutative differential graded algebras (resp.: in non-positive degrees). 

By a classical result of Bousfield and Gugenheim \cite{Bous} the category $\CDGA_{\K}$ admits a model structure where weak equivalences are the quasi-isomorphisms and fibrations are the surjective maps, cf. \cite{GM}. 

The category $\CDGA_{\K}^{\le 0}$ carries  a similar model structure, 
where weak equivalences are the quasi-isomorphisms and fibrations are the surjective maps in negative degrees. Due to the lack of appropriate references, 
in Section~\ref{sec.model_alg} we provide an elementary proof of this fact, based on the properties of free and semifree extensions.

Section~\ref{sec.derivations} is devoted to some technical lemmas that are probably well known to experts. In Section~\ref{sec.deformations} we prove the main result of this paper (Theorem~\ref{thm.main}), namely that the deformation theory of a diagram  
$S_\pallino\colon \sD\to \Alg_{\K}$ is controlled by the differential graded Lie algebra of derivations of a cofibrant replacement of $S_\pallino$ in the model category of diagrams 
$\Fun(\sD,\CDGA_{\K}^{\le 0})$, equipped with the projective model structure.

Unfortunately, for general index categories $\sD$, cofibrant replacements in the projective model structure are difficult to describe from the constructive point of view. For this reason, in the last 
sections we propose a different approach by describing a countable family of functors between small categories (Definition~\ref{def.approximation})
\[   \epsilon_k\colon N(\sD)_{\le k}\to \sD,\qquad k=2,3,\ldots,\infty,\]
such that for every $k$ in the above range:
\begin{enumerate}

\item every diagram $S_\pallino\colon \sD\to \Alg_{\K}$ has the same isomorphism classes of deformations as  $S_\pallino\circ \epsilon_k$;\smallskip

\item $N(\sD)_{\le k}$ is a Reedy category (see Section~\ref{sec.Reedy}) and 
the projective model structure  on the category $\Fun(N(\sD)_{\le k},\CDGA_{\K}^{\le 0})$  is the same as the Reedy model structure, hence with cofibrations described constructively in terms of latching objects and cofibrations in $\CDGA_{\K}^{\le 0}$.
\end{enumerate}

In our construction the functor $\epsilon_{\infty}$ is the forgetful functor from the simplex category of $\sD$ (see Section~\ref{sec.simplex}), and $\epsilon_{k}$ is its restriction to the full subcategory of $p$-simplexes, with $p\le k$. 
The composition  map 
\[ \Fun(\sD,\CDGA_{\K}^{\le 0})\xrightarrow{\;-\circ\epsilon_{\infty}\;}\Fun(N(\sD)_{\le \infty},\CDGA_{\K}^{\le 0})\]
is called Bousfield-Kan approximation and plays an important role in the homotopy theory of diagrams \cite{CS}.

Putting together all the above facts, the main result of this paper is:

\begin{theorem}[=Theorem~\ref{thm.main}+Corollary~\ref{cor.main}] 
 Let $\sD$ be a small category,  
$S_\pallino\colon \sD\to \mathbf{Alg}_{\K}$ a diagram of unitary commutative algebras.

\begin{enumerate} 
\item Let $R_\pallino\to S_\pallino$ be a cofibrant replacement in 
$\Fun(\sD,\CDGA_{\K}^{\le 0})$ with respect to the projective model structure. Then 
the DG-Lie algebra $L=\Der_{\K}^*(R_\pallino,R_\pallino)$ controls the deformations of 
$S_\pallino$.

\item For every $k=2,\ldots,\infty$, let $\epsilon_k\colon N(\sD)_{\le k}\to \sD$ be  the functor defined in \ref{def.approximation} and let  
$R_{\pallino k}\to S_\pallino\circ \epsilon_k $ be a Reedy cofibrant replacement in 
$\Fun(N(\sD)_{\le k},\CDGA_{\K}^{\le 0})$.  
Then 
the DG-Lie algebra $L_k=\Der_{\K}^*(R_{\pallino k},R_{\pallino k})$ controls the deformations of 
$S_\pallino$.
\end{enumerate}
\end{theorem}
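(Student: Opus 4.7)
The plan is to construct a natural isomorphism $\Def_L \cong \Def_{S_\pallino}$ of deformation functors on the category of Artinian local $\K$-algebras $(A,\mathfrak{m}_A)$ with residue field $\K$. In one direction, a Maurer-Cartan element $\xi\in L^1\otimes\mathfrak{m}_A$ produces the deformed diagram $(R_\pallino\otimes_\K A,\, d+\xi)$ in $\Fun(\sD,\CDGA_A^{\le 0})$, whose pointwise $H^0$ is a diagram of flat commutative $A$-algebras reducing to $S_\pallino$ modulo $\mathfrak{m}_A$. The flatness and the concentration in degree $0$ of this cohomology follow from the fact that each component $R_\pallino^i$ is a semifree CDGA resolving $S_\pallino^i$, a property guaranteed by projective cofibrancy of $R_\pallino$. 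Passage to gauge equivalence via the exponential of degree-$0$ derivations in $L\otimes\mathfrak{m}_A$ then translates into isomorphism of deformations, yielding a well-defined natural transformation $\Def_L\to\Def_{S_\pallino}$.

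The main technical obstacle is proving that this transformation is an isomorphism. I would argue by induction on the length of $A$, using a small extension $0\to(t)\to A'\to A\to 0$ with $\mathfrak{m}_{A'}\cdot t=0$. Given a deformation $S'_\pallino\in\Def_{S_\pallino}(A)$ already produced by some $\xi\in\MC(L\otimes\mathfrak{m}_A)$, the extensions of $S'_\pallino$ to $A'$ and the extensions of $\xi$ to Maurer-Cartan elements over $A'$ must be shown to be in natural bijection compatible with isomorphism and gauge equivalence. The key ingredient is the lifting property of the projective cofibration $0\to R_\pallino$ against surjective maps of diagrams: this allows one to construct, given any extension $S''_\pallino$ of $S'_\pallino$ over $A'$, a compatible extension of the CDGA diagram $(R_\pallino\otimes_\K A,\, d+\xi)$ to $A'$ whose perturbed differential determines the desired extension of $\xi$. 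The pointwise reduction of this lifting property to elementary free extensions is supplied by the results of Sections~\ref{sec.model_alg} and \ref{sec.derivations}, together with the classical Tate/Palamodov theory for single algebras recalled in the introduction, and the identification of the tangent-obstruction bicomplex of the deformation problem with $H^{1,2}(L)$.

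For part (2), the strategy has two components. First, I would verify that the pullback functor $-\circ\epsilon_k\colon\Fun(\sD,\Alg_\K)\to\Fun(N(\sD)_{\le k},\Alg_\K)$ induces a bijection on isomorphism classes of deformations for every $k\ge 2$: a functor out of $\sD$ is determined by its values on objects, its action on morphisms, and its compatibility with compositions, and the truncated nerve $N(\sD)_{\le 2}$ already captures all this combinatorial data, while flatness is a pointwise condition preserved by composition with $\epsilon_k$. Second, since $N(\sD)_{\le k}$ is a direct Reedy category, the projective and Reedy model structures on $\Fun(N(\sD)_{\le k},\CDGA_\K^{\le 0})$ coincide, so any Reedy cofibrant replacement $R_{\pallino k}$ of $S_\pallino\circ\epsilon_k$ is also projectively cofibrant. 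Applying part (1) to the diagram $S_\pallino\circ\epsilon_k$ over the indexing category $N(\sD)_{\le k}$ then yields that $L_k=\Der_\K^*(R_{\pallino k},R_{\pallino k})$ controls the deformations of $S_\pallino\circ\epsilon_k$, which coincide with those of $S_\pallino$ by the first step.
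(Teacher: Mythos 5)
Your part (1) follows the paper's strategy (Maurer--Cartan elements of $L$ as perturbed differentials on $R_\pallino\otimes_{\K}A$, induction on the length of $A$ through a small extension), but the step you single out as the key ingredient is stated incorrectly and, as written, fails. The cofibration $\K\to R_\pallino$ does \emph{not} have the left lifting property against arbitrary pointwise surjections of diagrams in $\Fun(\sD,\CDGA_{\K}^{\le 0})$ -- cofibrations lift only against \emph{trivial} fibrations, and the surjection $N_{\pallino A}\to N_{\pallino A}\otimes_A B$ arising in the inductive step is not a quasi-isomorphism. The paper's Lemma~\ref{lem.lift} is exactly the device needed here: one obtains a lifting only in the category of diagrams of \emph{graded} algebras, by tensoring the target square with the contractible algebra $\K[d^{-1}]$ (which turns the surjection into a trivial fibration) and then projecting back; the differential is restored in a second step by perturbation, using that $f_*\colon \Der^*_{\K}(P_\pallino,P_\pallino)\to \Der^*_{\K}(P_\pallino,N_{\pallino A}\otimes_A\K;f)$ is a surjective quasi-isomorphism (Lemma~\ref{lem.derivationdiagramproperties}). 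Without separating ``lift as graded algebras'' from ``correct the differential'' the induction does not close. Likewise, injectivity of $\Def_L\to\Def_{S_\pallino}$ (the unicity half of Lemma~\ref{lem.main}) rests on the acyclicity of $\ker f_*$, which your sketch only gestures at via ``tangent-obstruction''; a standard smoothness argument on $H^{1,2}(L)$ would not by itself give bijectivity on all of $\Art_{\K}$.

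In part (2) there is a more serious error: $N(\sD)_{\le k}$ is \emph{not} a direct Reedy category. It contains the degeneracy maps, e.g.\ $\sigma_0\colon [x\xrightarrow{\Id}x]\to[x]$, which strictly lower degree, so the inverse subcategory is nontrivial and matching categories are nonempty. Hence the coincidence of the Reedy and projective model structures is not automatic; it holds because every object of $\Fun(N(\sD)_{\le k},\cdgamz)$ is Reedy fibrant (Lemma~\ref{lem.reedyproj}), and proving that fibrancy is the genuine content of Section~\ref{sec.Reedy}: one must show that $X_\alpha\to M_\alpha X$ is surjective for every degenerate simplex $\alpha$, which requires the cosimplicial-group-style interpolation argument of Proposition~\ref{prop.cosimpl}. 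Your argument skips this entirely. A smaller omission in the same part: to identify deformations of $S_\pallino$ with those of $S_\pallino\circ\epsilon_k$ one must check that an arbitrary deformation of $\epsilon_k^*S_\pallino$ over $A$ automatically sends anchor maps to isomorphisms (so that it lies in $\Fun^{\anchor}$ and can be pushed back to $\sD$); this uses Lemma~\ref{lem.nakayama}(1) and is not a purely combinatorial statement about nerves.
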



\subsection*{Notation and setup}

Throughout this paper we will work over a fixed field $\K$ of characteristic 0. 
Unless otherwise specified, every (graded) vector space is assumed over $\K$ and the symbol $\otimes$ denotes the tensor product over $\K$. 
If $V=\oplus_{n\in\Z}V^n$ is a graded vector space, we denote by $\bar{a}$ the degree of a non-zero homogeneous element $a$: in other words $\bar{a}=n$ whenever $a\not=0$ and $a\in V^n$. 
It is implicitly assumed that if a mathematical formula contains the degree symbols 
$\bar{a},\bar{b},\ldots$ then all the elements $a,b,\ldots$ involved are homogeneous and different from $0$.
As usual, for every complex of vector spaces $V$, we shall denote by $Z^n(V), B^n(V)$ and $H^n(V)$ the space of $n$-cocycles, the space of $n$-coboundaries and the $n$th cohomology group, respectively.
We denote by $\Set$ the category of sets, by $\mathbf{Grp}$ the category of groups, 
by $\Alg_{\K}$ the category of unitary commutative $\K$-algebras and by   
$\Art_{\K}\subset \Alg_{\K}$ the full subcategory of local Artin algebras with residue field $\K$.
Finally, in order to avoid an excessive length we assume that the reader has a basic knowledge of differential graded Lie algebras and of the associated deformation functors: for instance, the papers 
\cite{ManettiDGLA,ManettiSeattle} contain everything needed for the comprehension of this paper.

\bigskip
\section{Commutative Differential Graded  Algebras}
\label{sec.dgca}

In the first four sections of this paper we shall give a short survey, addressed to a wide mathematical audience, of some homotopical algebra that we use in the second part of the paper. 
We begin by recalling the definition and the first properties of unitary commutative differential graded algebras (DG-algebras for short) over $\K$.

\begin{definition}\label{def.dgalgebras}
A unitary commutative graded algebra is a graded vector space 
$A=\bigoplus_{n \in \mathbb{Z}}A^n$ with a product $A^i \times A^j \longrightarrow A^{i+j}$ which is $\mathbb{K}$-linear, associative and graded commutative, i.e., such that 
$ab=(-1)^{\bar{a}\,\bar{b}}ba$ for every $a,b\in A$.
Moreover there exists a unit  $1 \in A^0$ such that $1a=a1=a$ for every $a\in A$.
\end{definition}

A morphism of unitary commutative graded  algebras is a morphism of  graded vector spaces 
that commutes with products and preserves the units. We denote by $\mathbf{CGA}_{\K}$ the category of 
unitary commutative graded algebras. In the above definition it is allowed that $1=0$, and this happens if and only if $A=0$.

The usual construction of polynomials extends without difficulties to the graded case.  Given a unitary commutative graded algebra $A$ and 
a set $\{x_i\}$, $i\in I$, of indeterminates, each one equipped with a degree $\overline{x_i} \in \mathbb{Z}$,  
the polynomial algebra $A[\{x_i\}]$ is defined as the graded vector space generated by the monomials in  $x_i$ with coefficients in $A$, subject to the relations $x_ix_j= (-1)^{\bar{x_i}\,\bar{x_j}}x_jx_i$ and $ax_i=(-1)^{\bar{x_i}\,\bar{a}}x_ia$, $a\in A$.
For instance, if $\bar{x}=0$ and $\bar{y}=1$, then $xy=yx$, $y^2=0$ and therefore $\K[x,y]=\K[x]\oplus \K[x]y$.

Given $A\in  \mathbf{CGA}_{\K}$, a derivation of degree $k$ of $A$ is a linear map 
$\alpha\colon A\to A$ such that $\alpha(A^n)\subset A^{n+k}$ for every $n$, satisfying the  
\emph{(graded) Leibniz identity}: 
\[\alpha(ab)=\alpha(a)b+(-1)^{k\overline{a}}a\alpha(b)\,.\]  
The vector space of  derivations of degree $k$ is denoted $\Der_{\K}^k(A,A)$.
  
If $A=\K[\{x_i\}]$, by the Leibniz identity every derivation  $\alpha\in \Der_{\K}^k(A,A)$ is uniquely defined by the values $\alpha(x_i)\in A^{\bar{x_i}+k}$.

  \begin{definition}
  A commutative differential graded  algebra (DG-algebra for short) 
  is a graded commutative algebra $A$ equipped with a 
  derivation $d\in \Der_{\K}^1(A,A)$, called \emph{differential},  such that $d^2=0$. In other words:
  \begin{enumerate}
  \item $d(A^n) \subseteq A^{n+1}$,
  \item $d^2=0$,
  \item (Graded Leibniz identity) $d(ab)=d(a)b+(-1)^{\overline{a}}ad(b)$.
  \end{enumerate}
A morphism of commutative differential graded algebras is a morphism of commutative graded algebras that commutes with differentials. 
 \end{definition}
 
We denote by $\mathbf{CDGA}_{\K}$ the category of 
commutative differential graded algebras. Notice that $d(1)=0$, and that $\K$ and $0$ are 
respectively the initial and the final object in the category $\mathbf{CDGA}_{\K}$. It is easy to see 
that this category is complete and cocomplete.

\begin{definition}[Free extensions]\label{def.free}
Let $A \in \mathbf{CDGA}_{\K}$ and $x_i$, $i \in I$, a set of indeterminates of degree $\overline{x_i} \in \mathbb{Z}$. Consider a parallel set of indeterminates $dx_i$, with 
$\overline{dx_i}=\overline{x_i}+1$ and the 
polynomial extension  $A\to A[\{x_i,dx_i\}]$. 
The differential $d$ on $A$  can be extended to a differential on $A[\{x_i,dx_i\}]$ by setting $d(x_i)=dx_i$ and $d(dx_i)=0$. 
\end{definition}

The name free extension is motivated by the following property: for every morphism $f\colon A\to B$ in 
$\mathbf{CDGA}_{\K}$ and every subset $\{b_i\}\subset B$ with $b_i\in B^{\bar{x_i}}$ for every $i$, there exists a unique morphism of DG-algebras $g\colon A[\{x_i,dx_i\}]\to B$ extending $f$ and such that $g(x_i)=b_i$ for every $i$. Clearly $g(dx_i)=d(b_i)$.

\begin{lemma}\label{lem.freequasiiso}
Every free extension of DG-algebras is a quasi-isomorphism, i.e., the inclusion 
$A\to  A[\{x_i,dx_i\}]$ induces an isomorphism in cohomology.
\end{lemma}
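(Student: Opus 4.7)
The plan is to reduce to the case of a single pair $(x,dx)$ and then combine a Künneth-type argument with an explicit computation.

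First, I would express $A[\{x_i,dx_i\}]$ as the filtered colimit of its sub-DG-algebras $A[\{x_i,dx_i\}_{i\in F}]$ indexed by finite subsets $F\subset I$. Since cohomology of complexes of vector spaces commutes with filtered colimits, it suffices to prove the statement when $I$ is finite, and then by induction on $|I|$ (writing $A[\{x_1,dx_1,\ldots,x_n,dx_n\}] = A[\{x_1,dx_1,\ldots,x_{n-1},dx_{n-1}\}][x_n,dx_n]$) we reduce to the case of a single pair $I=\{\ast\}$.

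For a single pair I would observe that there is a canonical isomorphism of DG-algebras
\[
A[x,dx]\;\cong\; A\otimes_{\K}\K[x,dx]
\]
where the right-hand side carries the tensor-product differential. By the Künneth formula over the field $\K$,
\[
H^{*}(A\otimes_{\K}\K[x,dx])\;=\;H^{*}(A)\otimes_{\K} H^{*}(\K[x,dx]),
\]
so the claim reduces to showing that $H^{*}(\K[x,dx])=\K$, concentrated in degree $0$.

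The computation of $H^{*}(\K[x,dx])$ splits according to the parity of $n=\bar{x}$. If $n$ is even, then $(dx)^{2}=0$ and $\K[x,dx]=\K[x]\oplus\K[x]\,dx$ with $d(x^{k})=k\,x^{k-1}dx$; since $\car(\K)=0$, every integer $k\ge 1$ is invertible and the maps $\K\cdot x^{k}\to \K\cdot x^{k-1}dx$ are isomorphisms, so only $H^{0}=\K\cdot 1$ survives. If $n$ is odd, then $x^{2}=0$ and $\K[x,dx]=\K[dx]\oplus\K[dx]\cdot x$; a direct check using the Leibniz rule gives $d((dx)^{k}x)=(dx)^{k+1}$, which again kills everything except $H^{0}=\K\cdot 1$. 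The characteristic-zero hypothesis is used, and is essential, only in the first of these two cases; the rest of the argument (Künneth, induction on the number of generators, passage to the filtered colimit) is formal. This is the only step I expect to require any care, and it is the one step that truly relies on the standing assumption $\car(\K)=0$.
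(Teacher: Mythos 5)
Your proof is correct, but the single-variable step is handled by a genuinely different device than the one in the paper. The paper, after the same reduction to one pair $(x,dx)$ (finitely many variables plus induction), works directly with the quotient complex $A[x,dx]/A$ and exhibits, by the same parity case analysis you use, an explicit primitive for every cocycle in the quotient, concluding that $A[x,dx]/A$ is acyclic. You instead factor the coefficients out: the identification $A[x,dx]\cong A\otimes_{\K}\K[x,dx]$ as DG-algebras (valid precisely because the extension is free, so the differential really is the tensor-product differential) plus the K\"unneth formula over a field reduces everything to the computation $H^{*}(\K[x,dx])=\K$, which you carry out correctly, including the observation that $\car(\K)=0$ enters only through the invertibility of $k$ in $d(x^{k})=k\,x^{k-1}dx$ when $\bar{x}$ is even. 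The two arguments are essentially equivalent in content --- the paper's explicit primitives $\sum x^{i+1}b_i/(i+1)$ and $\sum x(dx)^i a_i$ are exactly what your computation of $H^{*}(\K[x,dx])$ produces after tensoring back with $A$ --- but yours is slightly less self-contained (it invokes K\"unneth and the commutation of cohomology with filtered colimits, both standard for complexes of vector spaces), while in exchange it isolates more cleanly where the characteristic-zero hypothesis is used and makes transparent that the only nontrivial input is the contractibility of $\K[x,dx]/\K$.
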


\begin{proof}
Since every element of $A[\{x_i,dx_i\}]$  is a polynomial in a finite number of indeterminates, we can assume the set of variables  finite, say $x_1,\ldots,x_n$,  and proceed by induction on $n$.
Therefore it is sufficient to show that the inclusion $A\to A[x,dx]$ is a quasi-isomorphism.

If $x$ has even degree, then $(dx)^2=0$ and every homogeneous element of the quotient $A[x,dx]/A$ is of type
\[ v=\sum_{i=0}^n x^{i+1}a_i+x^i dx\, b_i,\qquad a_i,b_i\in  A\,.\]
If $dv=0$ then $d(b_i)=(i+1)a_i$ and therefore (notice the assumption $\car(\mathbb{K})=0$)
\[ v=d\left(\sum_{i=0}^n \frac{x^{i+1}}{i+1}\,b_i\right)\,.\]

If $x$ has odd degree, then $x^2=0$ and every homogeneous element of the quotient $A[x,dx]/A$ is of type
\[ u=\sum_{i=0}^n (dx)^{i+1}\, a_i+x (dx)^i b_i,\qquad a_i,b_i\in  A\,.\]
If $du=0$ then  $da_i+b_i=0$  for every $i$, and we can write
\[ u=d\left(\sum_{i=0}^m x(dx)^i a_i\right)\,.\]
Thus we have proved that $A[x,dx]/A$ is an acyclic complex of vector spaces.
\end{proof}

\begin{definition} Let $f\colon A\to B$ be a morphism in  $\CDGA_{\K}^{\le 0}$, with $A=A^0\in \Alg_{\K}$. We shall say that $f$ is \emph{flat}, or that $B$ is a flat $A$-algebra if $B$ is a complex of 
flat $A$-modules.
\end{definition}

Clearly the above definition  extends the usual notion of flat morphism of algebras. It is worth pointing out that there also exists a good notion of flatness for every morphism in  $\CDGA_{\K}^{\le 0}$ \cite{MM}.

For every $A\in \CDGA_{\K}^{\le 0}$ we shall denote by $\CDGA_{A}^{\le 0}$ the undercategory of maps $A\to B$: the morphisms in $\CDGA_{A}^{\le 0}$ are the commutative triangles.  The following  lemma is completely standard, see e.g. \cite[Lemma A.4 and Theorem A.10]{Sernesi}.

\begin{lemma}\label{lem.nakayama} 
Let $A\in \Art_{\K}$ and let $f\colon B\to C$ be a morphism in 
$\CDGA_{A}^{\le 0}$: 
\begin{enumerate}

\item if  $C$ is flat over $A$ and  the induced map 
$B\otimes_A\K\to C\otimes_A\K$ is an isomorphism, then $f$ is also an isomorphism;

\item if  $B,C$ are flat $A$-algebras and  the induced map 
$B\otimes_A\K\to C\otimes_A\K$ is a quasi-isomorphism, then $f$ is also a quasi-isomorphism;

\item if $B$ is flat over $A$ and $H^i(B\otimes_A\K)=0$ for every $i<0$, then 
$H^0(B)$ is a flat $A$-algebra and the natural map $H^0(B)\to H^0(B\otimes_A\K)$ induces an 
isomorphism $H^0(B)\otimes_A\K=H^0(B\otimes_A\K)$. 
\end{enumerate} 
\end{lemma}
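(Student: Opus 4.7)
The plan is to prove all three statements by induction on the length $\ell(A) = \dim_\K A$ of the Artinian local ring, with the trivial base case $A = \K$. For the inductive step I will use a \emph{small extension} $0 \to J \to A \to A' \to 0$ in which $J$ is a nonzero principal ideal of $A$ annihilated by $\mathfrak{m}_A$, so that $J \cong \K$ as an $A$-module; such a $J$ always exists, obtained as a one-dimensional $\K$-subspace of the socle. The key facts I will use repeatedly are that for any $A$-module $M$ the multiplication map $J \otimes_A M \to M$ has image $JM$, that $J \otimes_A M$ is naturally identified with $M \otimes_A \K$ (since $\mathfrak{m}_A$ annihilates $J$), and that when $M$ is flat the sequence $0 \to J \otimes_A M \to M \to M \otimes_A A' \to 0$ is exact.

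For (1), the inductive hypothesis applied to $f \otimes_A A' \colon B \otimes_A A' \to C \otimes_A A'$ (a morphism in $\CDGA_{A'}^{\le 0}$ whose target is flat over $A'$ and whose reduction modulo the residue field is $f \otimes_A \K$) shows that $f \otimes_A A'$ is an isomorphism. The snake lemma applied to the pair of short exact sequences $0 \to JB \to B \to B \otimes_A A' \to 0$ and $0 \to JC \to C \to C \otimes_A A' \to 0$ (the latter exact on the left by flatness of $C$) then reduces the problem to showing that the left vertical arrow $JB \to JC$ induced by $f$ is an isomorphism. Pre-composing with the surjection $J \otimes_A B \twoheadrightarrow JB$, the resulting composite equals $J \otimes_A B \xrightarrow{\mathrm{id}_J \otimes f} J \otimes_A C \xrightarrow{\sim} JC$, in which the first factor is an isomorphism by hypothesis on $f \otimes_A \K$ and the second is an isomorphism by flatness of $C$; this forces the surjection $J \otimes_A B \twoheadrightarrow JB$ to be injective as well, yielding the desired isomorphism $JB \cong JC$.

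For (2), I will pass to the mapping cone $\mathrm{Cone}(f)$, whose components $B^{n+1} \oplus C^n$ are flat $A$-modules, reducing the claim to the statement that every complex $M$ of flat $A$-modules with $M \otimes_A \K$ acyclic is itself acyclic. This in turn follows by induction on $\ell(A)$ from the long exact cohomology sequence attached to $0 \to M \otimes_A \K \to M \to M \otimes_A A' \to 0$, since $M \otimes_A \K$ is acyclic by hypothesis and $M \otimes_A A'$ is acyclic by induction. For (3), the same long exact sequence for $B$, combined with the inductive hypothesis applied to the flat $A'$-complex $B \otimes_A A'$ (whose reduction modulo $\mathfrak{m}_{A'}$ is $B \otimes_A \K$), forces $H^i(B) = 0$ for every $i < 0$. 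Hence $B$ is a flat $A$-resolution of $H^0(B)$, so $\mathrm{Tor}^A_i(H^0(B), \K) = H^{-i}(B \otimes_A \K)$; the hypothesis kills $\mathrm{Tor}^A_1$, and a standard composition-series argument (every finitely generated $A$-module has a composition series with factors $\cong \K$) upgrades this vanishing to flatness of $H^0(B)$ over the Artinian local ring $A$, while the case $i=0$ is precisely the asserted natural isomorphism.

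The principal technical delicacy will be the injectivity step in (1): because $B$ is not assumed to be flat, one must carefully distinguish $J \otimes_A B$ from $JB$, and it is only the combination of flatness of $C$ with the hypothesis on $f \otimes_A \K$ that forces the multiplication map $J \otimes_A B \to JB$ to be an isomorphism. The remaining arguments are routine once the small-extension induction framework and the identification $J \otimes_A M = M \otimes_A \K$ are in place.
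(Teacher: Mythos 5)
Your proof is correct. Note that the paper itself does not prove this lemma: it declares it ``completely standard'' and cites \cite[Lemma A.4 and Theorem A.10]{Sernesi}, so there is no in-text argument to compare against. Your self-contained treatment is essentially the standard one that underlies those references: induction on the length of $A$ along a small extension $0\to J\to A\to A'\to 0$ with $J\simeq\K$, the identification $J\otimes_A M\simeq M\otimes_A\K$, the five/snake lemma for (1), passage to the mapping cone and the acyclicity of flat complexes with acyclic reduction for (2), and the computation $\operatorname{Tor}^A_i(H^0(B),\K)=H^{-i}(B\otimes_A\K)$ from the flat resolution $\cdots\to B^{-1}\to B^0\to H^0(B)\to 0$ together with dévissage along a composition series for (3). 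All the delicate points are handled properly: in (1) you correctly avoid assuming $J\otimes_A B\to JB$ is injective and instead deduce it from the bijectivity of the composite $J\otimes_A B\to J\otimes_A C\xrightarrow{\sim} JC$; in (3) the local criterion of flatness is applied without finiteness hypotheses on $H^0(B)$, which is legitimate since $\operatorname{Tor}_1^A(-,N)=0$ for all finitely generated $N$ follows from the case $N=\K$ by dévissage. The only cosmetic remark is that in (1) the sequence $0\to JC\to C\to C\otimes_A A'\to 0$ is exact for trivial reasons; flatness of $C$ is needed only for the isomorphism $J\otimes_A C\simeq JC$, which is exactly where you use it.
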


\bigskip
\section{A very short introduction to  model structures}
\label{sec.model_intro}

We briefly recall the definition of model category and some few basic results about them; the reader may consult \cite{Hov,Hir} for a deeper and more complete exposition of the subject. Throughout this section 
$\bM$ will denote a fixed category.

\begin{definition}[Lifting properties] Consider two morphisms $i\colon A\to B$, $f\colon C\to D$ in $\bM$.
If for every solid commutative diagram 
\begin{center}
  \begin{tikzcd}
   A \arrow[r] \arrow[d, "i"'] & C \arrow[d, "f"] \\
   B \arrow[r] \arrow[ru, dotted] & D
  \end{tikzcd}
 \end{center}  
 there exists the dotted arrow that makes both triangles commute, we shall say that the map $i$ has the left lifting property with respect to $f$, and the map $f$ has the right lifting property with respect to $i$.
\end{definition}

For instance, in the category of sets, every injective map $i$ has the left lifting property with respect to any 
surjective map $f$. The same holds in the category of vector spaces.

\begin{definition}[Retracts]
A morphism  $f$ in $\bM$  is called a retract of a morphism $g$ in $\bM$ if there exists a commutative diagram:
\begin{center}
  \begin{tikzcd}
  A \arrow[r] \arrow[d, "f"] \arrow[rr, bend left, "\Id_A"] & B  \arrow[r] \arrow[d, "g"] & A \arrow[d, "f"] \\
  C  \arrow[r] \arrow[rr, bend right, "\Id_C"']& D  \arrow[r] & C 
  \end{tikzcd}
\end{center}
\end{definition}

\begin{definition}\label{mod}
A model structure on $\bM$  is the data of three classes of maps: weak equivalences, 
fibrations and cofibrations, which satisfy the following axioms:
 
 \begin{enumerate}[(M1)]
  \item (2-out-of-3) If $f$ and $g$ are morphisms in $\bM$ such that the composition $gf$ is defined, and two out of the three $f$, $g$ and $gf$ are weak equivalences, so is the third.
  \item (Retracts) If $f$ and $g$ are maps in $\bM$ such that $f$ is a retract of $g$, and $g$ is a weak equivalence, a cofibration or a fibration, then so is $f$.
  \item (Lifting) A trivial fibration is map which is both a fibration and a weak equivalence; a trivial cofibration is map which is both a cofibration and a weak equivalence.
  \begin{enumerate}
	 \item Trivial fibrations have the right lifting property with respect to cofibrations.\vspace{1pt}
	 \item Trivial cofibrations have the left lifting property with respect to fibrations.
	\end{enumerate}
 \item(Factorisation) Every morphism  $g$ in $\bM$ admits  two  factorisations: 
	\begin{description}
     \item[(CW,\,F):] ~$g=qj$, where $j$ is a trivial cofibration and  $q$ is a fibration,\vspace{1pt}
	 \item[(C,\,FW):] ~$g=pi$, where $i$ is a cofibration and $p$ is a trivial fibration.
	\end{description}
\end{enumerate}
 \end{definition}
 
 \begin{definition}
 A model category is a complete and cocomplete category equipped with a model structure. 
 \end{definition}

In particular every model category has an initial object $\amalg_{\emptyset}$ and a final object ${\scriptstyle\prod}_{\emptyset}$; an object $X$ is called cofibrant in the morphism $\amalg_{\emptyset}\to X$ is a cofibration; it is called fibrant if the morphism $X\to {\scriptstyle\prod}_{\emptyset}$ is a fibration.  A \emph{cofibrant replacement} of an object $Y$ is a trivial fibration $X\to Y$ with $X$ cofibrant. The factorisation axiom guarantees that cofibrant replacements always exist.

For notational simplicity we shall denote by $\sW,\sF$ and $\sC$ the classes of weak equivalences, fibrations and cofibrations, respectively. We shall denote by $\sF\sW=\sF\cap \sW$ the class of trivial fibrations and by  $\sC\sW=\sC\cap \sW$ the class of trivial cofibrations.

\begin{lemma}\label{ret-sol}
 If $j$ has the left (right) lifting property with respect to $f$, and $i$ is a retract of $j$, then $i$ has the left (right) lifting property with respect to $f$. 
\end{lemma}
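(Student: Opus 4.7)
The statement is the standard fact that lifting properties are stable under retracts, and the proof is a straightforward diagram chase. The plan is to write out the retract diagram explicitly, use the lifting property of $j$ to extract a lift for a composed square, and then precompose/postcompose with the retract maps to obtain a lift for the original square. I will carry out only the left lifting property case since the right one is dual.

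First I would fix notation. By definition of retract, if $i\colon A\to B$ is a retract of $j\colon A'\to B'$, there exist morphisms $s\colon A\to A'$, $r\colon A'\to A$, $s'\colon B\to B'$, $r'\colon B'\to B$ such that $rs=\Id_A$, $r's'=\Id_B$, $s'i=js$ and $ir=r'j$. These are exactly the compatibilities recorded in the retract diagram of Definition 2.2.

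Next, given a solid commutative square
\begin{center}
\begin{tikzcd}
A \arrow[r, "u"] \arrow[d, "i"'] & C \arrow[d, "f"] \\
B \arrow[r, "v"'] & D
\end{tikzcd}
\end{center}
I would build the auxiliary square with $j$ on the left, $f$ on the right, top map $ur\colon A'\to C$ and bottom map $vr'\colon B'\to D$. Commutativity of this auxiliary square follows from $fur = viir = vir'j$… more cleanly: $f\circ(ur)=(fu)r=(vi)r=v(ir)=v(r'j)=(vr')\circ j$. By hypothesis $j$ has the left lifting property with respect to $f$, so there is a diagonal $h\colon B'\to C$ satisfying $hj=ur$ and $fh=vr'$.

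Finally I would set $\tilde h := hs'\colon B\to C$ and verify the two triangles. For the upper one, $\tilde h\, i = h s' i = h j s = u r s = u$. For the lower one, $f\tilde h = f h s' = v r' s' = v$. So $\tilde h$ is a diagonal filler for the original square, showing $i$ has the left lifting property with respect to $f$. The argument for the right lifting property is obtained by reversing all arrows. I do not expect any real obstacle: the only point requiring a moment of care is producing the correct composed square and checking its commutativity from the four retract identities.
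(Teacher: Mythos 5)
Your proof is correct and is exactly the standard diagram chase that the paper delegates to Hirschhorn [7.2.8]: the retract identities $s'i=js$, $ir=r'j$, $rs=\Id_A$, $r's'=\Id_B$ are stated correctly, the auxiliary square commutes as you compute, and the filler $\tilde h = hs'$ satisfies both triangle identities. Nothing to add.
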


For a proof, see \cite[7.2.8]{Hir}.

\begin{proposition}[Retract Argument]\label{retractargument}
Let $g$ be a map which can be factored as $g=pi$
\begin{enumerate}
 \item If $g$ has the left lifting property with respect to $p$
then $g$ is a retract of $i$.
 \item If $g$ has the right lifting property with respect to $i$
 then $g$ is a retract of $p$.
\end{enumerate}
\end{proposition}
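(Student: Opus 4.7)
The plan is to construct, in each case, an explicit retract diagram whose key ingredient is a lift produced by the stated lifting property. Write $i\colon A\to B$ and $p\colon B\to C$, so that $g=pi\colon A\to C$.

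For part (1), I would first observe that the square
\[
\begin{tikzcd}
A \arrow[r,"i"] \arrow[d,"g"'] & B \arrow[d,"p"] \\
C \arrow[r,"\Id_C"'] & C
\end{tikzcd}
\]
commutes because $pi=g=\Id_C\circ g$. Applying the hypothesis that $g$ has the left lifting property with respect to $p$ yields a morphism $h\colon C\to B$ with $hg=i$ and $ph=\Id_C$. Then the diagram
\[
\begin{tikzcd}
A \arrow[r,"\Id_A"] \arrow[d,"g"'] \arrow[rr,bend left,"\Id_A"] & A \arrow[r,"i"] \arrow[d,"i"] & A \arrow[d,"g"] \\
C \arrow[r,"h"'] \arrow[rr,bend right,"\Id_C"'] & B \arrow[r,"p"'] & C
\end{tikzcd}
\]
commutes (the middle-left square uses $hg=i$ and the middle-right uses $pi=g$, and the bottom composition is $ph=\Id_C$), exhibiting $g$ as a retract of $i$.

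For part (2), I would exchange the roles and start from the commutative square
\[
\begin{tikzcd}
A \arrow[r,"\Id_A"] \arrow[d,"i"'] & A \arrow[d,"g"] \\
B \arrow[r,"p"'] & C
\end{tikzcd}
\]
which commutes because $g\cdot\Id_A=g=pi$. The right lifting property of $g$ with respect to $i$ furnishes a morphism $h\colon B\to A$ with $hi=\Id_A$ and $gh=p$. Then
\[
\begin{tikzcd}
A \arrow[r,"i"] \arrow[d,"g"'] \arrow[rr,bend left,"\Id_A"] & B \arrow[r,"h"] \arrow[d,"p"] & A \arrow[d,"g"] \\
C \arrow[r,"\Id_C"'] \arrow[rr,bend right,"\Id_C"'] & C \arrow[r,"\Id_C"'] & C
\end{tikzcd}
\]
commutes (the right square uses $gh=p$, equivalently $\Id_C\circ p=gh$), and displays $g$ as a retract of $p$.

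There is essentially no obstacle beyond bookkeeping: the entire argument is a matter of spotting the right square on which to invoke the lifting property and then writing down the two resulting triangular identities as the two squares of the retract diagram. I would take care only to orient the squares consistently with the paper's convention for left and right lifting (the lifting arrow points from the target of the left-hand map to the source of the right-hand map), so that the produced $h$ has the correct domain and codomain in each case.
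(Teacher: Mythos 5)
Your argument is correct and is precisely the standard retract argument that the paper cites from Hovey: in each case you identify the right commutative square, invoke the lifting property to get $h$ with the two triangular identities ($hg=i$, $ph=\Id_C$ in the first case; $hi=\Id_A$, $gh=p$ in the second), and these identities are exactly the commutativity of the two squares of the retract diagram. The only blemish is a typo in your first retract diagram, where the top-right horizontal arrow from $A$ to $A$ should be labelled $\Id_A$ rather than $i$ (as your own justification \lq\lq the middle-right uses $pi=g$\rq\rq\ already indicates).
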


For a proof, see \cite[1.1.9]{Hov}.

\begin{lemma}\label{lem.liftingdefinition} 
Let $\bM$ be a model category:
\begin{enumerate}
\item A map in $\bM$ that has the left lifting property with respect to all trivial fibrations is a cofibration.
\item A map in $\bM$ that has the right lifting property with respect to all trivial cofibrations is a fibration.
\end{enumerate}
\end{lemma}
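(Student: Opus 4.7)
The plan is to prove each statement by combining the factorisation axiom (M4) with the retract argument (Proposition~\ref{retractargument}) and the closure of cofibrations/fibrations under retracts (M2).

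For part (1), given a map $f$ that has the left lifting property with respect to every trivial fibration, I would first apply the (C, FW) factorisation to write $f=pi$ with $i\in\sC$ and $p\in\sF\sW$. Since $f$ lifts against all trivial fibrations, it lifts in particular against $p$. By the first clause of the retract argument, this forces $f$ to be a retract of $i$. The retract axiom (M2) then upgrades this to $f\in\sC$.

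For part (2), the argument is formally dual: given $f$ with the right lifting property against every trivial cofibration, I would apply the (CW, F) factorisation to get $f=qj$ with $j\in\sC\sW$ and $q\in\sF$. The RLP of $f$ against $j$ combined with the second clause of the retract argument shows $f$ is a retract of $q$, and closure of fibrations under retracts (M2) concludes that $f\in\sF$.

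No step is really an obstacle here: once the factorisation and retract-argument machinery are in place, everything is essentially formal. The only thing to be careful about is correctly matching the direction of the lifting property with the correct factorisation — one must factor through the ``opposite'' class from the one being concluded, so that the retract argument produces a retract of the map in the desired class. The genuinely nontrivial content, that cofibrations and fibrations are respectively closed under retracts, is already built into axiom (M2), and the existence of the required factorisations is axiom (M4).
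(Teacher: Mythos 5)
Your proof is correct and is exactly the standard argument the paper delegates to \cite[1.1.10]{Hov}: factor via (M4) into the opposite class, apply the retract argument (Proposition~\ref{retractargument}), and conclude by closure under retracts (M2). Both parts are handled with the correct pairing of factorisation and lifting direction, so there is nothing to add.
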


For a proof, see \cite[1.1.10]{Hov}.

\begin{remark}
It follows from the previous lemma that isomorphisms belong to all three classes of maps. Furthermore, two of the three classes $\sW, \sC, \sF$ determine the third. Pay attention to the fact that, for example, if
$\sW$ and $\sF$ are two classes satisfying (M1) and (M2), in general they do not extend to a model structure.  
\end{remark}

The following  lemma is clear.

\begin{lemma}
If $h=gf$ and both $f,g$ have the left (right) lifting property with respect to $p$, then $h$ has the left (right) lifting property with respect to $p$.
\end{lemma}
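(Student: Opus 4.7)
The plan is a direct two-step diagram chase; there is no real obstacle here, only bookkeeping. I will only write the left lifting case in detail, since the right lifting case is formally dual (reverse every arrow, swap the roles of ``top'' and ``bottom'' squares).

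First I set up the data. Suppose $f\colon A\to B$ and $g\colon B\to C$ both have the left lifting property with respect to $p\colon X\to Y$, and suppose we are given a commutative square
\[
\begin{tikzcd}
A \arrow[r,"u"] \arrow[d,"h"'] & X \arrow[d,"p"] \\
C \arrow[r,"v"'] & Y
\end{tikzcd}
\]
with $h=gf$. The goal is to produce a morphism $C\to X$ making both triangles commute.

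The first step is to rewrite the outer square as a square against $f$. Since $pu = vh = v g f = (vg) f$, the diagram
\[
\begin{tikzcd}
A \arrow[r,"u"] \arrow[d,"f"'] & X \arrow[d,"p"] \\
B \arrow[r,"vg"'] & Y
\end{tikzcd}
\]
commutes, and the left lifting property of $f$ against $p$ yields a morphism $\ell\colon B\to X$ with $\ell f = u$ and $p\ell = vg$. The second step is to feed $\ell$ into a square against $g$: the commutativity $p\ell = vg$ gives
\[
\begin{tikzcd}
B \arrow[r,"\ell"] \arrow[d,"g"'] & X \arrow[d,"p"] \\
C \arrow[r,"v"'] & Y
\end{tikzcd}
\]
and the left lifting property of $g$ against $p$ produces $m\colon C\to X$ with $mg = \ell$ and $pm = v$.

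Finally I check that $m$ is the required lift for the original square: the lower triangle is $pm = v$ by construction, and the upper triangle is $mh = m g f = \ell f = u$. This concludes the left-lifting case. For the right lifting property the same argument runs dually: given a square with right vertical $p$ and bottom map factoring as $gf$, one lifts first against $g$ (using the right lifting property of $g$) and then against $f$, and the resulting morphism fills the original square.
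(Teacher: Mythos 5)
Your proof is correct: the two-step chase (lift against $f$ to get $\ell$, then against $g$ to get $m$, and verify $mh=mgf=\ell f=u$, $pm=v$) is exactly the standard argument that the paper leaves implicit when it declares the lemma ``clear.'' Nothing to add.
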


\begin{remark}
The previous lemmas show that the three classes of fibrations, cofibrations and weak equivalences are closed by composition.
\end{remark}

Model categories were introduced by Quillen \cite{QuillenHA} under the name of (complete and cocomplete) closed model categories. Nowadays many authors (e.g. \cite{Hir,Hov}) assume that the (C,FW) and (CW,F) factorisations are functorial. Since in algebraic geometry it is often important to resolve minimally algebraic structures, we prefer here to adopt the original Quillen's assumption, and require only the existence of factorisations.

\subsection{Pre-model structures} In several concrete cases, a convenient way to describe model structures is in terms of \emph{pre-model structures}.
Since two out of the three classes $\sW, \sC, \sF $ determine the third, it is typical to try to construct a model structure by establishing two out of the three classes and seeing whether they extend to a model structure. Often it happens that two classes have an easy description and the third is more complicated, but it has a nice subclass sufficiently large to ensure axiom (M4). The notion of left pre-model structure applies  when one has fixed the weak equivalences, fibrations and two more classes,  as in the next definition.

\begin{definition}\label{def.leftpremodel}
A \emph{left pre-model structure} on $\bM$ is the data of four classes of maps: $\mathcal{W},\mathcal{F}, \mathcal{C'},\mathcal{CW'}$ such that:  
\begin{enumerate}
    \item (2-out-of-3) The maps in $\mathcal{W}$ satisfy the 2-out-of-3 property;
    \item (Retracts) The classes $\mathcal{W}$ and $\mathcal{F}$ are closed under retracts;
    \item $\mathcal{CW'} \subseteq \mathcal{W}$;
    \item (Lifting) The maps in $\mathcal{C'}$ have the left lifting property with respect to the maps in $\mathcal{F}\cap \mathcal{W}$; the maps in $\mathcal{CW'}$ have the left lifting property with respect to the maps in $\mathcal{F}$;
    \item (Factorisation) Every map $g$ in $\bM$ has two  factorisations: 
    \begin{enumerate}
        \item ~$g=qj$, where $j$ is in $\mathcal{C'}$ and  $q$ is in $\mathcal{F}\cap \mathcal{W}$,
	     \item ~$g=pi$, where $i$ is in $\mathcal{CW'}$ and $p$ is in $\mathcal{F}$.
	 \end{enumerate}
\end{enumerate}
\end{definition}

\begin{theorem}\label{thm.premodel}
Given a left pre-model structure $\mathcal{W},\mathcal{F}, \mathcal{C'},\mathcal{CW'} $  there exists a unique model structure  where the weak equivalences are the maps in $\mathcal{W}$ and the fibrations are the maps in $\mathcal{F}$. Notably, the cofibrations are the retracts of $\mathcal{C'}$, and the trivial cofibrations are the retracts of $\mathcal{CW'}$.
\end{theorem}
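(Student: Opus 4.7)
The plan is to \emph{define} the cofibrations $\mathcal{C}$ as the retracts of maps in $\mathcal{C}'$ and the trivial cofibrations $\mathcal{CW}$ as the retracts of maps in $\mathcal{CW}'$, and then to verify the four axioms (M1)--(M4) directly. The axiom (M1) is immediate from condition (1) of Definition~\ref{def.leftpremodel}, while (M2) for $\mathcal{W}$ and $\mathcal{F}$ is condition (2); for $\mathcal{C}$ it follows because a retract of a retract of something in $\mathcal{C}'$ is itself a retract of something in $\mathcal{C}'$.

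The key technical step, from which everything else follows, is to identify the cofibrations and trivial cofibrations intrinsically by lifting properties. First I would show that $\mathcal{C}$ coincides with the class of maps having the left lifting property with respect to $\mathcal{F}\cap\mathcal{W}$: one inclusion is Lemma~\ref{ret-sol} applied to condition (4) of the pre-model structure, while the converse uses condition (5)(a) to factor a map $f$ with such LLP as $f=qj$ with $j\in\mathcal{C}'$ and $q\in\mathcal{F}\cap\mathcal{W}$, and then applies the Retract Argument (Proposition~\ref{retractargument}) to conclude that $f$ is a retract of $j$. The same argument, using condition (5)(b) and the LLP with respect to $\mathcal{F}$, shows that $\mathcal{CW}$ coincides with the class of maps having the left lifting property with respect to $\mathcal{F}$.

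With these two characterisations in hand, the remaining axioms become formal. Axiom (M3) is the content of the two characterisations. For (M4) I would argue that condition (5)(a) already gives factorisations (C,FW) since $\mathcal{C}'\subseteq\mathcal{C}$; for (CW,F), condition (5)(b) produces $g=pi$ with $i\in\mathcal{CW}'$ and $p\in\mathcal{F}$, and one only needs to check that $i$ belongs to $\mathcal{CW}=\mathcal{C}\cap\mathcal{W}$. Membership in $\mathcal{W}$ is condition (3), and membership in $\mathcal{C}$ follows from the lifting characterisation, since $i$ has the LLP with respect to $\mathcal{F}\supseteq\mathcal{F}\cap\mathcal{W}$. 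Along the way one verifies the identity $\mathcal{CW}=\mathcal{C}\cap\mathcal{W}$: the inclusion $\subseteq$ follows because every $f\in\mathcal{CW}'$ can, by the (C,FW) factorisation and the Retract Argument applied to its LLP with respect to $\mathcal{F}\cap\mathcal{W}$, be written as a retract of a map in $\mathcal{C}'$; the inclusion $\supseteq$ follows by the dual Retract Argument applied to a (CW,F)-factorisation of a map in $\mathcal{C}\cap\mathcal{W}$, using 2-out-of-3 to place the right factor in $\mathcal{F}$.

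Finally, uniqueness is automatic from the lifting characterisation: in any model structure with given $\mathcal{W}$ and $\mathcal{F}$, the cofibrations are forced to be exactly the maps with LLP with respect to $\mathcal{F}\cap\mathcal{W}$, hence coincide with our $\mathcal{C}$. The main obstacle I anticipate is precisely the identification $\mathcal{CW}=\mathcal{C}\cap\mathcal{W}$, since one has to juggle both factorisations together with the 2-out-of-3 property and the Retract Argument; every other step reduces to a direct application of Lemma~\ref{ret-sol} or Proposition~\ref{retractargument}.
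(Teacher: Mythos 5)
Your proposal is correct and follows essentially the same route as the paper: define $\mathcal{C}$ as the retracts of $\mathcal{C}'$, then use the factorisations of the pre-model structure together with the Retract Argument and 2-out-of-3 to identify the trivial cofibrations with the retracts of $\mathcal{CW}'$ and verify the lifting and factorisation axioms. Your extra step of first pinning down $\mathcal{C}$ and $\mathcal{CW}$ by their left lifting properties is a harmless reorganisation that also makes the uniqueness claim explicit, which the paper leaves to its earlier remark that two of the three classes determine the third.
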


\begin{proof}
We set $\sC$ the retracts of $\sC'$ and check that $\sC, \sF, \sW $ satisfy the model category axioms (Definition \ref{mod}). Axioms (M1) and (M2) follow immediately from the definition of pre-model structure and of $\sC$. As above, for notational simplicity we shall denote by $\sC\sW=\sC\cap \sW$ and 
$\sF\sW=\sF\cap \sW$.

We first show that $\sC\sW'\subset \sC$. Let $g\colon A\to B$ be a morphism in $\sC\sW'$ and consider a factorisation  $g\colon A\xrightarrow{\sC'} X \xrightarrow{\sF\sW} B$. Since 
$g$ has the   left lifting property with respect to maps in $\sF\sW$ it follows by the retract argument that $g$ is a retract of an element of $\sC'$. Since $\sC\sW'\subset \sW$ by assumption, we have $\sC\sW'\subset \sC\sW$.

We now show that every map in $\sC\sW$ is a retract  of a  map in $\sC\sW'$. 
Let $f\colon A \longrightarrow B$ be a map in $\sC\sW$, using the factorisation axiom $f\colon A\xrightarrow{\sC\sW'} X \xrightarrow{\sF} B$, and by the 2-out-of-3 axiom $X \longrightarrow B$ is in $\sF\sW$. Therefore by the lifting axiom and the retract argument (\ref{retractargument}), $A \longrightarrow B$ is a retract of $A \longrightarrow X$, so it is the retract of a map in $\sC\sW'$.

By definition of pre-model structure, maps in $\sC'$ have the left lifting property with respect to maps in $\sF\sW$; then by Lemma \ref{ret-sol} maps in $\sC$ have the left lifting property with respect to maps in $\sF\sW$. Similarly, maps in $\sC\sW'$ have the left lifting property with respect to maps in $\sF$, so we have that maps in $\sC\sW$ have the left lifting property with respect to maps in $\sF$.

The factorisation axiom (M4) is clear since we have already proved that $\sC\subset \sC'$ and 
$\sC\sW' \subseteq \sC\sW$.
\end{proof}

It is plain that one can also give the analogous notion of right pre-model structure, 
simply working in the opposite category and exchanging the role of $\sC$ and $\sF$. 
Finally, the reader should be aware that some authors use the  name of pre-model structure for a completely different concept.

\bigskip
\section{Model structure on DG-algebras}
\label{sec.model_alg}

It is well known that the category $\CDGA_{\K}$ admits a model structure where weak equivalences are the quasi-isomorphisms and fibrations are the surjective maps \cite{Bous,GM}. 
Consequently, by Lemma~\ref{lem.liftingdefinition} the cofibrations are the morphisms that have the left lifting property with respect to the class of surjective quasi-isomorphisms. 

Similarly, the category 
$\mathbf{CDGA}_{\K}^{\le 0}$ of DG-algebras concentrated in non-positive degree
 admits a model structure where weak equivalences are the quasi-isomorphisms and fibrations are the surjective maps in negative degree. It is worth noticing that the existence of the model structure on $\mathbf{CDGA}_{\K}^{\le 0}$ is an 
immediate consequence of \cite[Proposition 4.5.4.6]{Lur2} applied to the standard (cofibrantly generated)
model structure on the category of non-positively graded DG-vector spaces. Moreover,
by Lurie's result also follows that the model structure on $\mathbf{CDGA}_{\K}^{\le 0}$ is combinatorial and 
cofibrantly generated.

In this section, following the ideas of  \cite{Jar},  we give an elementary proof of the 
above mentioned model structure
on $\mathbf{CDGA}_{\K}^{\le 0}$, which relies on the notion of semifree extension.

\begin{definition}[Semifree extension]\label{def.semifree}
  Let $A \in \cdgamz$, $I$ be a set, and let $x_i$, $i \in I$ be indeterminates of non-positive degree  $\overline{x_i} \in \mathbb{Z}_{\le 0}$.
  Any inclusion of DG-algebras of type
  \[ A  \longrightarrow A[\{x_i\}],\]
regardless of the differential on $A[\{x_i\}]$,  is called a \emph{semifree extension}.  
\end{definition}

Recall that a differential on $A[\{x_i\}]$ is determined by the differential on $A$ and by  the values $d(x_i)$. Every free extension is also semifree. 

\begin{theorem}\label{thm.modelcdga}
There exists a model structure on the category $\cdgamz$, where weak equivalences are  
the quasi-isomorphisms and fibrations are the maps surjective in negative degree. 
Moreover: 
\begin{enumerate}
\item cofibrations are the retracts of semifree extensions, 

\item  trivial cofibrations are the retracts of free extensions,

\item trivial fibrations are the surjective quasi-isomorphisms.
\end{enumerate}
\end{theorem}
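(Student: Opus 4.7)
The plan is to apply Theorem~\ref{thm.premodel} with the following data: take $\sW$ to be the quasi-isomorphisms, $\sF$ the maps surjective in negative degrees, $\sC'$ the semifree extensions, and $\sC\sW'$ the free extensions. Axioms (1)--(3) of Definition~\ref{def.leftpremodel} are essentially immediate: the 2-out-of-3 property for quasi-isomorphisms is standard, the classes $\sW$ and $\sF$ are evidently closed under retracts, and the inclusion $\sC\sW'\subset\sW$ is precisely Lemma~\ref{lem.freequasiiso}.

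Axiom (4) splits into two lifting statements. For a free extension $A\to A[\{x_i,dx_i\}]$ against a map $p\colon C\to D$ surjective in negative degrees, observe that in $\cdgamz$ every free generator must satisfy $\bar{x_i}\le -1$ (since $\bar{dx_i}=\bar{x_i}+1\le 0$); a lift is therefore produced directly by choosing for each $x_i$ a preimage in $C^{\bar{x_i}}$ of its image in $D$, the value on $dx_i$ being forced by the chain map condition. For a semifree extension $A\to A[\{x_i\}]$ against a surjective quasi-isomorphism $p\colon C\to D$, the kernel $\ker p$ is acyclic (by the long exact cohomology sequence), and one proceeds by transfinite induction on the well-ordering witnessing semifreeness: having lifted the $x_j$ for $j<i$, one picks any preimage $c_0\in C^{\bar{x_i}}$ of $\psi(x_i)$, observes that $dc_0-\tilde\psi(dx_i)$ is a cycle in $\ker p$, and uses acyclicity of $\ker p$ to correct $c_0$ by an appropriate boundary (the obstruction vanishes automatically when $\bar{x_i}=0$, since $\ker(p)^1=0$).

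The main technical obstacle is the $(\sC',\sF\sW)$ factorisation. Given $g\colon A\to B$ I would build $Y$ as the colimit of a tower $Y_0\subset Y_1\subset\cdots$ of semifree extensions of $A$. Set $Y_0=A[\{y_b:b\in B^n,\ n\le 0\}]$, adjoined by induction on $-n$ starting from degree $0$, with $dy_b=y_{db}$, so that the evaluation $y_b\mapsto b$ makes $Y_0\to B$ surjective on underlying cochains; then at each subsequent stage adjoin, for every cycle $z\in Y_k$ whose image is a coboundary $dw$ in $B$, a semifree generator $u_z$ of degree $\bar z-1$ with $du_z=z$ and $u_z\mapsto w$. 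Each step is a semifree extension, the colimit $Y$ is therefore semifree over $A$, the induced map $Y\to B$ is surjective, and it is a quasi-isomorphism since any cycle in $Y$ mapping to a coboundary in $B$ already lies in some $Y_k$ and becomes exact in $Y_{k+1}$. The $(\sC\sW',\sF)$ factorisation is considerably easier: the single free extension $A\to A[\{x_b,dx_b:b\in B^n,\ n\le -1\}]$ evaluating $x_b\mapsto b$ and $dx_b\mapsto db$ does the job.

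Theorem~\ref{thm.premodel} then delivers the model structure together with items (1) and (2) of the statement. For item (3), it remains to upgrade ``quasi-isomorphism surjective in negative degrees'' to ``surjective in all degrees'': given $d\in D^0$, pick $c\in C^0$ with $[p(c)]=[d]$ in $H^0(D)$, so $p(c)-d=d(e)$ for some $e\in D^{-1}$; by surjectivity of $p$ in degree $-1$ lift $e$ to $\tilde e\in C^{-1}$ and replace $c$ by $c-d(\tilde e)$, which is still an element of $C^0$ mapping to $d$. The converse inclusion is immediate.
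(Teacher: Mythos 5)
Your proposal is correct and follows essentially the same route as the paper: the identical left pre-model structure $(\sW,\sF,\sC',\sC\sW')$, degree-by-degree lifting for semifree extensions (your correction via acyclicity of $\ker p$ is equivalent to the paper's use of cohomological injectivity plus surjectivity on cocycles), a Tate-style tower for the $(\sC',\sF\sW)$ factorisation, and the same argument upgrading trivial fibrations to full surjectivity. The only point worth making explicit is that the ``well-ordering witnessing semifreeness'' you invoke is, for the paper's definition of semifree extension, supplied by the filtration by degree: since all generators sit in non-positive degrees and $d$ raises degree by one, $dx_i$ can only involve generators of degree strictly greater than $\bar{x_i}$.
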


The proof that every trivial fibration is surjective is a simple argument in basic homological algebra.
In fact, if  $f\colon A \longrightarrow B$ is a quasi-isomorphism which is surjective in negative degree, 
for every $x \in B^0$, since $dx=0$  and $f\colon H^0(A)\to H^0(B)$ is bijective, 
there exist $y \in A^0$ and $z\in B^{-1}$ such that $f(y)=x+dz$. Since 
$f\colon A^{-1} \longrightarrow B^{-1}$ is surjective there exists $u \in A^{-1}$ such that $f(u)=z$ 
and therefore $x= f(y-du)$.

Now the  proof of Theorem~\ref{thm.modelcdga} follows, according to Theorem~\ref{thm.premodel}, from the fact that the four classes:
\begin{enumerate}
    \item $\mathcal{W}$  quasi-isomorphisms,
    \item $\mathcal{F}$ maps surjective in negative degree,
    \item $\mathcal{C'}$ semifree extensions,
    \item $\mathcal{CW'}$ free extensions.
\end{enumerate}
form a left pre-model structure. We have already proved in Lemma~\ref{lem.freequasiiso} that $\sC\sW'\subset \sW$.
The 2-out-of-3 axiom for $\sW$ is clear, and 
the retract axiom for $\sW,\sF$ is also obvious: the retract of a injective (surjective) map is also injective (surjective).

\begin{proposition}\label{sol1}
The maps in $\mathcal{C'}$, i.e., the semifree extensions,  have the left lifting property with respect to all trivial fibrations.
\end{proposition}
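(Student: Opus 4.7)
The plan is: given a commutative square with top map $\alpha\colon A\to C$ and bottom map $\beta\colon A[\{x_i\}]\to D$, where $f\colon C\to D$ is a trivial fibration, I construct a DG-algebra homomorphism $g\colon A[\{x_i\}]\to C$ extending $\alpha$ and lifting $\beta$ by specifying $g(x_i)\in C^{\bar{x_i}}$ one variable at a time, using a filtration on $A[\{x_i\}]$ by degree of generator. Before starting I would note that, as shown in the text just above the statement, $f$ is actually surjective in all degrees, so $0\to\Ker f\to C\xrightarrow{f} D\to 0$ is short exact.

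The key combinatorial observation to record first is that any monomial $a\,x_{j_1}\cdots x_{j_r}$ of total degree $\bar{x_i}+1$ appearing in $d(x_i)$ involves only generators of degree strictly greater than $\bar{x_i}$: if $\bar{a}+\sum_\ell\bar{x_{j_\ell}}=\bar{x_i}+1$ with every summand non-positive, then no single $\bar{x_{j_\ell}}$ can be $\le\bar{x_i}$, otherwise the whole sum would be $\le\bar{x_i}<\bar{x_i}+1$. Consequently the sub-algebras $A_k:=A[\{x_j:\bar{x_j}\ge -k\}]$ are closed under $d$ and yield a filtration to induct on; moreover within each slice the generators of degree exactly $-k$ do not appear in each other's differentials, so they may be handled independently.

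For the base case $\bar{x_i}=0$ one has $d(x_i)\in A[\{x_j\}]^1=0$, so $g(x_i)$ just needs to be any preimage of $\beta(x_i)\in D^0$ in $C^0$, which exists by surjectivity. For the inductive step at degree $-k$ with $k\ge 1$, I pick any $c'\in C^{-k}$ with $f(c')=\beta(x_i)$ and correct it by an element of $\Ker f$. The defect $v:=g(dx_i)-dc'\in C^{-k+1}$ lies in $\Ker f$ (since $f(v)=\beta(dx_i)-d\beta(x_i)=0$) and is a cocycle (since $dv=g(d^2x_i)-d^2c'=0$). The crucial input is that $\Ker f$ is acyclic: the long exact sequence of the short exact sequence above, combined with $H^*(f)$ an isomorphism, forces $H^*(\Ker f)=0$. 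Acyclicity produces $w\in(\Ker f)^{-k}$ with $dw=v$, and $g(x_i):=c'+w$ satisfies both $f(g(x_i))=\beta(x_i)$ and $dg(x_i)=g(dx_i)$, extending $g$ to $A_k$ as a DG-algebra map.

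The main obstacle is really the filtration observation at the start: it is precisely the non-positive grading that rules out circular dependencies among the generators and makes the induction well-founded (in positively-graded settings a transfinite choice of ordering would be needed, imposed as part of the definition of semifreeness). Once the filtration is in hand, the rest is a textbook homological-algebra manoeuvre resting on the acyclicity of $\Ker f$, which uses both halves of the trivial fibration hypothesis.
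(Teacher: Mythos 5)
Your proof is correct and follows essentially the same route as the paper's: induction over the filtration $A_k=A[\{x_j:\bar{x_j}\ge -k\}]$, with the degree-$0$ generators handled by surjectivity alone and the degree $-k$ generators by the homological content of the trivial fibration. The only difference is cosmetic: where you take an arbitrary preimage $c'$ of $\beta(x_i)$ and correct it by a primitive of the defect in the acyclic kernel $\Ker f$, the paper first writes $\gamma_{k-1}(dx_i)=dy_i$ using injectivity on cohomology and then adjusts $y_i$ by a cocycle lifting $\beta(x_i)-g(y_i)$; these are the same argument packaged differently.
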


\begin{proof}
Consider the following solid commutative diagram
 \[
  \begin{tikzcd}
   A \arrow[r, "\alpha"] \arrow[d, "f"'] & C \arrow[d, "g"] \\
   A[\{x_i\}] \arrow[ru, dotted, "\gamma"] \arrow[r, "\beta"] & D
  \end{tikzcd}
 \]
where $g$ is  a trivial fibration and  $f$ a semifree extension.
For every integer $n\ge -1$ consider the DG-subalgebra of  $A[\{x_i\}]$
\[ A_n=A[\{ x_i\mid \bar{x_i}\ge -n\}]\,.\]
We have $A_{-1}=A$, $\cup_{n}A_n=A[\{x_i\}]$ and therefore, setting $\gamma_{-1}=\alpha$ it is sufficient to prove by induction that for every $n\ge 0$ we have a commutative diagram
 \[
  \begin{tikzcd}
   A_{n-1} \arrow[r, "\gamma_{n-1}"] \arrow[d] & C \arrow[d, "g"] \\
   A_n \arrow[ru,  "\gamma_n"] \arrow[r, "\beta"] & D
  \end{tikzcd}
 \]
where the left vertical arrow is the inclusion $A_{n-1}\subset A_n$.
If $n=0$, then for every $x_i$ with  $\overline{x_i}=0$ we have  $dx_i=0$. Since $g$ is surjective  there exists $c_i \in C$ such that $g(c_i)= \beta (x_i)$. We define $\gamma_0$ by setting
 $\gamma_0(x_i)= c_i$.

Assume now $n>0$ and $\gamma_{n-1}$ already defined. 
If $\overline{x_i} = -n$, then  $\overline{d(x_i)}= - n+1$ and therefore $dx_i\in A_{n-1}$. 
We have that
\[ d\gamma_{n-1}(dx_i)=\gamma_{n-1}(d^2x_i)=0,\qquad  g\gamma_{n-1}(dx_i)= \beta(dx_i)= d\beta(x_i)\]
and since $g$ is injective in cohomology we have $\gamma_{n-1}(dx_i)= dy_i$ with $y_i \in C$. 
Setting  $z_i= \beta(x_i) - g(y_i)$
we have 
\[dz_i=\beta(dx_i)-g(dy_i)=\beta(dx_i)-g\gamma_{n-1}(dx_i)=0\,.\]
Since $g$ is a surjective quasi-isomorphism there exists $c_i \in C$ such that 
$dc_i=0$ and $g(c_i)=z_i$. We can now define $\gamma_n(x_i)=y_i + c_i$.
\end{proof}

\begin{proposition}
Maps in $\mathcal{CW'}$ have the left lifting property with respect to all fibrations.
\end{proposition}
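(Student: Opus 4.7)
The plan is to follow the same pattern as Proposition~\ref{sol1}, but to exploit the fact that free extensions are structurally simpler than semifree ones: for a free extension $A\to A[\{x_i,dx_i\}]$ the generators $dx_i$ carry no relation other than $d(dx_i)=0$, so once a lift of each $x_i$ is chosen, the lift of $dx_i$ is forced. In particular no induction on the degree and no cohomological argument is needed.

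Concretely, consider the solid diagram
\[
\begin{tikzcd}
A \arrow[r, "\alpha"] \arrow[d, "f"'] & C \arrow[d, "g"] \\
A[\{x_i,dx_i\}] \arrow[r, "\beta"] \arrow[ru, dotted, "\gamma"] & D
\end{tikzcd}
\]
with $g$ surjective in negative degree. By the universal property of free extensions stated right after Definition~\ref{def.free}, to define $\gamma$ as a DG-algebra morphism extending $\alpha$ it suffices to assign, for every $i$, an element $\gamma(x_i)\in C^{\overline{x_i}}$; then necessarily $\gamma(dx_i)=d\gamma(x_i)$.

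The key observation is that we work in $\cdgamz$, where $\overline{dx_i}=\overline{x_i}+1\le 0$ forces $\overline{x_i}\le -1$, so every generator $x_i$ sits in \emph{strictly} negative degree. Since $g$ is surjective in negative degree, for each $i$ we can pick $c_i\in C^{\overline{x_i}}$ with $g(c_i)=\beta(x_i)$ and set $\gamma(x_i)=c_i$. The extension to a DG-algebra morphism is automatic (there is no differential relation among the $x_i,dx_i$ to check besides $d^2=0$, which holds in $C$). The resulting $\gamma$ satisfies $\gamma f=\alpha$ by construction, while $g\gamma=\beta$ holds on $A$, on the generators $x_i$ (by choice of $c_i$) and on the generators $dx_i$ because
\[
g(\gamma(dx_i))=g(d\gamma(x_i))=d\,g(c_i)=d\beta(x_i)=\beta(dx_i),
\]
so the equality $g\gamma=\beta$ propagates to the whole of $A[\{x_i,dx_i\}]$ since both sides are morphisms of DG-algebras agreeing on a generating set.

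There is no serious obstacle here; compared to Proposition~\ref{sol1}, the cocycle matching step and the quasi-isomorphism hypothesis become unnecessary precisely because $dx_i$ is a free generator rather than an element of the previously constructed subalgebra. The only point worth stressing in the writeup is the degree bookkeeping that confines $\overline{x_i}$ to strictly negative degrees inside $\cdgamz$, which is what makes plain surjectivity (rather than surjectivity plus a cohomological lift) sufficient.
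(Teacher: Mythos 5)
Your proof is correct and is essentially the paper's own argument: both use that in $\cdgamz$ the constraint $\overline{dx_i}=\overline{x_i}+1\le 0$ forces every $x_i$ into strictly negative degree, so surjectivity of $g$ in negative degrees lets you pick $c_i$ with $g(c_i)=\beta(x_i)$ and the universal property of free extensions does the rest. The only (harmless) omission is the degenerate case $A=0$, which the paper dispatches in one clause before making the degree observation.
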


\begin{proof} Consider the following solid commutative diagram
\[
  \begin{tikzcd}
   A \arrow[r, "\alpha"] \arrow[d, "i"'] & C \arrow[d, "g"] \\
   A[\{x_i, dx_i\}] \arrow[ru, dotted, "h"] \arrow[r, "\beta"] & D
  \end{tikzcd}
 \]
with $g$ surjective in negative degrees. If $A=0$ there is nothing to prove; otherwise, since 
$A[\{x_i, dx_i\}]\in \mathbf{CDGA}^{\le0}_{\K}$, every  $x_i$ has  negative degree and there exists $c_i \in C$ such that $g(c_i)= \beta(x_i)$. We set $h(x_i)=c_i$, $h(dx_i)=dc_i$, and $h|_A=\alpha$. 
\end{proof}

\begin{proposition}\label{prop.CW-F}
Every map in $\mathbf{CDGA}^{\leq0}_{\K}$ can be factored as a free extension  followed by a fibration. 
\end{proposition}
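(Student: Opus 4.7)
The plan is to realise the factorisation as $A \xrightarrow{\,i\,} A' \xrightarrow{\,h\,} B$ where $A'$ is a single free extension of $A$, large enough that the induced map $h$ onto $B$ hits every element in strictly negative degrees, with $h$ prescribed on the new generators by sending each new indeterminate to the element of $B$ that it is meant to cover.

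Concretely, for every integer $n \le -1$ and every nonzero homogeneous $b \in B^n$ I would introduce a pair of indeterminates $(x_b, dx_b)$ of degrees $\bar{x_b} = n$ and $\bar{dx_b} = n+1$. Both degrees are $\le 0$, so the free extension
\[
i\colon A \longrightarrow A' := A\bigl[\{x_b, dx_b\}_{\,n\le -1,\; 0\ne b\in B^n}\bigr]
\]
remains in $\CDGA^{\le 0}_{\K}$. The universal property of free extensions recorded just after Definition~\ref{def.free} then produces a unique DG-algebra morphism $h\colon A' \to B$ extending $f$ and satisfying $h(x_b) = b$ for all $b$; by that same universal property $h(dx_b) = db$ automatically, so there is nothing further to verify about compatibility with the differential.

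The two properties are then immediate. The map $i$ is a free extension by construction. For the fibration condition, fix $n \le -1$ and $b \in B^n$: if $b = 0$ the element is trivially in the image, while if $b \ne 0$ then $h(x_b) = b$; hence $h$ is surjective in every negative degree and is therefore a fibration. The only point requiring a moment of attention is to ensure that every new generator lies in degree $\le 0$, which is why one only adds generators $x_b$ with $\bar{b} \le -1$ (so that the companion $dx_b$ still has degree $\le 0$); consequently one does \emph{not} attempt to cover elements of $B^0$, but this is harmless since fibrations are only required to be surjective in negative degrees. No small-object / transfinite iteration is needed, because adjoining all the $x_b$ at once already suffices.
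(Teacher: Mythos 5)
Your proof is correct and is essentially identical to the paper's: both adjoin a pair of generators $(x_b,dx_b)$ for each homogeneous $b\in B$ of strictly negative degree, extend $f$ by $x_b\mapsto b$ via the universal property of free extensions, and observe that the resulting map is surjective in negative degrees. The only (immaterial) difference is that you restrict to nonzero $b$ and spell out the degree bookkeeping a bit more explicitly.
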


\begin{proof}
Let $f: A \rightarrow B$ be a map in $\mathbf{CDGA}^{\leq0}_{\K}$. For every homogeneous element $b \in B$ of strictly negative degree we add two indeterminates $x_b$ and $dx_b$ to $A$, with $x_b$ of degree $\overline{b}$ and $dx_b$ of degree $\overline{b}+1$, obtaining the free extension
$$ A \overset{i}\longrightarrow A[\{x_b, dx_b\}]$$
We define $\pi: A[\{x_b, dx_b\}] \rightarrow B$ in the following way:
\begin{enumerate}
\item $\pi$ is equal to $f$ on $A$,
\item $\pi(x_b)= b$,
\item $\pi(dx_b)= db$.
\end{enumerate}
The map $\pi$ is obviously a fibration and the composition 
$$ A \overset{i}\longrightarrow A[\{x_b, dx_b\}] \overset{\pi}\longrightarrow B$$ is equal to $f$. 
\end{proof}

\begin{proposition}
Every map in $\mathbf{CDGA}^{\leq0}_{\K}$ can be factored as a semifree extension  followed by a trivial fibration.

\end{proposition}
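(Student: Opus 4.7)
My plan is to adapt the free-extension construction of Proposition~\ref{prop.CW-F} into a countable iterative procedure that simultaneously arranges surjectivity and the quasi-isomorphism property. Starting from $f\colon A\to B$, I build a chain of semifree extensions
\[
A = X_0 \subseteq X_1 \subseteq X_2 \subseteq \cdots
\]
together with compatible maps $\pi_n\colon X_n\to B$ extending $f$, and then set $X=\bigcup_n X_n$ and $\pi=\bigcup_n\pi_n$. Since a union of semifree extensions is again semifree, the inclusion $A\hookrightarrow X$ is automatically a semifree extension.

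At each step $n+1$ I adjoin to $X_n$ three families of new indeterminates. First, for each homogeneous $b\in B$ of non-positive degree not in $\pi_n(X_n)$, I adjoin $x_b$ of degree $\bar b$, and when $\bar b<0$ also a new generator $x'_b$ of degree $\bar b+1$, exactly as in Proposition~\ref{prop.CW-F}, with $dx_b=x'_b$, $dx'_b=0$, $\pi(x_b)=b$, $\pi(x'_b)=db$. Second, for each cohomology class $[\beta]\in H^k(B)$, $k\le 0$, not in the image of $H^k(\pi_n)$, I fix a cocycle representative $\beta$ and adjoin a cocycle generator $y_\beta$ of degree $k$ with $dy_\beta=0$ and $\pi(y_\beta)=\beta$. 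Third, for each cocycle $c\in Z^k(X_n)$ whose image $\pi_n(c)=db$ is a coboundary in $B$, I fix such a $b\in B^{k-1}$ and adjoin $z_c$ of degree $k-1$ with $dz_c=c$ and $\pi(z_c)=b$. Each such adjunction is a semifree extension, and the compatibility conditions $d^2=0$ and $\pi d=d\pi$ on the new generators are immediate from the definitions.

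The map $\pi$ is surjective in all non-positive degrees by the first family, so it is a fibration. For the quasi-isomorphism, I use that filtered colimits commute with cohomology, so $H^k(X)=\colim_n H^k(X_n)$. Surjectivity of $H^k(\pi)$ is immediate from the second family: $H^k(\pi)[y_\beta]=[\beta]$, and such a $y_\beta$ has been adjoined for every class not already hit by $H^k(\pi_0)$. Injectivity of $H^k(\pi)$ follows from the third family: any cocycle $c\in X$ is a polynomial in finitely many generators, hence lies in some $X_n$; if $\pi(c)$ is a coboundary, the generator $z_c$ is adjoined at stage $n+1$ and kills $[c]$ in $H^k(X_{n+1})\subseteq H^k(X)$.

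The principal technical point is the bookkeeping for this countable small object argument: each family of new generators can introduce further cocycles, new products, and potentially new cohomology classes that must be addressed at subsequent stages. Convergence is ensured by the observation that every element of $X$ is a polynomial in finitely many generators and therefore lies in some finite stage $X_n$, so every offending element, cocycle, or coboundary relation is registered at some finite stage and resolved at the next one.
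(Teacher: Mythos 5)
Your proof is correct, and the overall strategy -- a countable tower of semifree extensions $A=X_0\subseteq X_1\subseteq\cdots$ whose union gives the factorisation -- is the same as the paper's; but the bookkeeping is genuinely different. The paper first reduces, via Proposition~\ref{prop.CW-F} and closure of semifree extensions under composition, to producing merely a quasi-isomorphism, and then builds a Tate--Tyurina resolution organised by degree: at stage $n$ only generators of degree $-n$ are adjoined, first to make $Z^{-n}(A_n)\to Z^{-n}(B)$ surjective and then to kill the kernel of $H^{-n+1}$. The payoff is stabilisation, $H^i(A_n)=H^i(A_{n+1})$ for $i>-n+1$, so passing to the colimit is immediate and the resolution can be kept small (in line with the authors' stated preference for minimal resolutions). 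You instead correct all defects -- non-surjectivity on elements, non-surjectivity on cohomology, non-injectivity on cohomology -- in all degrees simultaneously at every stage, in the style of a countable small object argument; no degree ever stabilises, and convergence rests on the exactness of filtered colimits, $H^k(X)=\colim_n H^k(X_n)$, together with the fact that every cocycle of $X$ lies in some finite stage, where the offending witness is then adjoined one stage later. This buys directness (surjectivity of $\pi$ in all degrees comes for free from your first family, so no reduction step is needed) at the cost of a vastly non-minimal resolution. Your verifications -- well-definedness of $d$ and $\pi$ on the new generators, semifreeness of the union as a polynomial extension on generators of non-positive degree regardless of the differential, and the colimit argument for bijectivity of $H^k(\pi)$ -- are all sound.
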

\begin{proof}
Since  semifree extensions are closed by composition, 
according to  Proposition~\ref{prop.CW-F} it is sufficient to prove that every morphism 
$f\colon A \rightarrow B$ factors as a composition of a semifree extension and a quasi-isomorphism.

We use the differential graded analog of  the classical argument about the existence of 
Tate-Tyurina resolutions: 
we construct recursively a countable sequence of semifree extensions
$$ A=A_0 \subset A_1 \subset \cdots \subset A_n \subset \cdots $$ 
together with morphisms of DG-algebras
$f_n\colon A_n \rightarrow B$ such that $f_0=f$ and:
\begin{enumerate}
\item $A_{n+1}=A_n[\{x_i\}]$, with $\bar{x_i}=-n$;
\item $f_{n+1}$ extends $f_n$;
\item $f_n\colon Z^i(A_n)\to  Z^i(B)$ is surjective for every $i>-n$;
\item $f_n\colon H^i(A_n)\to  H^i(B)$ is  bijective for every 
$i>-n+1$.
\end{enumerate}

Let $\{b_i\}$, $i\in I$, be a set of generators of $B^0$ as a $A^0$-algebra; then we may define 
\[A_1=A[\{x_i\}],\qquad \bar{x_i}=0,\quad dx_i=0,\quad f_1(x_i)=b_i\,.\]

Assume now $n>0$ and $f_n\colon A_n\to B$ defined. 
By choosing a suitable set of generators of $Z^{-n}(B)$ as $A^0_n$-module we can first consider a factorisation  
$f_n\colon A_n\subset C\xrightarrow{g}B$ such that 
\[C=A_n[\{x_i\}],\qquad \bar{x_i}=-n,\quad dx_i=0,\quad f_n(x_i)\in Z^{-n}(B),\]
and such that $g\colon Z^{-n}(C)\to Z^{-n}(B)$   is surjective.
If $g\colon H^{-n+1}(C)\to H^{-n+1}(B)$ is bijective we can define $A_{n+1}=C$ and $f_{n+1}=g$.
Otherwise let $\{c_i\}$ be a set of elements in $Z^{-n+1}(C)$ whose cohomology classes generate
the kernel of $g\colon H^{-n+1}(C)\to H^{-n+1}(B)$,
choose elements $b_i\in B^{-n}$ such that $db_i=g(c_i)$  
 and consider the factorisation
\[A_{n+1}=C[\{x_i\}],\qquad \bar{x_i}=-n,\quad dx_i=c_i,\quad f_{n+1}(x_i)=b_i.\]
It is easy to verify that the map $f_{n+1}\colon A_{n+1}\to B$ has the required properties.
Finally, since $H^i(A_n)=H^i(A_{n+1})$ for every $i>-n+1$, the colimit of the sequence
 $f_{n+1}\colon A_{n+1}\to B$ gives the required factorisation.
\end{proof}

\begin{remark}\label{rem.samecofibrations} 
Let $f$ be a morphism in $\CDGA_{\K}^{\le 0}$. 
We have already proved that $f$ is a trivial  fibration in $\CDGA_{\K}^{\le 0}$ if and only if 
it is a trivial  fibration in $\CDGA_{\K}$. 
Since the truncation functor 
\[\tau\colon  \CDGA_{\K}\to \CDGA_{\K}^{\le 0},\qquad (\tau A)^n=\begin{cases}A^n&\quad n<0,\\
Z^0(A)&\quad n=0,\\
0&\quad n>0,\end{cases}\]
is right adjoint to the faithful natural inclusion $\CDGA_{\K}^{\le 0}\subset \CDGA_{\K}$ and 
preserves trivial fibrations, by Lemma~\ref{lem.liftingdefinition} it follows that  $f$ is cofibration in $\CDGA_{\K}^{\le 0}$ if and only if 
it is a cofibration in $\CDGA_{\K}$. 
\end{remark}

The notions of semifree extension and left pre-model structure apply to many other contexts, for instance cochain complexes over a commutative ring, DG-algebras, DG-Lie algebras etc.: full details will appear in the forthcoming thesis of the first author.

\bigskip
\section{Modules and derivations}
\label{sec.derivations}

Let $(A,d_A)$ be in $\mathbf{CDGA}_{\K}$, an $A$-module is a
differential graded vector space $(M,d_M)$ together with an associative and 
distributive  $\mathbb{K}$-linear left multiplication
map $A\times M\to M$, with the
properties:
\begin{enumerate}
\item $A^{i}M^{j}\subset M^{i+j}$,
\item $d_M(am)=d_A(a)\,m+(-1)^{\bar{a}}a\,d_M(m)$ for every $a\in A$, $m\in M$.
\end{enumerate}

A morphism of $A$-modules is a morphism of differential graded vector spaces commuting with multiplications. Since $A$ is graded commutative, we can also define an associative right multiplication map 
$M\times A\to M$ by setting $ma=(-1)^{\bar{a}\,\bar{m}}am$,  $a\in A$, $m\in
M$. Notice that $a(mb)=(am)b$ for every $a,b\in A$, $m\in M$.
 
The \emph{trivial extension} of a DG-algebra $A$ by the $A$-module $M$ is the direct sum of complexes
$A\oplus M$ equipped with the product:
\[ (a,m)(b,n)=(ab,mb+an).\]
It is immediate to see that $A\oplus M\in \mathbf{CDGA}_{\K}$, the projection $A\oplus M\to A$ is a morphism of DG-algebras and $M$ is a square-zero ideal of $A\oplus M$.

For a given graded vector space $M$ and an integer $n$ we shall denote by $M[-n]$ the same space with the degrees shifted by $-n$, namely $M[-n]^i=M^{i-n}$, and by $s^n\colon M\to M[-n]$ the tautological (bijective) map of degree $n$. In other words, $s^n$ is the essentially the identity and its only effect is changing the degree:
\[ s^n\colon M^{i}\to M[-n]^{i+n},\qquad x\mapsto s^nx\,.\]
If $M$ is an $A$-module, then $M[-n]$ is also an $A$-module, where the differential and the product are defined accordingly to the Koszul sign rule:
\[ d(s^nx)=(-1)^ns^n d(x),\qquad a(s^nx)=(-1)^{n\bar{a}}s^n(ax),\qquad (s^nx)a= s^n(xa)\,.\]

\begin{definition}
    Let $M$ be an $A$-module. A $\mathbb{K}$-linear map $\alpha\colon A \rightarrow M$ is a derivation of degree $j \in \mathbb{Z}$ if $\alpha(A^n) \subset M^{n+j}$ and it satisfies Leibniz's law: 
    $$ \alpha(ab)= \alpha(a)b + (-1)^{\overline{a}j} a \alpha(b)$$
\end{definition}

The vector space of derivations of degree $j$ from $A$ to $M$ is denoted 
$\Der^{j}_{\K}(A,M)$. The graded vector space 
$\Der^{*}_{\K}(A,M)= \bigoplus_{j \in \mathbb{Z}}\Der^{j}_{\K}(A,M)$ has a natural structure of $A$-module, 
with multiplication $(a\alpha)(x)=a(\alpha(x))$ and differential 
$(d\alpha)(x)=d(\alpha(x))-(-1)^{\bar{\alpha}}\alpha(dx)$.
Observe that for every  integer $n$ there is a natural isomorphism of $A$-modules
\[ \Der^*_{\K}(A,M[-n])\to \Der^*_{\K}(A,M)[-n]\,.\]

Every morphism of DG-algebras  
$f\colon A\to B$ induces in the natural way an $A$-module  structure  on $B$.
In this case the module of derivations will be denoted $\Der^{*}_{\K}(A,B;f)$: a $\K$-linear map 
$\alpha\colon A\to B$ is an $f$-derivation of degree $k$ if 
$\alpha(A^n)\subset B^{n+k}$ and 
$\alpha(ab)=\alpha(a)f(b)+(-1)^{k\bar{a}}f(a)\alpha(b)$.

\begin{remark} Let $f\colon A\to B$ be a morphism of DG-algebras,  $I\subset B$ a square-zero ideal
and $\pi\colon B\to B/I$ the quotient map.  
Then $I$ is  a $B/I$-module and then also an $A$-module via the morphism $\pi f$. 
It is immediate to check that if $g\colon A\to B$ is a morphism of 
graded algebras such that $\pi g=\pi f$ then $g-f\colon A\to I$ is a 
derivation of degree $0$. Conversely, if $\alpha\in \Der^0_{\K}(A,I)$, then 
$f+\alpha$ is a morphism of graded algebras, and it is a morphism of DG-algebras if and only if 
$\alpha\in Z^0(\Der^*_{\K}(A,I))$.
\end{remark}

\begin{lemma}\label{lem.derivationproperties} 
Let $A\in \CDGA_{\K}$ be a cofibrant algebra and $f\colon M\to N$ a surjective quasi-isomorphism of $A$-modules. Then the map
\[ f_*\colon \Der^*_{\K}(A,M)\to \Der^*_{\K}(A,N),\qquad \alpha\mapsto f\alpha\,,\]
is a surjective quasi-isomorphism.
\end{lemma}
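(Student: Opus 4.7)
The plan is to reduce the statement, via a short exact sequence, to the acyclicity of $\Der^*_\K(A,K)$ for every acyclic $A$-module $K$, and then to establish that acyclicity by a filtration argument on a cellular semifree presentation of $A$.

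Since $A$ is cofibrant in $\CDGA_\K$, it is a retract of a cellular semifree extension $B=\colim_\alpha B_\alpha$ in which $B_{\alpha+1}=B_\alpha[x_\alpha]$ with $dx_\alpha\in B_\alpha$, via maps $A\xrightarrow{\iota}B\xrightarrow{\varrho}A$ satisfying $\varrho\iota=\id_A$; precomposition with these maps naturally exhibits $\Der^*_\K(A,-)$ as a retract of $\Der^*_\K(B,\varrho^*-)$. Since surjective quasi-isomorphisms of complexes are closed under retracts, it suffices to prove the lemma when $A=B$ is itself cellular semifree. In that case a derivation is uniquely specified by its (arbitrary) values on the generators $x_\alpha$, so the degree-wise surjectivity of $f$ lifts any $\beta\in\Der^k_\K(A,N)$ to some $\alpha\in\Der^k_\K(A,M)$ with $f\alpha=\beta$, by choosing preimages of $\beta(x_\alpha)$ of the correct degree in $M$ and extending via Leibniz. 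Setting $K=\ker f$, which is acyclic because $f$ is a surjective quasi-isomorphism, this degree-wise surjectivity combined with $0\to K\to M\to N\to 0$ produces a short exact sequence of complexes
\[
0\longrightarrow\Der^*_\K(A,K)\longrightarrow\Der^*_\K(A,M)\xrightarrow{f_*}\Der^*_\K(A,N)\longrightarrow 0 ,
\]
and the associated long exact cohomology sequence reduces the quasi-isomorphism property of $f_*$ to the acyclicity of $\Der^*_\K(A,K)$.

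The main obstacle is this acyclicity, which I would establish by transfinite induction on the cellular presentation $\{A_\alpha\}$. At each successor stage, restriction to $A_\alpha$ gives a short exact sequence of complexes
\[
0\longrightarrow K'_\alpha\longrightarrow\Der^*_\K(A_{\alpha+1},K)\longrightarrow\Der^*_\K(A_\alpha,K)\longrightarrow 0 ,
\]
whose kernel $K'_\alpha$ consists of derivations that vanish on $A_\alpha$ and is therefore determined by a single free choice in $K$ (the value at $x_\alpha$); moreover its differential reduces to $d_K$ alone, since $dx_\alpha\in A_\alpha$ is annihilated by any such derivation. Thus $K'_\alpha$ is a shift of the acyclic complex $K$, hence acyclic, and the inductive hypothesis then yields the acyclicity of $\Der^*_\K(A_{\alpha+1},K)$ via the long exact cohomology sequence. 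At limit ordinals $\lambda$ one uses the identification $\Der^*_\K(A_\lambda,K)=\lim_{\alpha<\lambda}\Der^*_\K(A_\alpha,K)$ together with the Milnor $\lim^1$ exact sequence, whose $\lim^1$ term vanishes because the restriction tower is surjective. Starting from $\Der^*_\K(\K,K)=0$ and passing to the colimit, this yields the acyclicity of $\Der^*_\K(A,K)$ and completes the proof.
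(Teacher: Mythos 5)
Your argument is essentially correct, but it takes a genuinely different route from the paper. You exploit the explicit structure theory of cofibrant objects ($A$ is a retract of a cellular semifree $B$ with $dx_\alpha\in B_{<\alpha}$) and then run an induction over the cell filtration, using that the kernel of each restriction map $\Der^*_{\K}(B_{\alpha+1},K)\to\Der^*_{\K}(B_\alpha,K)$ is a (product of) shift(s) of the acyclic complex $K$. The paper instead never opens up the cofibration: it encodes a degree-$0$ linear map $\alpha\colon A\to P$ as a morphism $\widetilde{\alpha}\colon A\to A\oplus C(P)$ into the trivial extension by the cone of the identity, observes that $\alpha$ is a derivation iff $\widetilde{\alpha}$ is a map of DG-algebras, and then gets both the surjectivity of $f_*$ and the acyclicity of $\Der^*_{\K}(A,K)$ purely from the left lifting property of $\K\to A$ against the trivial fibrations $A\oplus C(M)\to A\oplus C(N)$ and $A\oplus C(K)\to A\oplus K[1]$. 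What the paper's route buys is that the argument transfers verbatim to projective-cofibrant \emph{diagrams} (Lemma~\ref{lem.derivationdiagramproperties}), where no pointwise cellular description is available; your route buys a more concrete, hands-on computation but would need substantial reworking in the diagram setting.

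One step to tighten: at limit ordinals you invoke the Milnor $\lim^1$ sequence, which as usually stated applies to $\N$-indexed towers; for a limit ordinal of uncountable cofinality the vanishing of the derived limits of a surjective system is not automatic. This is repairable either by organising the cell attachment in countably many batches (adjoin at stage $n+1$ the whole set of generators whose differentials lie in stage $n$, as the small object argument does, so the tower is $\N$-indexed and each kernel is a product of shifts of $K$), or by replacing the $\lim^1$ argument with a transfinite extension-of-primitives induction: given a cocycle $\theta$ and a primitive $\eta_\alpha$ on $B_\alpha$, extend $\eta_\alpha$ arbitrarily to $B_{\alpha+1}$ and correct by a coboundary of the acyclic kernel complex, taking unions at limits. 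With either fix the proof is complete.
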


\begin{proof} Since $f\colon M[-n]\to N[-n]$ is a surjective quasi-isomorphism for every integer $n$ it is sufficient to prove that: 
\begin{enumerate}
\item $f_*\colon \Der^0_{\K}(A,M)\to \Der^0_{\K}(A,N)$ is surjective;

\item $f_*\colon H^0(\Der^*_{\K}(A,M))\to H^0(\Der^*_{\K}(A,N))$ is bijective.
\end{enumerate}

Let's denote by $C(P)=P\oplus P[1]$ the mapping cone of the identity of an $A$-module $P$, with the differential defined by the formula $d(x+s^{-1}y)=dx+y-s^{-1}dy$; notice that $C(P)$ is acyclic and the natural projection
$C(P)\to P[1]$ is a morphism of $A$-modules.

For every linear map $\alpha\colon A\to P$ of degree 0 we shall denote 
\[\widetilde{\alpha}\colon A\to A\oplus C(P),\qquad 
\widetilde{\alpha}(a)=a+\alpha(a)+s^{-1}(\alpha(da)-d\alpha(a))\,.\]
It is straightforward to check that $\widetilde{\alpha}$ is a morphism of complexes and that every morphism of complexes $A\to A\oplus C(P)$ lifting the identity on $A$ is obtained this way.
Moreover, $\alpha$ is a derivation if and only if $\widetilde{\alpha}$ is a morphism in $\CDGA_{\K}$.

Since $A\oplus C(M)\to A\oplus C(N)$, $a+x+s^{-1}y\mapsto a+f(x)+s^{-1}f(y)$, is a trivial fibration, the lifting of a derivation $\alpha\in \Der^0_{\K}(A,N)$ is obtained by taking the lifting of the 
morphism of DG-algebras $\widetilde{\alpha}\colon A\to A\oplus C(N)$. This proves the first item.

If $K$ is the kernel of $f$, then we have an exact sequence of complexes 
\[ 0\to  \Der^*_{\K}(A,K)\to \Der^*_{\K}(A,M)\to \Der^*_{\K}(A,N)\to 0\]
and  in order to prove the second item it is sufficient to show that $\Der^*_{\K}(A,K)$ is acyclic.
By the shifting degree argument it is sufficient to prove that $H^{1}(\Der^*_{\K}(A,K))=0$.
Given $\beta\in Z^{1}(\Der^*_{\K}(A,K))$, the map 
\[ \widehat{\beta}\colon A\to A\oplus K[1],\qquad \widehat{\beta}(a)=a+s^{-1}\beta(a),\]
is a morphism of DG-algebras and the proof that 
$\beta\in B^{1}(\Der^*_{\K}(A,K))$ follows immediately by considering a lifting of $\widehat{\beta}$ along the 
trivial fibration $A\oplus C(K)\to A\oplus K[1]$. 

\end{proof}

\bigskip
\section{Deformations of diagrams via projective cofibrant resolutions}
\label{sec.deformations}

Throughout this section we shall denote by $\sD$ a fixed small category. For every 
category $\bM$ we shall denote by $\Fun(\sD,\bM)$ the category of diagrams 
$\sD\to \bM$. For every local Artin $\K$-algebra $A$ with residue field $\K$ we shall denote 
by $\Alg_A$ the category of unitary commutative $A$-algebras. 
For simplicity of notation, if 
\[P_\pallino\in \Fun(\sD,\Alg_A),\qquad \sD\ni a\mapsto P_a\,,\]
is a diagram of $A$-algebras and $A\to B$ is a morphism of algebras, we shall denote $P_\pallino\otimes_AB$ the diagram 
$(P_\pallino\otimes_AB)_a=P_a\otimes_AB$, $a\in \sD$.

Here we are interested in studying the deformation theory of a diagram 
$S_\pallino\colon \sD\to \mathbf{Alg}_{\K}$ of unitary commutative algebras. 

\begin{definition} A \emph{deformation} over $A\in \Art_{\K}$ of a diagram $S_\pallino\colon \sD\to \mathbf{Alg}_{\K}$ is the data of 
a diagram $S_{\pallino A}\colon \sD\to \mathbf{Alg}_{A}$ of \emph{flat} $A$-algebras and a morphism of diagrams of algebras
$\phi\colon S_{\pallino A} \to S_{\pallino}$ inducing an isomorphism $S_{\pallino A}\otimes_A\K\simeq S_{\pallino}$. 

Two deformations $\phi\colon S_{\pallino A} \to S_{\pallino}$ and $\psi\colon S_{\pallino A}' \to S_{\pallino}$ are isomorphic if there exists an isomorphism of diagrams of $A$-algebras $\eta\colon  S_{\pallino A}\to S_{\pallino A}'$ such that $\phi=\psi\eta$.
\end{definition}

It is possible to prove, see e.g. \cite[A.2]{Lur}, that for every small category $\sD$ there exist 
model structures on $\Fun(\sD,\CDGA_{\K}^{\le 0})$ and $\Fun(\sD,\CDGA_{\K})$, called 
\emph{projective model structures}, such that a morphism 
of diagrams $F\to G$ is a weak equivalence (resp.: fibration) if and only if 
$F_a\to G_a$ is a  weak equivalence (resp.: fibration) for every $a\in \sD$. 
The same argument used in Remark~\ref{rem.samecofibrations} shows that a morphism 
$f$ in $\Fun(\sD,\CDGA_{\K}^{\le 0})$ is a weak equivalence, cofibration, trivial fibration 
in $\Fun(\sD,\CDGA_{\K}^{\le 0})$ 
if and only if it is a  weak equivalence, cofibration, trivial fibration 
in $\Fun(\sD,\CDGA_{\K})$, respectively.

The notions of module and derivation extend naturally to the context of diagrams. 
For every diagram $R_{\pallino}\colon \sD\to  \CDGA_{\K}^{\le 0}$ the DG-Lie algebra of derivations is 
\[ \Der^*_{\K}(R_\pallino,R_\pallino)=\left\{\{\alpha_a\}\in \prod_{a\in \sD}\Der_{\K}^*(R_a,R_a)\mid
\alpha_bR_f=R_f\alpha_a,\; \forall\; a\xrightarrow{f}b\right\}\,.\]
It is plain that $\Der^*_{\K}(R_\pallino,R_\pallino)$ is a DG-Lie subalgebra of  
$\prod_{a\in \sD}\Der_{\K}^*(R_a,R_a)$.

An $R_\pallino$-module $M_\pallino$
is a diagram of differential graded vector spaces over $\sD$ such that 
$M_a$ is an  $R_a$-module  for every $a\in\sD$
and, for every arrow $a\xrightarrow{f}b$ in $\sD$, the map $M_f\colon M_a\to M_b$
is a morphism of $R_a$-modules, where $M_b$ is considered as a $R_a$-module via the morphism of DG-algebras 
$R_f\colon R_a\to R_b$. A morphism $g\colon M_\pallino\to N_\pallino$ of $R_\pallino$-modules is a morphism of diagrams of DG-vector spaces such that $g_a\colon M_a\to N_a$ is a morphism of $R_a$-modules for every $a\in \sD$.

%
%

The differential graded vector space of derivations is 
\[\Der_{\K}^*(R_\pallino,M_\pallino)=\left\{\{\alpha_a\}\in \prod_{a\in \sD}\Der_{\K}^*(R_a,M_a)\mid
\alpha_bR_f=M_f\alpha_a,\; \forall\; a\xrightarrow{f}b\right\}\,.\]

The same argument used in the proof of Lemma~\ref{lem.derivationproperties} works, mutatis mutandis, also for diagrams and gives the following result.

\begin{lemma}\label{lem.derivationdiagramproperties} 
Let $R_\pallino\in \Fun(\sD,\CDGA_{\K})$ be a projective cofibrant diagram and $f\colon M_\pallino\to N_\pallino$ a 
morphism of $R_\pallino$-modules such that $f_a\colon M_a\to N_a$ is a 
surjective quasi-isomorphism for every  $a\in \sD$. Then the map
\[ f_*\colon \Der^*_{\K}(R_\pallino,M_\pallino)\to \Der^*_{\K}(R_\pallino,N_\pallino),\qquad \alpha\mapsto f\alpha\,,\]
is a surjective quasi-isomorphism.
\end{lemma}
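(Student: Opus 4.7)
The plan is to adapt the proof of Lemma~\ref{lem.derivationproperties} to the diagrammatic setting, replacing \emph{cofibrant algebra} by \emph{projective cofibrant diagram} and replacing liftings in $\CDGA_{\K}$ by liftings in $\Fun(\sD,\CDGA_{\K})$ with respect to the projective model structure. First I observe that all the constructions used in that proof extend objectwise to diagrams: for an $R_\pallino$-module $P_\pallino$, the trivial extension $R_\pallino\oplus P_\pallino$, the shift $P_\pallino[-n]$ and the mapping cone $C(P_\pallino)$ with $C(P_\pallino)_a=P_a\oplus P_a[1]$ belong naturally to $\Fun(\sD,\CDGA_{\K})$ or to the category of $R_\pallino$-modules, and the obvious projections are morphisms of diagrams. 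Since $f\colon M_\pallino[-n]\to N_\pallino[-n]$ remains a levelwise surjective quasi-isomorphism for every $n$, the shift argument reduces the statement to proving that (1) $f_*\colon \Der^0_{\K}(R_\pallino,M_\pallino)\to \Der^0_{\K}(R_\pallino,N_\pallino)$ is surjective, and (2) $f_*\colon H^0(\Der^*_{\K}(R_\pallino,M_\pallino))\to H^0(\Der^*_{\K}(R_\pallino,N_\pallino))$ is bijective.

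For (1), given $\alpha\in \Der^0_{\K}(R_\pallino,N_\pallino)$, the collection $\{\widetilde{\alpha_a}\}$ defined as in the proof of Lemma~\ref{lem.derivationproperties} assembles into a morphism $\widetilde{\alpha}\colon R_\pallino\to R_\pallino\oplus C(N_\pallino)$ in $\Fun(\sD,\CDGA_{\K})$, precisely because the compatibility of $\alpha$ with the structural maps $R_f$ translates into commutativity of $\widetilde{\alpha}$ with the structural maps of $R_\pallino\oplus C(N_\pallino)$. The projection $R_\pallino\oplus C(M_\pallino)\to R_\pallino\oplus C(N_\pallino)$ is a levelwise trivial fibration, hence a trivial fibration in the projective model structure; since $R_\pallino$ is projective cofibrant, a lift $R_\pallino\to R_\pallino\oplus C(M_\pallino)$ exists in $\Fun(\sD,\CDGA_{\K})$ and its component in $C(M_\pallino)$ yields a derivation of diagrams lifting $\alpha$.

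For (2), applying (1) with arbitrary degree shifts gives surjectivity of $f_*$ in all degrees, so denoting by $K_\pallino$ the kernel of $f$ we obtain a short exact sequence of complexes
\[0\to \Der^*_{\K}(R_\pallino,K_\pallino)\to \Der^*_{\K}(R_\pallino,M_\pallino)\to \Der^*_{\K}(R_\pallino,N_\pallino)\to 0\,.\]
By shifting once more it suffices to prove $H^1(\Der^*_{\K}(R_\pallino,K_\pallino))=0$. For $\beta\in Z^1(\Der^*_{\K}(R_\pallino,K_\pallino))$, the formula $\widehat{\beta}(a)=a+s^{-1}\beta(a)$ defines a morphism $\widehat{\beta}\colon R_\pallino\to R_\pallino\oplus K_\pallino[1]$ in $\Fun(\sD,\CDGA_{\K})$; lifting it along the levelwise trivial fibration $R_\pallino\oplus C(K_\pallino)\to R_\pallino\oplus K_\pallino[1]$ by projective cofibrancy of $R_\pallino$ exhibits $\beta$ as a coboundary.

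The main point worth checking carefully is that the projective trivial fibrations in $\Fun(\sD,\CDGA_{\K})$ genuinely coincide with the levelwise trivial fibrations, so that the single-object lifting argument translates into a diagram-level one, and that liftings obtained as morphisms of diagrams automatically yield elements of $\Der^*_{\K}(R_\pallino,-)$ rather than merely pointwise derivations: the first fact is recalled in the paragraph preceding the lemma, while the second is immediate from the explicit formulas for $\widetilde{\alpha}$ and $\widehat{\beta}$ together with the compatibility of the projection $R_\pallino\oplus P_\pallino\to R_\pallino$ with the structural maps.
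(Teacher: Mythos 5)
Your proof is correct and is essentially the paper's own argument: the paper simply states that the proof of Lemma~\ref{lem.derivationproperties} works \emph{mutatis mutandis} for diagrams, and your write-up spells out exactly the adaptations that make this work (objectwise constructions of $C(P_\pallino)$ and the trivial extensions, the identification of levelwise trivial fibrations with projective trivial fibrations, and the observation that a diagram-level lift of $\widetilde{\alpha}$ or $\widehat{\beta}$ yields a derivation of diagrams rather than merely a pointwise family).
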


The main goal of this section is to prove the following theorem.

\begin{theorem}\label{thm.main} 
Let $\sD$ be a small category and 
$S_\pallino\colon \sD\to \mathbf{Alg}_{\K}$ a diagram of unitary commutative algebras. 
Let $R_\pallino\to S_\pallino$ be a cofibrant replacement in 
$\Fun(\sD,\CDGA_{\K}^{\le 0})$ with respect to the projective model structure. Then 
the DG-Lie algebra $\Der_{\K}^*(R_\pallino,R_\pallino)$ controls the deformations of 
$S_\pallino$.
\end{theorem}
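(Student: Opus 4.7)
Write $L = \Der_{\K}^*(R_\pallino, R_\pallino)$. The plan is to exhibit a natural isomorphism $\Phi \colon \Def_L \to \Def_{S_\pallino}$ of functors $\Art_{\K} \to \Set$, following the classical Tate-resolution strategy for a single algebra \cite{ManettiDGLA,MM} and promoting it to the diagram setting by means of the technical input of Lemmas~\ref{lem.nakayama} and~\ref{lem.derivationdiagramproperties}.

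I would define $\Phi$ first at the level of Maurer-Cartan elements. Fix $A \in \Art_{\K}$ with maximal ideal $\mathfrak{m}_A$, and let $\xi \in \MC(L \otimes \mathfrak{m}_A)$. On the diagram of graded $A$-algebras $R_\pallino \otimes A$ consider the perturbed differential $d + \xi$. The diagram condition on $\xi$ makes each structure map $R_f \otimes \id_A$ into a morphism of DG-algebras, the Maurer-Cartan equation yields $(d+\xi)^2 = 0$, and each $R_a \otimes A$ is tautologically flat over $A$. Since $R_\pallino \to S_\pallino$ is a trivial fibration one has $H^i(R_a) = 0$ for $i < 0$ and $H^0(R_a) = S_a$, so Lemma~\ref{lem.nakayama}(3) applied pointwise gives that $\Phi(\xi) := H^0(R_\pallino \otimes A, d + \xi)$ is a diagram of flat $A$-algebras with $\Phi(\xi) \otimes_A \K \cong S_\pallino$, hence a bona fide element of $\Def_{S_\pallino}(A)$. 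A gauge equivalence $\lambda \in L^0 \otimes \mathfrak{m}_A$ exponentiates to an automorphism of the graded diagram $R_\pallino \otimes A$ conjugating $d + \xi_0$ to $d + \xi_1$; passing to $H^0$ induces an isomorphism of deformations, so $\Phi$ descends to a natural transformation $\Def_L \to \Def_{S_\pallino}$.

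To prove that $\Phi$ is bijective I would proceed by induction on a filtration of $A$ by small extensions $0 \to (t) \to A_{n+1} \to A_n \to 0$ with $(t) \cdot \mathfrak{m}_{A_{n+1}} = 0$. For surjectivity, assume we are given a deformation $S_{\pallino A_{n+1}}$ and an MC element $\xi_n$ over $A_n$ realizing its reduction $S_{\pallino A_n}$; one seeks an extension $\xi_{n+1} \in L^1 \otimes \mathfrak{m}_{A_{n+1}}$ of $\xi_n$ satisfying the MC equation and with $\Phi(\xi_{n+1}) \cong S_{\pallino A_{n+1}}$. The freedom in the choice of such a lift lies in $Z^1(L) \otimes (t)$ and the set of $A_{n+1}$-deformations lifting $S_{\pallino A_n}$ is controlled by $H^1$ of the same complex, via Lemma~\ref{lem.derivationdiagramproperties} combined with Lemma~\ref{lem.nakayama}(2) (used to compare $R_\pallino \otimes A_{n+1}$ with any other flat resolution of $S_{\pallino A_{n+1}}$). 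For injectivity, given two MC elements $\xi_0, \xi_1$ with $\Phi(\xi_0) \cong \Phi(\xi_1)$, I would lift the isomorphism of deformations inductively to an automorphism of graded $A$-algebra diagrams $R_\pallino \otimes A \to R_\pallino \otimes A$ intertwining the two perturbed differentials and equal to the identity modulo $\mathfrak{m}_A$; writing such an automorphism as $e^{\lambda}$ with $\lambda \in L^0 \otimes \mathfrak{m}_A$ yields the required gauge equivalence, and the inductive lift again relies on Lemma~\ref{lem.derivationdiagramproperties}.

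The main obstacle throughout both inductions is the diagram compatibility: one must build $\xi_{n+1}$ (respectively the gauge $\lambda$) not just inside the product $\prod_{a} \Der_{\K}^*(R_a, R_a)$ but in the subcomplex $L = \Der_{\K}^*(R_\pallino, R_\pallino)$ of derivations commuting with every $R_f$. This is precisely what the projective cofibrancy of $R_\pallino$ secures: Lemma~\ref{lem.derivationdiagramproperties} guarantees that the natural map from $R_\pallino$-module derivations to componentwise derivations behaves well with respect to surjective quasi-isomorphisms of $R_\pallino$-modules, so the obstructions and the ambiguities for diagram-compatible lifting coincide with their componentwise counterparts and vanish in the right places.
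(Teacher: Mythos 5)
Your proposal is correct and follows essentially the same route as the paper: the map $\xi\mapsto H^0(R_\pallino\otimes A,d+\xi)$, well-definedness via Lemma~\ref{lem.nakayama}, and bijectivity by induction along small extensions using Lemma~\ref{lem.derivationdiagramproperties}. The only organizational difference is that the paper packages the surjectivity/injectivity induction into a separate statement (Lemma~\ref{lem.main}, on lifting cofibrant replacements along $A\to\K$ together with its unicity), and makes explicit the graded-algebra lifting step (Lemma~\ref{lem.lift}) that your sketch leaves implicit when you lift morphisms and isomorphisms of deformations to the resolutions.
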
 

In other words, the functor of isomorphism classes of deformations of $S_{\pallino}$ is isomorphic to the functor of Maurer-Cartan solutions in $\Der_{\K}^*(R_\pallino,R_\pallino)$ modulus gauge equivalence. We shall prove Theorem~\ref{thm.main} after a certain number of preliminary results.
Unless otherwise specified 
we always equip the categories $\Fun(\sD,\CDGA_{\K}^{\le 0})$ and $\Fun(\sD,\CDGA_{\K})$ with the projective model structure. Therefore $R_\pallino\to S_\pallino$ is a cofibrant resolution also in the model category 
$\Fun(\sD,\CDGA_{\K})$ and we can apply Lemma~\ref{lem.derivationdiagramproperties} to the diagram $R_\pallino$.

\begin{lemma}\label{lem.lift}
Consider a commutative square of solid arrows
\[ \xymatrix{	P_\pallino\ar@{->}[r]^{g} \ar@{->}[d]_{i} & E_\pallino\ar@{->}[d]^{p} \\
C_\pallino\ar@{->}[r]_{f} \ar@{-->}[ur] & D_\pallino	} \]
in $\Fun(\sD,\CDGA_{\K}^{\le 0})$. 
If $i$ is a cofibration and $p_a\colon E_a\to D_a$ is surjective for every $a\in \sD$, 
then there exists a  lifting $\gamma\colon C_\pallino\to E_\pallino$ in the category 
of diagrams of graded algebras.

\end{lemma}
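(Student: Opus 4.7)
The plan is to exploit the cellular description of projective cofibrations together with the observation that lifting against pointwise surjections is easy when one forgets differentials. The argument proceeds in three broad steps.

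First, by general properties of cofibrantly generated model categories (cf.~\cite[A.2]{Lur}), every projective cofibration in $\Fun(\sD,\CDGA_{\K}^{\le 0})$ is a retract of a transfinite composition of pushouts of \emph{generating} projective cofibrations, which have the form $F_a(j)$ with $a\in\sD$, $F_a\colon\CDGA_{\K}^{\le 0}\to\Fun(\sD,\CDGA_{\K}^{\le 0})$ the left adjoint to evaluation at $a$, and $j$ a generating cofibration of $\CDGA_{\K}^{\le 0}$. By Theorem~\ref{thm.modelcdga}, one may take $j$ to be an elementary semifree extension, namely a map that adjoins one or two free generators in a fixed non-positive degree.

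Second, the class of morphisms in the category of graded algebra diagrams $\Fun(\sD,\mathbf{CGA}_{\K}^{\le 0})$ which have the left lifting property with respect to $p$ (regarded as a morphism of graded algebra diagrams by forgetting differentials) is closed under retracts by Lemma~\ref{ret-sol}, and routinely under pushouts and transfinite compositions. Hence it suffices to verify this lifting property for each generating cofibration $F_a(j)$.

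Third, the adjunction $F_a\dashv\operatorname{ev}_a$ persists in the graded context, so the lifting problem for $F_a(j)$ against $p$ in graded algebra diagrams reduces to the lifting problem for the underlying graded map of $j$ against $p_a\colon E_a\to D_a$ in $\mathbf{CGA}_{\K}^{\le 0}$. Since $j$ is, at the graded level, a polynomial extension of the form $K\to K[x]$ or $K\to K[x,y]$ in one or two free generators of fixed non-positive degree, one obtains the required lift by choosing preimages under the surjective map $p_a$ of the images in $D_a$ of the adjoined generators. The universal property of the polynomial extension in $\mathbf{CGA}_{\K}$ then extends these choices uniquely to a morphism of graded algebras. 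The main subtlety is conceptual rather than computational: one must invoke the cellular structure of projective cofibrations and carefully pass between the differential graded and purely graded categories. The explicit lifting step is elementary and crucially uses that $p_a$ is surjective in \emph{all} degrees, which is a strictly stronger hypothesis than $p$ being a fibration of DG-algebras (where surjectivity would only be required in negative degrees), so that generators of degree $0$ can also be lifted.
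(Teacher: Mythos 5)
Your argument is correct, but it follows a genuinely different route from the paper. You decompose the projective cofibration $i$ cellularly: a retract of a transfinite composition of pushouts of generating cofibrations $F_a(j)$, reduce by the adjunction $F_a\dashv \operatorname{ev}_a$ to lifting $j$ against $p_a$ at the graded level, and conclude because $j$ is a polynomial extension and $p_a$ is surjective in all degrees (you rightly stress that full surjectivity, not just in negative degrees, is what allows degree-$0$ generators to be lifted). The paper instead avoids any cellular description: it tensors with the acyclic algebra $\K[d^{-1}]$ ($\overline{d^{-1}}=-1$, $d(d^{-1})=1$), observes that $p\otimes\id\colon E_\pallino\otimes\K[d^{-1}]\to D_\pallino\otimes\K[d^{-1}]$ is then a pointwise surjective quasi-isomorphism, i.e.\ a projective trivial fibration, lifts against it using only the defining left lifting property of the cofibration $i$, and finally composes with $\id\otimes\beta$, where $\beta\colon\K[d^{-1}]\to\K$ is a morphism of graded (not DG-) algebras splitting the unit. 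The paper's trick is shorter, needs no knowledge of the generating cofibrations, and is in the spirit of its stated preference for avoiding transfinite cell decompositions; your approach is more explicit but leans on two pieces of machinery the paper only cites rather than proves, namely that the projective structure is cofibrantly generated with generators $F_a(j)$ and that the generators $j$ of $\CDGA_{\K}^{\le 0}$ may be chosen to be polynomial extensions at the graded level (Theorem~\ref{thm.modelcdga} alone gives cofibrations as retracts of semifree extensions, not a generating set). Both of these are standard and consistent with the reference to Lurie, so I regard your proof as complete, just differently grounded.
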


\begin{proof} Consider the contractible polynomial algebra  $\K[d^{-1}]\in\CDGA_{\K}^{\le 0}$, where 
$\overline{d^{-1}}=-1$ and $d(d^{-1})=1$, and notice that the natural inclusion 
$\alpha\colon \K\to \K[d^{-1}]$ is a morphism of DG-algebras, while the natural projection $\beta\colon \K[d^{-1}]\to \K$ is a morphism of graded algebras; moreover $\beta\alpha$ is the identity on $\K$.
Now, the morphism
\[ E_\pallino\otimes_{\K}\K[d^{-1}]\xrightarrow{p\otimes\id} D_\pallino\otimes_{\K} \K[d^{-1}] \]
is a trivial fibration, and so there exists a commutative square
\[ \xymatrix{	P_\pallino\ar@{->}[rr]^-{(\id\otimes\alpha) g} \ar@{->}[d]_{i} & & E_\pallino\otimes_{\K}\K[d^{-1}] \ar@{->}[d]^{p\otimes \id} \\
C_\pallino\ar@{->}[rr]_-{(\id\otimes\alpha) f} \ar@{->}[urr]^{\varphi} & & D_\pallino\otimes_{\K}\K[d^{-1}]	} \]
in $\Fun(\sD,\CDGA_{\K}^{\le 0})$. It is now sufficient to take $\gamma = (\id\otimes\beta)\varphi$.
\end{proof}

\begin{lemma}\label{lem.main} 
Let $A\in \Art_{\K}$ and let $N_{\pallino A}\colon \sD\to \CDGA_{A}^{\le 0}$ be a diagram of flat $A$-algebras. 
Then every cofibrant replacement $f\colon P_\pallino \to N_{\pallino A}\otimes_A\K$ 
in $\Fun(\sD,\CDGA_{\K}^{\le 0})$
lifts to an $A$-linear  differential on $P_\pallino\otimes A$ and to a trivial fibration 
$P_\pallino\otimes A\to N_{\pallino A}$ in the category 
$\Fun(\sD,\CDGA_{A}^{\le 0})$.
The above lifting is unique up to $A$-linear algebra isomorphisms of   $P_\pallino\otimes A$ lifting 
the identity on $P_\pallino$. 
\end{lemma}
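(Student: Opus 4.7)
The plan is to proceed by induction on $\dim_\K A$, reducing at each step to a small extension $0\to I\to A\to \bar A\to 0$ with $\mathfrak{m}_A I=0$. The base case $A=\K$ is trivial ($d=d_P$, $\pi=f$). Assume inductively a trivial fibration $\bar\pi\colon (P_\pallino\otimes\bar A,\bar d)\to N_{\pallino A}\otimes_A\bar A$ in $\Fun(\sD,\CDGA_{\bar A}^{\le 0})$, and seek to extend it to~$A$.

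For existence, I would first produce a graded-algebra lift. Apply Lemma~\ref{lem.lift} to the square
\[ \xymatrix{ \K\ar[r]\ar[d] & N_{\pallino A}\ar[d] \\ P_\pallino\ar[r] & N_{\pallino A}\otimes_A\bar A } \]
whose left vertical arrow is a cofibration (as $P_\pallino$ is projectively cofibrant), whose right vertical arrow is pointwise surjective (as $A\twoheadrightarrow\bar A$ is), and whose bottom arrow is $\bar\pi$ precomposed with the inclusion $P_\pallino\hookrightarrow P_\pallino\otimes\bar A$. The lemma produces a morphism $\gamma\colon P_\pallino\to N_{\pallino A}$ of diagrams of graded algebras reducing modulo $I$ to this composition; extend $\gamma$ by $A$-linearity to $\pi\colon P_\pallino\otimes A\to N_{\pallino A}$. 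Because $\mathfrak{m}_A$ is nilpotent, applying Nakayama's lemma to the cokernel of $\pi$ degree by degree shows that $\pi$ is pointwise surjective in every degree.

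Next, I would lift the differential. Using a $\K$-linear splitting $A\cong\bar A\oplus I$ and the fact that $\mathfrak{m}_A I=0$, pick an $A$-linear degree-$1$ derivation $d^{(0)}$ of $P_\pallino\otimes A$ agreeing with $\bar d$ on $P_\pallino\otimes\bar A$ and with $d_P\otimes \id_I$ on $P_\pallino\otimes I$. One seeks a correction $d=d^{(0)}+\eta$ with $\eta\in \Der^1_\K(P_\pallino,P_\pallino\otimes I)$ such that $d^2=0$ and $\pi d=d_N\pi$. Since $I^2=0$, $\eta^2=0$ automatically, and the two conditions reduce to the linear equations
\[ [d_P,\eta]=-(d^{(0)})^2,\qquad \pi\circ\eta=-(d_N\pi-\pi d^{(0)}) \]
in the complexes $\Der^*_\K(P_\pallino,P_\pallino\otimes I)$ and $\Der^*_\K(P_\pallino, I\cdot N_{\pallino A})$ respectively. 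The restriction of $\pi$ to a map of $P_\pallino$-modules $P_\pallino\otimes I\to I\cdot N_{\pallino A}$ is a pointwise surjective quasi-isomorphism, both sides being canonically $(-)\otimes_\K I$ applied to quasi-isomorphic modules; Lemma~\ref{lem.derivationdiagramproperties} then upgrades it to a surjective quasi-isomorphism of derivation complexes. The standard obstruction check shows the two right-hand sides are compatible cocycles whose classes vanish, and simultaneous solvability for $\eta$ follows. With $d$ thus defined, pointwise surjectivity of $\pi$ together with Lemma~\ref{lem.nakayama}(2), applied to the flat $A$-algebras $P_\pallino\otimes A$ and $N_{\pallino A}$, implies $\pi$ is a pointwise quasi-isomorphism, hence a trivial fibration in $\Fun(\sD,\CDGA_A^{\le 0})$.

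Uniqueness follows by the same inductive scheme: given two lifts, one seeks an $A$-linear algebra automorphism $\phi=\id+\xi$ of $P_\pallino\otimes A$ with $\xi$ a degree-$0$ derivation valued in $P_\pallino\otimes I$, and the conditions that $\phi$ intertwine the two differentials and the two projections to $N_{\pallino A}$ translate, modulo $I^2=0$, into analogous linear equations resolved by the same application of Lemma~\ref{lem.derivationdiagramproperties}. The main obstacle throughout is functoriality in $\sD$: every derivation, lift, and homotopy must respect arrows of the diagram rather than being chosen pointwise. This is exactly what the projective cofibrancy of $P_\pallino$ ensures, via Lemma~\ref{lem.derivationdiagramproperties}, and is the non-trivial ingredient making the whole scheme work.
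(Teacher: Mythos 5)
Your proof is correct and follows essentially the same route as the paper's: induction on small extensions, a graded-algebra lift of the projection obtained from Lemma~\ref{lem.lift}, and the surjective quasi-isomorphism of derivation complexes supplied by Lemma~\ref{lem.derivationdiagramproperties} (with its acyclic kernel) to solve the linear equations for the differential correction and for the uniqueness automorphism. The differences are cosmetic: you allow a general small extension and keep the obstruction term $(d^{(0)})^2$ explicit, and you deduce that $\pi$ is a quasi-isomorphism from Lemma~\ref{lem.nakayama}(2) rather than from the exact rows; the substance is identical.
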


\begin{proof}
{\sc Existence.}
We proceed by induction on the length of the Artin ring. Since for $A=\K$ there is nothing to prove, 
we may assume $A\in \Art_{\K}$ of length $l(A)>1$ and then there exists a non-trivial element $t\in A$ annihilated by the maximal ideal $\mathfrak{m}_A$, giving a small extension
\[ 0\to \K \xrightarrow{\;\cdot t\;} A\to B \to 0,\qquad l(B)=l(A)-1\,.\]
By induction there exist a $B$-linear differential on $P_\pallino\otimes_{\K}B$ and a commutative square in $\Fun(\sD,\CDGA_{B}^{\le 0})$:
\[ \xymatrix{	P_\pallino\otimes_{\K}B \ar@{->}[r]^{q} \ar@{->}[d] & N_{\pallino A}\otimes_AB \ar@{->}[d] 
\\
P_\pallino\ar@{->}[r]^{f} & N_{\pallino A}\otimes_A\K	} \]
In view of the embedding $\K\subset B$, the $B$-linear morphism of diagrams $q$ is uniquely determined by its restriction 
$q_{|P_\pallino}\colon P_\pallino\to N_{\pallino A}\otimes_AB$, which is a morphism in 
$\Fun(\sD,\CDGA_{\K}^{\le 0})$. By Lemma~\ref{lem.lift} we can lift $q_{|P_\pallino}$ to a morphism of diagrams of graded algebras $P_\pallino\to N_{\pallino A}$ and then we get a commutative diagram with (pointwise) exact rows
\[ \xymatrix{0\ar[r]&P_\pallino\ar[r]^t\ar[d]^f&P_\pallino\otimes_{\K}A\ar[r]^{\pi}\ar[d]^p&
P_\pallino\otimes_{\K}B\ar[r]\ar[d]^q&0\\
0\ar[r]&N_{\pallino A}\otimes_A\K\ar[r]^t&N_{\pallino A}\ar[r]&
N_{\pallino A}\otimes_{A}B\ar[r]&0}\]
Since $f$ and $q$ are trivial fibrations, to conclude the proof it is sufficient to show that there exists a lifting of the differential of $P_\pallino\otimes_{\K}B$ to an $A$-linear differential of 
$P_\pallino\otimes_{\K}A$ making $p$ a morphism of diagrams of DG-algebras.
Let $d=\{ d_a\colon P_a\to P_a\mid a\in \sD\}$ be the differential of $P_\pallino$, since the $A$-linear derivations of $P_\pallino\otimes_{\K}A$  of degree 1 lifting $d$ are of type 
$d+\eta$ with $\eta\in \Der^1_{\K}(P_\pallino,P_\pallino)\otimes\mathfrak{m}_A$, we can lift 
the differential of $P_\pallino\otimes_{\K}B$ to an $A$-linear derivation  
$\delta\colon P_\pallino\otimes_{\K}A\to P_\pallino\otimes_{\K}A$ of degree 1.

Now it is sufficient to prove that there exists a derivation 
in $\xi \in \Der^1_{\K}(P_\pallino,P_\pallino)$ such that: 
\begin{enumerate}
\item $(\delta + t\xi)^2 = 0$,
\item $p(\delta + t\xi) = d_{N_\pallino}\, p$,
\end{enumerate}
and consider $\delta + t\xi$ as the differential of $P_\pallino\otimes_{\K}A$. Since $t$ is annihilated by the maximal ideal $\mathfrak{m}_A$, the condition $(\delta + t\xi)^2 = 0$ is equivalent to 
$d\xi+\xi d=0$, i.e., the above condition (1) holds if and only if 
$\xi\in Z^1(\Der^*_{\K}(P_\pallino,P_\pallino))$.
The map  $\psi=d_{N_\pallino}\, p-p\delta$ is $A$-linear and its image is contained in 
$t(N_{\pallino A}\otimes_A\K)$, hence it factors to a derivation
$\phi\in Z^1(\Der^*_{\K}(P_\pallino,N_{\pallino A}\otimes_A\K;f))$ and the above condition (2) is equivalent to 
$\phi=f\xi$.
It is now sufficient to observe that since $f$ is a trivial fibration, by Lemma~\ref{lem.derivationdiagramproperties} the morphism 
\[ f\colon \Der^*_{\K}(P_\pallino,P_\pallino)\to \Der^*_{\K}(P_\pallino,N_{\pallino A}\otimes_A\K;f)\]
is a surjective quasi-isomorphism and therefore  
\[ f\colon Z^1(\Der^*_{\K}(P_\pallino,P_\pallino))\to Z^1(\Der^*_{\K}(P_\pallino,N_{\pallino A}\otimes_A\K;f))\]
is a surjective map.\\

{\sc Unicity.} Let $\delta,\delta'$ be two $A$ linear differentials on $P_\pallino\otimes_{\K}A$ lifting 
the differential $d$ on $P_\pallino$ and let 
\[ p\colon (P_\pallino\otimes_{\K}A,\delta)\to N,\qquad q\colon (P_\pallino\otimes_{\K}A,\delta')\to N,\]
be two morphisms in  $\Fun(\sD,\CDGA_{A}^{\le 0})$ lifting the trivial fibration 
$f\colon P_\pallino\to N_{\pallino A}\otimes_A\K$. We need to prove that there exists an isomorphism of diagrams of 
differential graded  $A$-algebras $\phi\colon (P_\pallino\otimes_{\K}A,\delta)\to (P_\pallino\otimes_{\K}A,\delta')$ such that $p=q\phi$.

By induction on the length we can assume that there exists 
an isomorphism of diagrams of differential graded 
$B$-algebras $\phi'\colon (P_\pallino\otimes_{\K}B,\delta)\to (P_\pallino\otimes_{\K}B,\delta')$ such that $p=q\phi'$. By Lemma~\ref{lem.lift} we can lift $\phi'$ to an isomorphism 
of diagrams of 
 graded  $A$-algebras 
 $\psi'\colon (P_\pallino\otimes_{\K}A,\delta)\to (P_\pallino\otimes_{\K}A,\delta')$; therefore, replacing $\delta'$ with $(\psi')^{-1}\delta'(\psi)$ and $q$ with $q\psi'$ if necessary, it is not 
 restrictive to assume $\phi'$ equal to the identity.
 The derivation $p-q\colon P_\pallino\to t(N_{\pallino A}\otimes_A\K)$ can be lifted to a derivation
 $\alpha\in \Der^0_{\K}(P_\pallino,P_\pallino)$ and then, replacing $q$ with 
 $q(\id+t\alpha)$ and $\delta'$ with $(\id-t\alpha)\delta'(\id+t\alpha)$ if necessary, it is not restrictive to assume $p=q$.
 This implies in particular that $\delta'=\delta+t\xi$, for some 
 $\xi\in Z^1(\Der^*_{\K}(P_\pallino,P_\pallino))$. Since 
 the kernel of $f\colon \Der^*_{\K}(P_\pallino,P_\pallino)\to \Der^*_{\K}(P_\pallino,N_{\pallino A}\otimes_A\K;f))$ is acyclic and 
 $p\delta=p\delta'$, we have $f\xi=0$ and therefore $\xi=[d,\alpha]$ for some 
 $\alpha\in \Der^0_{\K}(P_\pallino,P_\pallino)$ such that $f\alpha=0$.
 Now $e^{t\alpha}$ is the required isomorphism.  
\end{proof}

\begin{proof}[of Theorem~\ref{thm.main}]
We assume that the reader has a certain familiarity with the theory of deformation functors 
associated to DG-Lie algebras; the basic facts exposed in \cite{ManettiDGLA,ManettiSeattle} are sufficient for our needs.

Let $\sD$ be a small category and 
$S_\pallino\colon \sD\to \mathbf{Alg}_{\K}$ a diagram of unitary commutative algebras. 
Let $R_\pallino\to S_\pallino$ be a cofibrant replacement in 
$\Fun(\sD,\CDGA_{\K}^{\le 0})$ with respect to the projective model structure and consider the DG-Lie algebra $L=\Der_{\K}^*(R_\pallino,R_\pallino)$.

Denoting by $\Def_{S_\pallino}\colon \Art_{\K}\to \Set$ the functor of isomorphism classes of deformations we want to describe an isomorphism
\[ \phi\colon \Def_L\to \Def_{S_\pallino}\,.\] 

Denoting by $d\in \Der^1_{\K}(R_\pallino, R_\pallino)$ the differential of $R_\pallino$, 
for every $A\in \Art_{\K}$ with maximal ideal $\mathfrak{m}_A$, a Maurer-Cartan element 
\[ \xi\in \MC_L(A)=\left\{x\in L^1\otimes\mathfrak{m}_A\;\middle|\; d_Lx+\frac{1}{2}[x,x]=0
\right\}\]
is exactly a derivation $\xi\in \Der^1_{\K}(R_\pallino, R_\pallino\otimes \mathfrak{m}_A)$ such that $(R_\pallino\otimes A,d+\xi)$ is a flat diagram in $\Fun(\sD,\CDGA^{\le 0}_A)$.
Moreover $\xi,\eta\in  \MC_L(A)$ are gauge equivalent if and only if there exists an isomorphism of diagrams of DG-algebras $(R_\pallino\otimes A,d+\xi)\simeq (R_\pallino\otimes A,d+\eta)$, lifting the identity over $R_\pallino$.
According to Lemma~\ref{lem.nakayama} the map 
\[ \MC_L(A)\to \Def_{S_\pallino}(A),\qquad \xi\mapsto H^0(R_\pallino\otimes A,d+\xi),\]
is properly defined and factors to a natural transformation 
$\phi\colon \Def_L\to \Def_{S_\pallino}$. 
Finally Lemma~\ref{lem.main} implies immediately that $\phi$ is an isomorphism.
 \end{proof}

In the situation of Theorem~\ref{thm.main}, according to  Lemma~\ref{lem.derivationdiagramproperties}, the natural map 
$L=\Der^*_{\K}(R_\pallino, R_\pallino)\to \Der^*_{\K}(R_\pallino, S_\pallino)$ is a quasi-isomorphism of complexes, hence $H^i(L)=0$ for every $i<0$.
The $S_\pallino$-module $\Der^*_{\K}(R_\pallino, S_\pallino)$,  defined up to quasi-isomorphism, is called the \emph{tangent complex} of $S_\pallino$. Its cohomology groups are denoted by $T^i(S_\pallino)$. 
According to \cite{ManettiDGLA,ManettiSeattle}, an immediate consequence of Theorem~\ref{thm.main} is that the space of first order deformations
of the diagram $S_\pallino$ is 
$H^1(\Der^*_{\K}(R_\pallino, R_\pallino))=T^1(S_\pallino)$, and obstructions to deformations are contained in the space $H^2(\Der^*_{\K}(R_\pallino, R_\pallino))=T^2(S_\pallino)$.

\medskip

Although in principle Theorem~\ref{thm.main} gives a complete answer to our initial problem, 
for diagrams over a general small category $\sD$  it may be very difficult to concretely describe a cofibrant replacement, since projective cofibrations  are  described either as maps satisfying the left lifting property with respect to trivial fibrations, or 
as transfinite compositions of 
certain elementary cofibrations.

A possible strategy to overcome this difficulty is to give an explicit functor
of small categories $\epsilon\colon \sN\to \sD$ such that:

\begin{enumerate}

\item for every diagram $S_\pallino\in \Fun(\sD,\Alg_{\K})$, the deformation theory of $S_\pallino$ is the same as the deformation theory of $S_\pallino \circ \epsilon\in \Fun(\sN,\Alg_{\K})$;

\item cofibrations in $\Fun(\sN,\CDGA_{\K}^{\le 0})$ admit a  constructive description.

\end{enumerate}

In the next sections we follow this strategy by setting as $\epsilon$ a simplified version of the 
Bousfield-Kan approximation \cite{CS}. In our construction the category $\sN$ will be in particular a Reedy category (see Section~\ref{sec.Reedy}) and the  projective model structure in $\Fun(\sN,\CDGA_{\K}^{\le 0})$ 
will be the same as the Reedy model structure, hence with a simpler description of cofibrations.

\bigskip
\section{Simplex categories}
\label{sec.simplex}

 Let $\mathbf{\Delta}$ be the category with objects the finite ordinals $[n]= \{0, 1, \cdots, n\}$ and morphisms non-decreasing maps,  also known as the \emph{simplex category}. 
We denote by $\delta_k\colon [n-1]\to [n]$, and by 
$\sigma_k\colon [n+1]\to [n]$, $k=0,\ldots, n$, the usual face and degeneracy maps:
\[
\delta_{k}\colon [n-1]\to [n],
\qquad \delta_{k}(p)=\begin{cases}p&\text{ if }p<k\\
p+1&\text{ if }p\ge k\end{cases},\qquad k=0,\dots,n,
\]
\[
\sigma_{k}\colon [n+1]\to [n],
\qquad \sigma_{k}(p)=\begin{cases}p&\text{ if }p\le k\\
p-1&\text{ if }p>k\end{cases},\qquad k=0,\dots,n,
\]
They satisfy the cosimplicial identities:
\begin{align*}
 \sigma_i\sigma_j&=\sigma_j\sigma_{i+1} \quad\text{ for } i \geq j \\
\delta_i\delta_j&= \delta_{j+1}\delta_i \quad\text{ for } i \leq j \\
\sigma_i \delta_j &=
    \begin{cases}
      \delta_{j-1}\sigma_i, & \text{if}\ j > i+1 \\
      \Id, & \text{if}\ j =i, i+1 \\
      \delta_j \sigma_{i-1}, &\text{if}\ j < i.
    \end{cases}
    \end{align*}
    
We recall that a cosimplicial group is a functor $G\colon \mathbf{\Delta} \to \mathbf{Grp}$, $[n]\mapsto G_n$; in the sequel we shall need the 
following proposition, which is an easy generalisation of a well known result about cosimplicial 
groups, cf. \cite[Prop. X.4.9]{BK72}.

\begin{proposition}\label{prop.cosimpl}
Let $G$ be a cosimplicial group, let $n \geq  1$, and  $I\subseteq [n]$. 
Assume there are given elements 
$x_i\in G_n$, $i\in I$, such that  $\sigma_{i-1}x_j= \sigma_j x_i$ for all $i > j$ and $i,j \in I$. 
Then there exists $x \in G_{n+1} $ such that $\sigma_i x= x_i$ for all $i\in I$.
\end{proposition}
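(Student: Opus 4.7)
The plan is to prove the proposition by induction on $n\ge 1$, reducing for each fixed $n$ to the classical Bousfield--Kan statement \cite[Prop.~X.4.9]{BK72}, which is the case $I=[n]$. Given $I\subsetneq[n]$ with a compatible family $\{x_i\}_{i\in I}$, I aim to adjoin one extra index $k\in[n]\setminus I$, producing a compatible family $\{x_i\}_{i\in I\cup\{k\}}$; iterating this step one missing index at a time brings us to the full case $I=[n]$, to which the classical result applies. The small base case $n=1$ is immediate: if $I=\emptyset$ take $x=1$, and if $I=\{i\}$ take $x=\delta_i x_i$, using $\sigma_i\delta_i=\id$.

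To construct $x_k\in G_n$ satisfying the required compatibility with the existing $x_j$'s, I unpack the conditions $\sigma_j x_k=\sigma_{k-1}x_j$ (for $j\in I$, $j<k$) and $\sigma_{i-1}x_k=\sigma_k x_i$ (for $i\in I$, $i>k$). Together these are prescriptions of the form $\sigma_m x_k=z_m\in G_{n-1}$ for $m$ in a subset $M\subseteq[n-1]$, where $z_m=\sigma_{k-1}x_m$ if $m<k$ and $z_m=\sigma_k x_{m+1}$ if $m\ge k$. Finding $x_k$ with these prescribed codegeneracies is exactly an instance of the proposition one level down, and the inductive hypothesis at level $n-1$ produces such an $x_k$ provided the family $\{z_m\}_{m\in M}$ itself satisfies the compatibility $\sigma_{m-1}z_{m'}=\sigma_{m'}z_m$ for $m>m'$ in $M$.

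This compatibility check is the main obstacle and essentially the entire content of the proof. It splits into three cases according to whether both $m,m'<k$, both $m,m'\ge k$, or $m\ge k>m'$. In each case I apply the cosimplicial identities $\sigma_a\sigma_b=\sigma_b\sigma_{a+1}$ for $a\ge b$ and $\sigma_a\sigma_b=\sigma_{b-1}\sigma_a$ for $a<b$ to move a common $\sigma_{k-1}$ or $\sigma_k$ to the outside on both sides; the resulting equality then reduces to an instance of the original compatibility $\sigma_{i-1}x_j=\sigma_j x_i$ on $I$. For instance, in the mixed case $m\ge k>m'$ the swaps $\sigma_{m-1}\sigma_{k-1}=\sigma_{k-1}\sigma_m$ and $\sigma_{m'}\sigma_k=\sigma_{k-1}\sigma_{m'}$ both bring $\sigma_{k-1}$ to the front, so the target equality becomes $\sigma_m x_{m'}=\sigma_{m'}x_{m+1}$, which is the $(i,j)=(m+1,m')$ instance of the hypothesis. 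The other two cases are analogous bookkeeping, and once the check is complete the inductive step goes through.
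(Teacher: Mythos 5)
Your proof is correct, but it takes a genuinely different route from the paper's. You reduce the general $I\subseteq[n]$ to the classical case $I=[n]$ of \cite[Prop.~X.4.9]{BK72} by induction on $n$, adjoining one missing index $k$ at a time; the real content is then the verification that the prescribed values $z_m$ for $x_k$ are themselves a compatible family at level $n-1$, and your three-case check via the identities $\sigma_a\sigma_b=\sigma_b\sigma_{a+1}$ ($a\ge b$) and $\sigma_a\sigma_b=\sigma_{b-1}\sigma_a$ ($a<b$) does go through (I verified the two cases you omitted as well). The paper instead gives a single explicit recursion, $z_{i_k}=\delta_{i_k}x_{i_k}$ and $z_{i_p}=z_{i_{p+1}}\cdot(\delta_{i_p}\sigma_{i_p}z_{i_{p+1}})^{-1}\cdot(\delta_{i_p}x_{i_p})$, which reproves the Bousfield--Kan statement in the general setting rather than quoting it. The trade-off is worth noting: the paper's construction uses \emph{only} the homomorphisms $\sigma_i,\delta_i$ for $i\in I$, a fact it records explicitly and then exploits in the proof that every object of $\Fun(N(\sB)_{\le k},\cdgamz)$ is Reedy fibrant, where one is not literally in a cosimplicial group and only the degeneracies indexed by $I$ are available. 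Your argument, by contrast, genuinely needs the full cosimplicial structure (the auxiliary elements $x_k$ and the maps $\sigma_k,\delta_k$ for $k\notin I$, plus the classical result with all codegeneracies), so it proves the proposition as stated but would not transplant to that later application.
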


\begin{proof}
Writing  $I=\{i_0 < i_1 < \cdots < i_k \} \subseteq [n]$, consider 
the sequence $z_{i_0},\ldots,z_{i_k}$ defined recursively by the formula:
\[ z_{i_k}= \delta_{i_k} x_{i_k},\qquad z_{i_p}= z_{i_{p+1}} \cdot (\delta_{i_p} \sigma_{i_p} z_{i_{p+1}})^{-1} \cdot (\delta_{i_p} x_{i_p}),\quad p<k\,.\]
For later use we point out that in the construction of this sequence we have only used the group homomorphisms
\[ \sigma_i\colon G_{n+1}\to G_n,\qquad \delta_i\colon G_n\to G_{n+1},\qquad i\in I\,.\]
We claim that $x=z_{i_0}$ is the required element: we show by induction on $k-p$ that 
that $\sigma_{i_m}z_{i_p}= x_{i_m}$ for all $m \geq p$.
For $m > p$,
\begin{align*}
    \sigma_{i_m}z_{i_p} &=( \sigma_{i_m}z_{i_{p+1}}) \cdot (\sigma_{i_m}\delta_{i_p} \sigma_{i_p} z_{i_{p+1}})^{-1} \cdot (\sigma_{i_m}\delta_{i_p} x_{i_p}) \\
    &=x_{i_m} \cdot (\delta_{i_p} \sigma_{i_{m}-1} \sigma_{i_p} z_{i_{p+1}} )^{-1} \cdot (\delta_{i_p} \sigma_{i_{m}-1} x_{i_p}) \\
    &= x_{i_m} \cdot (\delta_{i_p} \sigma_{i_{p}} \sigma_{i_m} z_{i_{p+1}} )^{-1} \cdot (\delta_{i_p} \sigma_{i_{p}} x_{i_m}) \\
    &= x_{i_m} \cdot (\delta_{i_p} \sigma_{i_{p}} x_{i_m} )^{-1} \cdot (\delta_{i_p} \sigma_{i_{p}} x_{i_m}) = x_{i_m}\,,
\end{align*}
and for $m=p$
\[\begin{split}
     \sigma_{i_p}z_{i_p} &= (\sigma_{i_p}z_{i_{p+1}} )\cdot (\sigma_{i_p}\delta_{i_p} \sigma_{i_p} z_{i_{p+1}})^{-1} \cdot (\sigma_{i_p}\delta_{i_p} x_{i_p}) \\
     &= (\sigma_{i_p}z_{i_{p+1}} )\cdot (\sigma_{i_p} z_{i_{p+1}})^{-1} \cdot ( x_{i_p}) = x_{i_p}\,.
\end{split}\]
\end{proof}

The simplex category $\mathbf{\Delta}$ admits the following useful generalisation. 
Let $\sB$ be a small category and consider,
 for every $n\ge 0$,  the set  $N(\sB)_n$  of $n$-simplexes of the nerve of $\sB$: 
 every element of $N(\sB)_n$ is a string 
\[ x=[x_0\xrightarrow{\alpha_1}x_1\cdots x_{n-1}\xrightarrow{\alpha_n}x_n]\]
of $n$ morphisms of $\sB$.
The \emph{simplex category} $N(\sB)$ of $\sB$ is defined in the following way: the set of objects is the disjoint union of $N(\sB)_n$, $n\ge 0$. Given two objects
\[  x=[x_0\xrightarrow{\alpha_1}x_1\cdots x_{n-1}\xrightarrow{\alpha_n}x_n],\qquad 
y=[y_0\xrightarrow{\beta_1}y_1\cdots y_{m-1}\xrightarrow{\beta_m}y_m],\]
a morphism $f\colon x\to y$ is a monotone map $f\colon [n]\to [m]$ such that
$y_{f(i)}=x_i$ for every $i\in [n]$, and for every 
$0\le i\le n$ the morphism $\alpha_i$ is the composition of $\beta_j$, for $f(i-1)<j\le f(i)$
\[ \alpha_i\colon\, x_{i-1}=y_{f(i-1)}\xrightarrow{\beta_{f(i-1)+1}}
\cdots \xrightarrow{\qquad} y_{f(i)-1}\xrightarrow{\beta_{f(i)}}y_{f(i)}=x_i\,.\]
Notice that the equality  $f(i-1)=f(i)$ implies $x_i=x_{i-1}$ and 
$\alpha_i=\Id$. For example, 
if $[x\xrightarrow{\alpha}y\xrightarrow{\beta}z]\in N(\sB)_2$, then 
we have in the category $N(\sB)$ the following morphisms:
\[ \xymatrix{[z]\ar[d]^{\delta_0}\ar[r]^{\delta_0}&[x\xrightarrow{\beta\alpha}z]\ar[d]^{\delta_1}&\\
[y\xrightarrow{\beta}z]\ar[r]^-{\delta_0} &[x\xrightarrow{\alpha}y\xrightarrow{\beta}z]&
[x\xrightarrow{\alpha}y]\ar[l]_-{\delta_2}\\
[y\xrightarrow{\Id}y\xrightarrow{\beta}z]\ar[u]^{\sigma_0}&&\,\;[x\xrightarrow{\alpha}y\xrightarrow{\Id}y]\ar[u]^{\sigma_1}\;.}\]

Notice that the simplex category of the singleton $\sB=\{*\}$ is exactly  $\mathbf{\Delta}$.

\begin{definition} A morphism $f\colon x\to y$ in $N(\sB)$:
\[  x\in N(\sB)_n,\quad  y\in N(\sB)_m,\quad f\colon [n]\to [m],\]
is called an \emph{anchor} if $f(n)=m$.
\end{definition}

\begin{definition} 
For every $k\in \N\cup\{+\infty\}$ we shall denote by $N(\sB)_{\le k}$ the full subcategory of 
$N(\sB)$ with objects the (disjoint) union of $N(\sB)_{i}$ for $i\le k$, and by 
\[\Fun^{\anchor}(N(\sB)_{\le k},\mathbf{M})\subseteq \Fun(N(\sB)_{\le k},\mathbf{M})\] 
the full subcategory of diagrams 
$F\colon N(\sB)_{\le k}\to \mathbf{M}$ such that $F(f)$ is an isomorphism for every anchor map $f$.
\end{definition}

\begin{definition}\label{def.approximation} 
The forgetful functor $\epsilon\colon {N(\sB)} \to \sB$ is defined by setting
\[\epsilon([x_0\xrightarrow{\alpha_1}x_1\;\cdots\; x_{n-1}\xrightarrow{\alpha_n}x_n])=x_n\] 
on the objects. For any morphism 

\[ f\colon [x_0\xrightarrow{\alpha_1}x_1\,\cdots\, x_{n-1}\xrightarrow{\alpha_n}x_n]\to
[y_0\xrightarrow{\beta_1}y_1\,\cdots\, y_{m-1}\xrightarrow{\beta_m}y_m],\]
we have
\[ \epsilon(f)=\beta_m\circ\cdots\circ \beta_{f(n)+1}\colon\;  x_n=y_{f(n)}\to y_m\,.\]
\end{definition}
 
In particular, $\epsilon(f)=\Id$ for any anchor $f$.
It is clear that the composition with the functor $\epsilon$ gives, for every $k$, a natural transformation:
\begin{equation}\label{equ.epsilon}
 \epsilon^*\colon \Fun(\sB,\mathbf{M})\to \Fun^{\anchor}(N(\sB)_{\le k},\mathbf{M}),\qquad \epsilon^*(F)=F\circ \epsilon_{|N(\sB)_{\le k}}\,.
\end{equation}

If $k\ge 2$ we also have a natural transformation 
\begin{equation}\label{equ.tau}
\tau\colon \Fun^{\anchor}(N(\sB)_{\le k},\mathbf{M})\to 
\Fun(\sB,\mathbf{M})
\end{equation} 
defined in the following way: given 
$G\in \Fun^{\anchor}(N(\sB)_{\le k},\mathbf{M})$ and an object $x\in \sB$ we set
\[ \tau(G)(x)=G([x])\,.\]

Given a morphism $x\xrightarrow{\alpha}y$ in $\sB$ we have 
\[ G([x])\xrightarrow{G(\delta_1)}G([x\xrightarrow{\alpha}y])\xleftarrow{G(\delta_0)}G([y])\]
and, since $G(\delta_0)$ is an isomorphism we can define
\[ \tau(G)(\alpha)=G(\delta_0)^{-1}G(\delta_1)\colon G([x])\to G([y])\,.\]

We need to prove that $\tau(G)$ is a functor: applying $G$ to the commutative diagram   
\[ \xymatrix{[x]\ar[r]^-{\delta_1}\ar[dr]_{\Id}&[x\xrightarrow{\Id}x]\ar[d]^{\sigma_0}&[x]\ar[l]_-{\delta_0}\ar[dl]^{\Id}\\
&[x]&}\]
we prove that $\tau(G)$ preserves the identities. Given $[x\xrightarrow{\alpha}y\xrightarrow{\beta}z]\in N(\sB)_2$, applying $G$ to the commutative diagram 
\[ \xymatrix{[z]\ar[d]^{\delta_0}\ar[r]^{\delta_0}&[x\xrightarrow{\beta\alpha}z]\ar[d]^{\delta_1}&
[x]\ar[l]_-{\delta_1}\ar[d]^{\delta_1}\\
[y\xrightarrow{\beta}z]\ar[r]^-{\delta_0} &[x\xrightarrow{\alpha}y\xrightarrow{\beta}z]&
[x\xrightarrow{\alpha}y]\ar[l]_-{\delta_2}\\
&[y]\ar[ur]_{\delta_0}\ar[ul]^-{\delta_1}&}\]
we obtain $\tau(G)(\beta\alpha)=\tau(G)(\beta)\circ \tau(G)(\alpha)$.
Therefore $\tau$ is properly defined and its functoriality is clear.

\begin{proposition} 
The above functors $\epsilon^*$ and $\tau$ are equivalences of categories.
\end{proposition}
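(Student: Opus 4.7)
The strategy is to show that $\tau\circ\epsilon^{*}=\mathrm{id}_{\Fun(\sB,\mathbf{M})}$ on the nose, and to construct a natural isomorphism $\eta\colon \epsilon^{*}\circ\tau\Rightarrow \id_{\Fun^{\anchor}(N(\sB)_{\le k},\mathbf{M})}$. Given these two facts, a standard categorical argument shows $\epsilon^{*}$ is fully faithful (bijectivity on hom-sets follows from applying $\tau$ and using $\tau\epsilon^{*}=\id$) and essentially surjective (via $\eta$), hence an equivalence with quasi-inverse $\tau$.

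First, I would verify $\tau\epsilon^{*}=\id$ by direct computation. For $F\colon\sB\to\mathbf{M}$ and $x\in \sB$, $\tau(\epsilon^{*}F)(x)=F(\epsilon([x]))=F(x)$. Given $\alpha\colon x\to y$ in $\sB$, the face map $\delta_{0}\colon[y]\to[x\xrightarrow{\alpha}y]$ is an anchor, so $\epsilon(\delta_{0})=\id_{y}$, while $\epsilon(\delta_{1})=\alpha$. Thus $\tau(\epsilon^{*}F)(\alpha)=F(\id_{y})^{-1}F(\alpha)=F(\alpha)$.

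Next, for the natural isomorphism $\eta$, given $G\in \Fun^{\anchor}(N(\sB)_{\le k},\mathbf{M})$ and an object $x=[x_{0}\xrightarrow{\alpha_{1}}\cdots\xrightarrow{\alpha_{n}}x_{n}]$, let $f_{x}\colon[x_{n}]\to x$ be the ``last-vertex'' anchor corresponding to $[0]\to[n]$, $0\mapsto n$. Since $\epsilon^{*}\tau(G)(x)=G([x_{n}])$ and $f_{x}$ is an anchor, I define $\eta_{G,x}=G(f_{x})$, which is an isomorphism by hypothesis on $G$. Naturality in $G$ is immediate. For naturality in $x$, given $g\colon x\to y$ in $N(\sB)_{\le k}$ with $y=[y_{0}\to\cdots\to y_{m}]$ and $g\colon[n]\to[m]$, the relevant diagram is
\[ \xymatrix{G([x_{n}])\ar[r]^-{\tau(G)(\epsilon(g))}\ar[d]_{G(f_{x})}&G([y_{m}])\ar[d]^{G(f_{y})}\\
G(x)\ar[r]_-{G(g)}&G(y)\,.}\]
The key observation — and the main technical step — is that the $1$-simplex $z=[x_{n}\xrightarrow{\epsilon(g)}y_{m}]\in N(\sB)_{\le k}$ (available since $k\ge 2\ge 1$) admits an anchor $j\colon z\to y$ given by $[1]\to[m]$, $0\mapsto g(n)$, $1\mapsto m$, and one checks directly from the definition that $j\circ\delta_{1}=g\circ f_{x}$ and $j\circ\delta_{0}=f_{y}$. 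Applying $G$ and using $\tau(G)(\epsilon(g))=G(\delta_{0})^{-1}G(\delta_{1})$, one computes
\[ G(f_{y})\circ\tau(G)(\epsilon(g))=G(j)G(\delta_{0})G(\delta_{0})^{-1}G(\delta_{1})=G(j)G(\delta_{1})=G(g\circ f_{x}),\]
establishing commutativity.

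The only delicate point is the naturality square, and the obstacle lies entirely in identifying the auxiliary $1$-simplex $[x_{n}\to y_{m}]$ together with the anchor $j$ that simultaneously factors $g\circ f_{x}$ and $f_{y}$; once this is in place, the verification reduces to functoriality of $G$ and the invertibility of $G(\delta_{0})$. The argument works uniformly for all $k\in\{2,3,\dots,\infty\}$ because only $0$- and $1$-simplices are needed, well within the truncation.
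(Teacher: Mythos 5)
Your proposal is correct and follows essentially the same route as the paper: verify $\tau\circ\epsilon^{*}=\id$ directly and use the last-vertex anchors $[x_n]\to[x_0\to\cdots\to x_n]$ to produce the canonical isomorphism $\epsilon^{*}\tau(G)\simeq G$. The only difference is that you spell out the naturality check (via the auxiliary $1$-simplex $[x_n\xrightarrow{\epsilon(g)}y_m]$ and the anchor $j$), which the paper leaves implicit in the word \lq\lq canonical\rq\rq; your verification is correct.
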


\begin{proof} It is immediate from the definition that $\tau\circ \epsilon^*$ is the identity.
On the other hand every anchor map of type 
\[ [x_n]\to [x_0\to\cdots\to x_n]\]
induces, for every $G\in \Fun^{\anchor}(N(\sB)_{\le k},\mathbf{M})$, a canonical isomorphism
\[ \epsilon^*\tau(G)([x_0\to\cdots\to x_n])=\tau(G)(x_n)=G([x_n])\xrightarrow{\;\simeq\;}
G([x_0\to\cdots\to x_n])\,.\]
\end{proof}

\begin{proposition}\label{prop.equivalenzadef} 
Let $S_\pallino\in \Fun(\sB,\Alg_{\K})$ be a diagram of commutative algebras and $k\ge 2$.
Then the isomorphism classes of deformations of $S_\pallino$ are the same as the  
isomorphism classes of deformations of $\epsilon^*S_\pallino\in \Fun(N(\sB)_{\le k},\Alg_{\K})$.
\end{proposition}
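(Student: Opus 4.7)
The plan is to promote the equivalence of categories $\epsilon^* : \Fun(\sB,\mathbf{M}) \leftrightarrows \Fun^{\anchor}(N(\sB)_{\le k},\mathbf{M}) : \tau$, already established above for a general target $\mathbf{M}$, to a bijection between isomorphism classes of deformations. The essential observation is that the ``anchor condition'' is automatic for deformations, thanks to the infinitesimal lifting properties of flat $A$-modules.

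First I would instantiate the equivalence $\epsilon^*,\tau$ in the target $\mathbf{M}=\Alg_A$ for $A\in \Art_{\K}$, observing that both functors are pointwise (hence preserve flatness of $A$-algebras) and that they commute with the base change $-\otimes_A\K$. Consequently, if $S_{\pallino A}\in \Fun(\sB,\Alg_A)$ is a deformation of $S_\pallino$, the restriction $\epsilon^* S_{\pallino A}\in \Fun(N(\sB)_{\le k},\Alg_A)$ is pointwise flat and its reduction modulo $\mathfrak{m}_A$ is naturally identified with $\epsilon^* S_\pallino$, so it is a deformation of $\epsilon^* S_\pallino$.

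The key step, in the opposite direction, is the following claim: for every deformation $T_\pallino\in \Fun(N(\sB)_{\le k},\Alg_A)$ of $\epsilon^* S_\pallino$, the underlying diagram lies in the full subcategory $\Fun^{\anchor}(N(\sB)_{\le k},\Alg_A)$. Indeed, for any anchor map $f$, the functor $\epsilon$ sends $f$ to an identity morphism, so $\epsilon^* S_\pallino(f)$ is an identity; hence $T_\pallino(f)\otimes_A\K$ is an isomorphism between flat $A$-algebras, and Lemma~\ref{lem.nakayama}(1) forces $T_\pallino(f)$ itself to be an isomorphism. With this in hand, one can apply $\tau$ to obtain a diagram $\tau(T_\pallino)\in \Fun(\sB,\Alg_A)$ which is pointwise flat (because $\tau(T_\pallino)(x)=T_\pallino([x])$) and whose reduction is $\tau(\epsilon^* S_\pallino)\simeq S_\pallino$, hence a deformation of $S_\pallino$.

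Finally I would check compatibility with isomorphisms of deformations. Since $\epsilon^*$ and $\tau$ are inverse equivalences of categories, they preserve and reflect isomorphisms, and both are compatible with reduction modulo $\mathfrak{m}_A$; therefore $\epsilon^*$ and $\tau$ induce mutually inverse bijections between the isomorphism classes of deformations of $S_\pallino$ and of $\epsilon^* S_\pallino$. The only genuine subtlety in this argument is the automatic anchor condition, which is precisely where Nakayama's lemma for flat $A$-modules enters; the rest is a routine transport of structure through the equivalence $\epsilon^*\dashv\tau$.
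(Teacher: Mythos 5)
Your proposal is correct and follows essentially the same route as the paper: the forward direction is the observation that $\epsilon^*$ and $\tau$ are pointwise equivalences preserving flatness and commuting with $-\otimes_A\K$, and the key point in the reverse direction is exactly the paper's, namely that the anchor condition is automatic for any deformation of $\epsilon^*S_\pallino$ because anchors reduce to identities modulo $\mathfrak{m}_A$ and Lemma~\ref{lem.nakayama}(1) then forces them to be isomorphisms.
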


\begin{proof}
It is immediate from the definition that for every $A \in \Art_{\K}$ the equivalences of categories
\begin{center}
    \begin{tikzcd}
{\Fun(\sB, \Alg_{A}}) \arrow[r, "\epsilon^*", shift left=1] &  {\Fun^{\anchor}(N(\sB)_{\le k}, \Alg_{A})} \arrow[l, "\tau", shift left=1] 
\end{tikzcd}
\end{center}
preserve flatness.
Therefore, for every deformation $\phi\colon S_{A,\pallino} \to S_{\pallino}$ of $S_\pallino$ the map $\epsilon^*\phi\colon \epsilon^*S_{A,\pallino} \to \epsilon^*S_{\pallino}$ is a deformation of the diagram $\epsilon^*S_{\pallino}$.


In order to conclude the proof we only need to show that, 
if 
$R_{\pallino }\colon N(\sB)_{\le k} \to \Alg_A$ is a deformation of $\epsilon^* S_\pallino$, and  $f\colon \alpha \to \beta$ is an anchor in $N(\sB)_{\le k}$, then  $R_{f }$ is an  isomorphism. This implies that 
$R_{\pallino }\in  \Fun^{\anchor}(N(\sB)_{\le k}, \Alg_{A})$.
We have by definition that $\phi\colon R_{\pallino} \to \epsilon^* S_{\pallino}$ induces an isomorphism 
$ R_{\pallino}\otimes_A \K  \to \epsilon^* S_{\pallino}$ and, since $\epsilon^* S_{\alpha} \to \epsilon^* S_{\beta}$ is the identity, by the commutativity of the diagram
\begin{center}
    \begin{tikzcd}
    R_{\alpha}\otimes_A \K \arrow[d, "\phi", "\cong"'] \arrow[r] & R_{\beta}\otimes_A \K \arrow[d, "\phi", "\cong"']\\
    \epsilon^* S_{\alpha} \arrow[r, "\Id"'] & \epsilon^* S_{\beta}
    \end{tikzcd}
\end{center}
we obtain that $R_{\alpha}\otimes_A \K \to R_{\beta}\otimes_A \K$ is also an isomorphism. By Lemma \ref{lem.nakayama}, $R_{\alpha} \to R_{\beta}$ is an isomorphism too.

\end{proof}

\bigskip
\section{Reedy model structures}
\label{sec.Reedy}

We briefly recall the notion of Reedy category, for more details see \cite{Hir}.
We do so in view of Theorem \ref{thm.mod-ree}, which yields a model structure on the category $\Fun (\sD, \bM)$ of diagrams on a model category $\bM$ indexed by a Reedy category $\sD$.

\begin{definition}
A Reedy category is a small category $\sD$ together with two subcategories $\overrightarrow{\sD}, \overleftarrow{\sD}$, such that:
\begin{enumerate}
    \item $\Ob(\sD)=\Ob(\overrightarrow{\sD})=\Ob(\overleftarrow{\sD})$
    \item Every morphism $f $ in $\sD$ has a unique factorisation $f=gh$, where $g$ is in $\overrightarrow{\sD}$ and $h$ is in $\overleftarrow{\sD}$.
    \item There exists a function $\deg\colon \Ob(\sD) \to \mathbb{N}$ such that every non-identity morphism in $\overrightarrow{\sD}$ raises degree and every non-identity morphism in $\overleftarrow{\sD}$ lowers degree.

\end{enumerate}
\end{definition}

It is easy to see that in a Reedy category every isomorphism is an identity: if $f$ is an isomorphism 
and $f=g_1h_1$, $f^{-1}=g_2h_2$, $h_1g_2=g_3h_3$ are factorisations as in (2), then 
$g_1g_3$ must be the identity. A Reedy category is called direct if 
$\sD=\overrightarrow{\sD}$, or equivalently if $\overleftarrow{\sD}$ contains only the identities.

For instance, we have the following examples of Reedy categories:
\begin{enumerate}
    \item A category whose only morphisms are the identities is called discrete. Every discrete category is trivially a Reedy category. 
    
     \item Let $I$ be a finite poset such that there exists a function 
     $\deg\colon I \rightarrow \mathbb{N}$ such that $\deg(x) > \deg(y)$ for every $x > y$. Then  $I$ is a direct Reedy category.

    \item   The simplex category $\mathbf{\Delta}$ is a Reedy category, with $\deg([n])=n$, $\overrightarrow{\mathbf{\Delta}}$ the injective maps and $\overleftarrow{\mathbf{\Delta}}$ the surjective maps.

    \item If $\sC$ and $\sD$ are Reedy categories then so is the product $\sC \times \sD$, with $\overrightarrow{\sC \times \sD}= \overrightarrow{\sC} \times \overrightarrow{\sD}$, $\overleftarrow{\sC \times \sD}= \overleftarrow{\sC} \times \overleftarrow{\sD}$ and $\deg(c, d) = \deg(c) + \deg(d)$.
     
\end{enumerate}

If $a$ is an object of a category  $\sD$ we denote by $a\downarrow\sD$ the undercategory of maps $a \rightarrow b$ in $\sD$ and by $ {\sD} \downarrow a$ the overcategory of maps $b \rightarrow a$ in ${\sD}$.

\begin{definition} Let $\sD$ be a Reedy category and $a$ an object in $\sD$.
    \begin{enumerate}
        \item The matching category $M_a \sD$ of $\sD$ at $a$ is the full subcategory of 
        $a\downarrow\overleftarrow{\sD}$ containing all objects except the identity map of $a$.
        \item The latching category $L_a \sD$ of $\sD$ at $a$ is the full subcategory of 
        $\overrightarrow{\sD}\downarrow a$ containing all objects except the identity map of $a$.
    \end{enumerate}
\end{definition}

\begin{definition}\label{def.matchlatch}
    Let $\sD$ be a Reedy category, let $\bM$ be a complete and cocomplete category, let $X$ be a $\sD$-diagram in $\bM$, and $a$ be an object in $\sD$. For notational simplicity 
    $X$ also denotes the induced $M_a \sD$-diagram, with $X_{a \rightarrow b}= X_b$, and the induced  $L_a \sD$-diagram, with $X_{b \rightarrow a}= X_b$.
    \begin{enumerate}
        \item The matching object of $X$ at $a$ is $M_aX= \lim_{M_a\sD}X$.
        \item The latching object of $X$ at $a$ is $L_aX= \colim_{L_a\sD}X$.
    \end{enumerate}
    There are natural morphisms $L_aX \longrightarrow X$ and $X \longrightarrow M_aX$.
\end{definition}

The main use of Reedy categories originates from the following theorem: the category of diagrams in a model category indexed by a Reedy category has a model category structure.

\begin{theorem}[Reedy-Kan]\label{thm.mod-ree}
Let $\sD= (\overrightarrow{\sD}, \overleftarrow {\sD})$ be a Reedy category, and $\bM$ a model category. There is a model structure on $\Fun (\sD, \bM)$ where a map $f\colon X \longrightarrow Y$ is:
\begin{enumerate}
    \item a weak equivalence iff $X_i \longrightarrow Y_i$ is a weak equivalence for all $i \in \sD$;
    \item a fibration iff $X_i \longrightarrow M_iX \times_{M_iY} Y_i$ is a fibration for all $i \in \sD$;
    \item a cofibration iff $X_i \coprod_{L_iX} L_iY \longrightarrow Y_i$ is a cofibration for all $i \in \sD$.
\end{enumerate}
\end{theorem}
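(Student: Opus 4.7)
The plan is to proceed by induction on Reedy degree, reducing each model category axiom to the corresponding axiom in $\bM$. The two-out-of-three axiom (M1) is immediate because weak equivalences are defined pointwise. The retract axiom (M2) follows from the pointwise case for weak equivalences and from functoriality of the relative latching/matching constructions $X_a \amalg_{L_a X} L_a Y \to Y_a$ and $X_a \to M_a X \times_{M_a Y} Y_a$: a retract of a Reedy (co)fibration induces pointwise retracts of the corresponding relative maps in $\bM$, which are then (co)fibrations there.

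For the factorization axiom (M4), given $f\colon X \to Y$ in $\Fun(\sD,\bM)$ I build a factorization $X \xrightarrow{i} Z \xrightarrow{p} Y$ inductively on the degree of objects of $\sD$. Suppose $Z$, $i$ and $p$ have been defined on the full subcategory $\sD_{<n}$ of objects of degree $<n$, compatibly with the diagram structure. Then for every object $a$ of degree $n$, the objects $L_a Z$ and $M_a Z$ depend only on this lower-degree data, and one obtains a canonical map
\[ X_a \amalg_{L_a X} L_a Z \longrightarrow M_a Z \times_{M_a Y} Y_a. \]
Applying axiom (M4) in $\bM$ factors this as a (trivial) cofibration followed by a trivial (fibration), and the intermediate object is declared to be $Z_a$; the factorization comes equipped with the required structure maps $L_a Z \to Z_a$, $X_a \to Z_a$, $Z_a \to M_a Z$ and $Z_a \to Y_a$. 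Uniqueness of the Reedy decomposition of morphisms of $\sD$ guarantees that these choices assemble into a functor, and by construction $i$ is a Reedy (trivial) cofibration while $p$ is a Reedy trivial (fibration).

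For the lifting axiom (M3), given a commutative square with left vertical a (trivial) Reedy cofibration $i\colon X\to Y$ and right vertical a trivial (Reedy) fibration $p\colon W\to V$, a lift is built inductively on degree. At an object $a$ for which the lift has already been defined on objects of smaller degree, the problem reduces to solving the square
\[
\begin{tikzcd}
X_a \amalg_{L_a X} L_a Y \arrow[r] \arrow[d] & W_a \arrow[d] \\
Y_a \arrow[r] & M_a W \times_{M_a V} V_a
\end{tikzcd}
\]
in $\bM$, where the left vertical is a (trivial) cofibration by the Reedy hypothesis on $i$ and the right vertical is a trivial (fibration) by the Reedy hypothesis on $p$. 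Axiom (M3) in $\bM$ then furnishes the needed lift, which one checks is automatically compatible with the already-constructed lifts by the uniqueness of Reedy decompositions.

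The main obstacle is bookkeeping: one must verify that the classes of "pointwise (trivial) fibrations" and of "Reedy fibrations that are pointwise weak equivalences" coincide (and dually for cofibrations), so that the above arguments can freely use either characterization. This is handled by an auxiliary induction on degree showing that, in the inductive construction, a trivial fibration at the relative level $Z_a \to M_a Z \times_{M_a Y} Y_a$ produces a pointwise weak equivalence $Z_a \to Y_a$ via two-out-of-three, once the lower-degree instance ensures that $M_a Z \to M_a Y$ is a trivial fibration. Dually, one treats trivial cofibrations. Once this latching/matching dictionary is established, the model structure axioms assemble exactly as sketched above; the combinatorics of the unique Reedy factorization of morphisms is what makes every inductive definition coherent.
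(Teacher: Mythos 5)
The paper does not actually prove this theorem --- it only cites \cite[15.3]{Hir} --- and your sketch is a correct outline of exactly that standard degree-induction argument: reduce each axiom to $\bM$ via the relative latching maps $X_a\amalg_{L_aX}L_aY\to Y_a$ and relative matching maps $X_a\to M_aX\times_{M_aY}Y_a$, build factorisations and lifts inductively using the unique $\overleftarrow{\sD}$-then-$\overrightarrow{\sD}$ factorisation of morphisms, and establish the characterisation of trivial (co)fibrations by an auxiliary induction. One caveat on phrasing: the dictionary you need is not that ``pointwise trivial fibrations'' coincide with Reedy trivial fibrations (false in general, since a pointwise trivial fibration need not be a Reedy fibration), but that a map is a Reedy fibration \emph{and} a pointwise weak equivalence if and only if every relative matching map is a trivial fibration --- which is what your two-out-of-three induction actually proves, so the content is right even though the statement of the lemma is slightly off.
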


For a proof, see \cite[15.3]{Hir}.

We call Reedy weak equivalences, Reedy fibrations and Reedy cofibrations the weak equivalences, fibrations and cofibrations of this model structure, to avoid confusion with other model structures on the same category. For example, the commutative square 
\[ \xymatrix{A\ar[r]^f\ar[d]_g&B\ar[d]\\
C\ar[r]&D}\]
may be considered as a diagram over the Reedy poset of subsets of $\{0,1\}$. Then it 
is a Reedy fibrant diagram if and only if $A,B,C,D$ are fibrant objects; it is 
a Reedy cofibrant diagram if and only if $A$ is a cofibrant object and the three maps $f,g$ and 
$B\amalg_AC\to D$ are cofibrations.

We say that a map $f\colon X \rightarrow Y$ in $\Fun (\sD, \bM)$ is a pointwise weak equivalence (cofibration, fibration) if $f_i: X_i \rightarrow Y_i$ is a weak equivalence (cofibration, fibration) for all $i \in \sD$.


\begin{lemma}\label{lem.reedyproj} Let $\sD$ be a Reedy category. Then the Reedy model structure on
$\Fun(\sD,\CDGA_{\K}^{\le 0})$ coincides with the projective model structure if and only if every object is Reedy fibrant.
\end{lemma}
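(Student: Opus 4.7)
The plan is to exploit that both the Reedy and the projective model structures share the same class of weak equivalences (pointwise quasi-isomorphisms), so it suffices to show they share the same class of fibrations. The direction ($\Rightarrow$) is essentially automatic: the terminal object of $\CDGA_{\K}^{\le 0}$ is the zero algebra and every map to $0$ is trivially surjective in negative degrees, hence every diagram in $\Fun(\sD, \CDGA_{\K}^{\le 0})$ is projectively fibrant. If the two model structures coincide then every diagram is Reedy fibrant.

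For the converse, assume every diagram is Reedy fibrant. That a Reedy fibration is always a pointwise fibration is a general fact about Reedy model categories (see \cite{Hir}): an induction on $\deg(i)$, using that $M_i\sD$ is itself an inverse Reedy subcategory, shows $M_if \colon M_iX \to M_iY$ is a fibration, so that $f_i$ factors as $X_i \to M_iX \times_{M_iY} Y_i \to Y_i$, a composition of two fibrations. The real content is the reverse inclusion: under the hypothesis, every pointwise fibration is a Reedy fibration.

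The key device is a trivial extension trick. Given a pointwise fibration $f \colon X \to Y$, let $K_\pallino = \ker f_\pallino$; each $K_a$ is a square-zero graded ideal of $X_a$, and functoriality of the kernel makes $K$ a subdiagram of $X$ as complexes of vector spaces. I form the trivial extension $X \oplus K$ in $\Fun(\sD, \CDGA_{\K}^{\le 0})$, with multiplication $(x,k)(x',k')=(xx', xk'+kx')$. Since $X \oplus K$ is Reedy fibrant by hypothesis, the map $(X \oplus K)_i \to M_i(X \oplus K)$ is surjective in negative degrees; and since matching objects are limits computed at the vector-space level with the inherited algebra structure, one has $M_i(X \oplus K) = M_iX \oplus M_iK$ as trivial extensions. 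Consequently both $X_i \to M_iX$ and $K_i \to M_iK$ are surjective in negative degrees, and by left-exactness of limits $M_iK = \ker(M_iX \to M_iY)$ in those degrees. Given $(m,y) \in (M_iX \times_{M_iY} Y_i)^{<0}$, I lift $y$ to $x_0 \in X_i^{<0}$ via the pointwise fibration $f_i$; the difference $m - m_0$ (where $m_0$ is the image of $x_0$ in $M_iX$) lies in $\ker(M_iX \to M_iY) = M_iK$, so lifts to some $x_1 \in K_i^{<0}$; then $x = x_0 + x_1 \in X_i^{<0}$ satisfies $f_i(x) = y$ and has image $m$ in $M_iX$, showing that $X_i \to M_iX \times_{M_iY} Y_i$ is surjective in negative degrees, i.e.\ a fibration.

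The main obstacle I expect is the careful justification of the two identifications $M_i(X \oplus K) = M_iX \oplus M_iK$ (as trivial extensions of DG-algebras) and $M_iK = \ker(M_iX \to M_iY)$ (in negative degrees), which together power the kernel-diagram construction and make the hypothesis that every diagram is Reedy fibrant applicable to the auxiliary diagram $X \oplus K$.
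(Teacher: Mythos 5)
Your proof is correct and follows essentially the same strategy as the paper's: both easy directions are handled identically, and the substantive implication (pointwise fibration $\Rightarrow$ Reedy fibration) is obtained by applying the hypothesis of Reedy fibrancy to an auxiliary diagram that packages the kernel of $f$, followed by the same two-step lifting of an element of $M_iX\times_{M_iY}Y_i$. The only difference is the choice of auxiliary object: the paper uses the fibre product $K=c(\K)\times_Y X$ of $f$ with the constant diagram $c(\K)\to Y$ (a unital DG-subalgebra of $X$ agreeing with $\ker f$ in negative degrees, for which $M_iK\cong M_ic(\K)\times_{M_iY}M_iX$ is immediate since limits commute with limits), whereas you use the trivial extension $X\oplus\ker f$; both devices serve the same purpose of turning the non-unital kernel into an honest object of $\Fun(\sD,\CDGA_{\K}^{\le 0})$, and your identifications $M_i(X\oplus K)=M_iX\oplus M_iK$ and $M_iK=\ker(M_iX\to M_iY)$ are correctly justified by left-exactness of limits. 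One inessential slip: $\ker f_a$ is of course \emph{not} in general a square-zero ideal of $X_a$; but this claim is never used, since the trivial extension $(x,k)(x',k')=(xx',xk'+kx')$ only requires $\ker f_\pallino$ to be an $X_\pallino$-module, which it is.
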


\begin{proof} By definition the two model structures have the same weak equivalences.
Let $\varphi\colon X \to Y$ be a morphism in $\Fun(\sD, \CDGA_{\K}^{\le 0})$. 
 If $\varphi$ is a Reedy fibration, then it is not difficult to prove it is a pointwise fibration (\cite{Hir}, 15.3.11); therefore  if  every pointwise fibration is a Reedy fibration then the two model structures coincide.

Assume that every object is Reedy fibrant and that
 $\varphi_i\colon X_i \longrightarrow Y_i$  is a fibration in $\CDGA_{\K}^{\le 0}$ for all $i \in \sD$; we want to prove that $\varphi$ is a Reedy fibration. 
 We denote by $c(\K)\colon \sD\to  \CDGA_{\K}^{\le 0}$ the constant diagram $i\mapsto \K$, and by 
 $K=c(\K)\times_{Y}X$ the fibre product of $\varphi$ and the initial morphism $c(\K)\to Y$. Note that $M_i c(\K)$ is concentrated in degree $0$. Since the fibre product and the matching objects are both limits, they commute by Fubini's theorem \cite[Prop. 6.2.8]{leinster}, and we have $M_i(K)= M_i(c(\K)\times_{Y}X) \cong M_i c(\K) \times_{M_i Y}M_iX$. Thus we have a morphism of cartesian squares:  
 \begin{center}
     \begin{tikzcd}[column sep=small]
K_i \arrow[ddd] \arrow[rrr] \arrow[rd] &  &  & X_i \arrow[ddd, "\varphi_i"] \arrow[ld] \\
 & M_iK \arrow[d] \arrow[r] & M_iX \arrow[d, "\widehat{\varphi_i}"] &  \\
 & M_ic(\K) \arrow[r] & M_iY &  \\
\K \arrow[rrr] \arrow[ru] &  &  & Y_i \arrow[lu]
\end{tikzcd}
 \end{center}
The map $\varphi_i\colon X_i \to Y_i $ induces $\widehat{\varphi_i}\colon M_iX \to M_iY$; 
 let $Y_i \times_{M_iY} M_iX$ be the fibre product of $\widehat{\varphi_i}$ and the natural map $Y_i\to M_iY$.
We have to show that the map $X_i \to Y_i \times_{M_iY} M_iX$ is surjective in strictly negative degree.
 Let $(\alpha, \beta) \in Y_i \times_{M_iY} M_iX$, with $\overline{\alpha}=\overline{\beta} < 0$. Without loss of generality, because of the surjectivity of $\varphi_i$, we can assume $\alpha=0$, so $\widehat{\varphi_i}(\beta)=0$, which means $\beta$ lifts to $(0, \beta) \in M_iK$, and then to $K_i$, since the map $K_i \to M_iK$ is by hypothesis a fibration. By the commutativity of the above diagram, we have the thesis.
 
Conversely, if the Reedy and projective model structures coincide, an object is Reedy fibrant if and only if it is point-wisely fibrant, and that is clearly true in $\cdgamz$. 
\end{proof}

The following result is clear.

\begin{lemma} The simplex category $N(\sB)$ is a Reedy category, where the direct subcategory 
$\overrightarrow{N(\sB)}$ is defined by injective maps and the inverse subcategory 
$\overleftarrow{N(\sB)}$ by surjective maps. The same applies to its subcategories $N(\sB)_{\le k}$.
\end{lemma}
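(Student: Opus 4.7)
The plan is to verify directly the three axioms of a Reedy category for the pair $(\overrightarrow{N(\sB)},\overleftarrow{N(\sB)})$ by reducing to the analogous well-known facts for the classical simplex category $\mathbf{\Delta}$, keeping careful track of the additional data (the objects $x_i$ of $\sB$ and the arrows $\alpha_i$ between them). Axiom (1) is immediate: all three categories have the same objects by construction. The degree function in axiom (3) is the obvious one, $\deg(x)=n$ whenever $x\in N(\sB)_{n}$.

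The main content is axiom (2), unique factorisation. Given a morphism
\[ f\colon x=[x_0\xrightarrow{\alpha_1}\cdots\xrightarrow{\alpha_n}x_n]\longrightarrow y=[y_0\xrightarrow{\beta_1}\cdots\xrightarrow{\beta_m}y_m]\]
in $N(\sB)$, the underlying monotone map $f\colon[n]\to[m]$ has a unique epi-mono factorisation $[n]\xrightarrow{h}[k]\xrightarrow{g}[m]$ in $\mathbf{\Delta}$, where $k+1=|f([n])|$ and the image of $f$ is $\{g(0)<g(1)<\cdots<g(k)\}$. The only candidate for the intermediate object in $N(\sB)_k$ is
\[ z=[\,y_{g(0)}\xrightarrow{\gamma_1}y_{g(1)}\xrightarrow{\gamma_2}\cdots\xrightarrow{\gamma_k}y_{g(k)}\,],\qquad \gamma_j=\beta_{g(j)}\circ\cdots\circ\beta_{g(j-1)+1},\]
and one checks that $g\colon z\to y$ (injective) and $h\colon x\to z$ (surjective) are well-defined morphisms in $N(\sB)$: the compatibility $y_{g(j)}=z_j$ is clear, while for $h$ the identity $x_i=z_{h(i)}=y_{g(h(i))}=y_{f(i)}$ and the corresponding compatibility of arrows hold because $f=gh$ in $\mathbf{\Delta}$ already encodes these facts. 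Uniqueness follows because any factorisation $f=gh$ with $g$ injective and $h$ surjective determines the underlying epi-mono factorisation of $f\colon[n]\to[m]$, and the intermediate simplex $z$ is then rigidly determined: its objects must equal $y_{g(j)}$ and its arrows are forced by the condition that $g$ be a morphism in $N(\sB)$.

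For axiom (3), an injective monotone map $[n]\to[m]$ satisfies $n\le m$, with equality only when the map is the identity of $[n]$; a morphism $f$ in $N(\sB)$ whose underlying map is $\id_{[n]}$ is automatically the identity of $x$, since then $y_i=x_i$ and $\beta_i=\alpha_i$ for all $i$. Hence every non-identity map in $\overrightarrow{N(\sB)}$ strictly raises degree, and the dual argument shows that every non-identity map in $\overleftarrow{N(\sB)}$ strictly lowers it. Finally, the statement for $N(\sB)_{\le k}$ follows because the intermediate object $z\in N(\sB)_k$ constructed above lives in dimension $\le\min(n,m)\le k$, so the factorisation remains internal to $N(\sB)_{\le k}$, and restricting the degree function poses no issue. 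The only mild subtlety is checking that the identities in $N(\sB)$ really are the only morphisms lying in both $\overrightarrow{N(\sB)}$ and $\overleftarrow{N(\sB)}$; this is the observation above that a bijective monotone map is the identity on $[n]$, which then forces the morphism in $N(\sB)$ to be the identity.
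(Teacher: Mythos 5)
Your proof is correct; the paper states this lemma without proof (``the following result is clear''), and your argument --- lifting the unique epi--mono factorisation of the underlying monotone map from $\mathbf{\Delta}$ and checking that the intermediate simplex $z$ is forced --- is exactly the verification the authors have in mind. The key points are all present: a morphism of $N(\sB)$ is determined by its underlying monotone map, so uniqueness of the factorisation reduces to uniqueness in $\mathbf{\Delta}$ plus the rigidity of $z$, and a morphism over $\id_{[n]}$ is necessarily an identity, which gives axiom (3). The only blemish is notational: you use $k$ both for the truncation level in $N(\sB)_{\le k}$ and for the dimension of the intermediate object, so the final inequality should read that $z$ lives in dimension $\le\min(n,m)$, which is $\le k$ in the sense of the truncation bound.
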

  
In particular, when $\sB=\{*\}$ we recover the usual Reedy structure on $\mathbf{\Delta}$.
By Theorem \ref{thm.mod-ree} we have the Reedy model structure on the category $\Fun(N(\sB), {\cdgamz})$; we show that the Reedy and projective model structures on this category coincide, using Lemma $\ref{lem.reedyproj}$.

\begin{theorem} Every object in $\Fun(N(\sB)_{\le k}, {\cdgamz})$ is Reedy fibrant, and so
the Reedy model structure coincides with the projective model structure.
\end{theorem}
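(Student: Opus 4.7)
The plan is to apply Lemma~\ref{lem.reedyproj}: given $X \in \Fun(N(\sB)_{\le k}, \cdgamz)$ and an object $i = [x_0 \xrightarrow{\alpha_1} x_1 \cdots \xrightarrow{\alpha_n} x_n]$ in $N(\sB)_{\le k}$, I would show that the canonical map $X_i \to M_i X$ is surjective in every strictly negative degree.

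The first step is to describe $M_i N(\sB)_{\le k}$ explicitly. Its non-identity objects are surjections $i \to i'$ realised in $N(\sB)_{\le k}$, and each such surjection factors as a composition of elementary codegeneracies $\sigma_j \colon i \to \sigma_j(i)$, where $\sigma_j$ exists in $N(\sB)_{\le k}$ precisely when $\alpha_{j+1} = \Id_{x_j}$. Set
\[ I = \{j \in \{0,\ldots,n-1\} \mid \alpha_{j+1} = \Id_{x_j}\}\,. \]
For each $j \in I$ the coface $\delta_j \colon \sigma_j(i) \to i$ is also a morphism of $N(\sB)_{\le k}$ and a section of $\sigma_j$, so we have maps $X_{\sigma_j} \colon X_i \to X_{\sigma_j(i)}$ and sections $X_{\delta_j} \colon X_{\sigma_j(i)} \to X_i$ available for every $j \in I$.

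Fix $d < 0$ and a compatible family $(z_\tau)_\tau \in (M_i X)^d$, and set $y_j = z_{\sigma_j} \in X^d_{\sigma_j(i)}$ for $j \in I$. The matching compatibility specialises to the cosimplicial identity
\[ X_{\sigma_{j'-1}}(y_j) = X_{\sigma_j}(y_{j'}),\qquad j < j' \text{ in } I\,, \]
which is precisely the hypothesis of Proposition~\ref{prop.cosimpl}. Writing $I = \{i_0 < \cdots < i_s\}$, I would then mimic its recursive construction in additive form by setting
\begin{align*}
z_{i_s} &= X_{\delta_{i_s}}(y_{i_s})\,,\\
z_{i_p} &= z_{i_{p+1}} - X_{\delta_{i_p}} X_{\sigma_{i_p}}(z_{i_{p+1}}) + X_{\delta_{i_p}}(y_{i_p}),\qquad 0 \le p < s\,,
\end{align*}
and letting $z = z_{i_0} \in X_i^d$. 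The verification $X_{\sigma_{i_p}}(z) = y_{i_p}$ for all $p$ is then a transcription of the argument in the proof of Proposition~\ref{prop.cosimpl}; crucially, as noted there, the construction uses only the structure maps $\sigma_j, \delta_j$ with $j \in I$, which is precisely what $N(\sB)_{\le k}$ provides.

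To conclude, since every non-identity surjection out of $i$ factors through the $\sigma_j$ with $j \in I$, both $(z_\tau)_\tau$ and $(X_\tau(z))_\tau$ are compatible families in $M_i X$ agreeing on the generators, hence they agree everywhere and $z$ is the required lift. The main obstacle I foresee is the first step, namely verifying that the matching category has exactly the combinatorial structure described and that its non-elementary morphisms impose no further constraints beyond the cosimplicial identities among the $\sigma_j$ with $j \in I$. Once this combinatorial point is granted, the remainder of the argument is essentially a direct application of Proposition~\ref{prop.cosimpl}.
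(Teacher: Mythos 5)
Your proposal is correct and follows essentially the same route as the paper: reduce to surjectivity of $X_i\to M_iX$ in negative degrees, identify the set $I$ of indices where $\alpha_{j+1}=\Id$, apply the recursive construction of Proposition~\ref{prop.cosimpl} (in additive form) using only the degeneracies $\sigma_j$ and their sections $\delta_j$ for $j\in I$, and observe that a compatible family in $M_iX$ is determined by its values on these elementary degeneracies. The combinatorial point you flag at the end is exactly the step the paper phrases as ``the map $X_{\alpha^{(m)}}\to V$ factors through the inclusion $M_{\alpha^{(m)}}X\to V$'', so your argument matches the paper's in substance.
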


\begin{proof} In view of the definition of fibrations in $\cdgamz$, it is 
sufficient to show that for every $X \in \Fun(N(\sB)_{\le k}, \mathbf{Grp})$ the map $X_\alpha \to M_\alpha X$ is surjective for all $\alpha \in N(\sB)_{\le k}$.
Fix $n\le k$ and let $\alpha^{(m)} \in  N(\sB)_n$,
$$ \alpha^{(m)}= [x_0 \xrightarrow{h_1} x_1 \xrightarrow{h_2} \cdots \xrightarrow{h_n} x_n],$$
where $m$ is the number of morphisms $h_i$ equal to the identity, $0\leq m \leq n$. If $m=0$ there is nothing to prove, because the matching category of $\alpha^{(0)}$ is empty, so every $X_{\alpha^{(0)}}$ is automatically fibrant.

In case  $n=m=1$, we have 
\begin{center}
    \begin{tikzcd}
{[a\rightarrow a]} \arrow[d, "\sigma_0"] \\
{[a]} \arrow[u, "\delta_0", bend left=60] \arrow[u, "\delta_1"', bend right=60]
\end{tikzcd}
\end{center}
with $\sigma_0\delta_0= \sigma_0 \delta_1 = \Id$ by the cosimplicial identities, so $X(\sigma_0)\colon X_{\alpha^{(1)}} \to M_{\alpha^{(1)}}X$ has a section and hence is surjective. 

In general, 
assume $n\ge 2$ and $0 < m \leq n$; 
let $I=\{i \in \{0, \cdots n-1 \}\mid h_{i+1}= \Id \}$, $|I|=m$. For every $i \in I$ we have a degeneracy map $\sigma_{i}\colon \alpha^{(m)} \rightarrow \alpha_{i}^{(m-1)}$, where the $\alpha_{i}^{(m-1)}$ are suitable objects in $N(\sB)_{n-1}$. From each $\alpha_{i}^{(m-1)}$ there are $m-1$ degeneracy maps to other objects $\alpha_{i,j}^{(m-2)} \in N(\sB)_{n-2}$, and so on. For example, for $n=3,\ m=2,\ I=\{0,2\}$:
\begin{center}
    \begin{tikzcd}[column sep=small]
 & {[a \rightarrow a \rightarrow b \rightarrow b]} \arrow[ld, "\sigma_0"'] \arrow[rd, "\sigma_2"] &  \\
{[a \rightarrow b \rightarrow b]} \arrow[rd, "\sigma_1"'] &  & {[a\rightarrow a \rightarrow b]} \arrow[ld, "\sigma_0"] \\
 & {[a \rightarrow b]} & 
\end{tikzcd}
\end{center}
For every map $\sigma_i\colon \alpha^{(m)} \rightarrow \alpha_{i}^{(m-1)}$ we also have two sections $\delta_i, \delta_{i+1}\colon  \alpha_i^{(m-1)} \rightarrow \alpha^{(m)}$, so
using an identical computation to Lemma \ref{prop.cosimpl}, we have that $X_{\alpha^{(m)}}$ maps surjectively into 
$$ V:= \{ x_l \in \alpha_l^{(m-1)},\ l \in I\ |\ \sigma_{i-1} x_j = \sigma_{j} x_i \ \text{ for every } i,j \in I, \ i> j \}.$$
It is clear that the map $X_{\alpha^{(m)}}\to V$ factors through the inclusion $M_{\alpha^{(m)}} X \to V$, so $X_{\alpha^{(m)}}$ also maps surjectively into $M_{\alpha^{(m)}} X$, and we have the thesis.
\end{proof}

Finally, the following corollary is an immediate consequence of the above results.

\begin{corollary}\label{cor.main}
Let $\sB$ be a small category and 
$S_\pallino\colon \sB\to \mathbf{Alg}_{\K}$ a diagram of unitary commutative algebras. 
Let $\epsilon\colon N(\sB)_{\le k}\to \sB$ be  the functor defined in \ref{def.approximation} for some $k\ge 2$ and 
let $R_\pallino\to S_\pallino\circ \epsilon $ be a Reedy cofibrant replacement in 
$\Fun(N(\sB)_{\le k},\CDGA_{\K}^{\le 0})$. Then 
the DG-Lie algebra $\Der_{\K}^*(R_\pallino,R_\pallino)$ controls the deformations of 
$S_\pallino$.
\end{corollary}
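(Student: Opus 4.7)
The plan is to derive Corollary~\ref{cor.main} as an essentially formal consequence of three ingredients that have already been established: Theorem~\ref{thm.main} (cofibrant replacement in the projective model structure produces a controlling DG-Lie algebra), Proposition~\ref{prop.equivalenzadef} (deformations of $S_\pallino$ agree with deformations of $\epsilon^{*}S_\pallino$ for $k\ge 2$), and the theorem just proved that the Reedy and projective model structures coincide on $\Fun(N(\sB)_{\le k},\cdgamz)$. No new homological input should be required.

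First I would apply Theorem~\ref{thm.main} to the diagram $\epsilon^{*}S_\pallino=S_\pallino\circ\epsilon\in \Fun(N(\sB)_{\le k},\Alg_{\K})$, with indexing category $\sD=N(\sB)_{\le k}$. For this we need a cofibrant replacement of $\epsilon^{*}S_\pallino$ in $\Fun(N(\sB)_{\le k},\cdgamz)$ with respect to the \emph{projective} model structure. Here the previous theorem enters: every object is Reedy fibrant, so Reedy and projective cofibrations coincide on this diagram category, and therefore the given Reedy cofibrant replacement $R_\pallino\to S_\pallino\circ\epsilon$ is automatically a projective cofibrant replacement. Theorem~\ref{thm.main} then yields that $\Der_{\K}^{*}(R_\pallino,R_\pallino)$ controls the deformations of $\epsilon^{*}S_\pallino$, in the sense that its Maurer--Cartan functor modulo gauge is isomorphic to $\Def_{\epsilon^{*}S_\pallino}$.

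Next I would invoke Proposition~\ref{prop.equivalenzadef} to identify $\Def_{\epsilon^{*}S_\pallino}\simeq \Def_{S_\pallino}$ as functors $\Art_{\K}\to \Set$. Composing the two isomorphisms of functors
\[
\Def_{L}\;\xrightarrow{\;\sim\;}\;\Def_{\epsilon^{*}S_\pallino}\;\xrightarrow{\;\sim\;}\;\Def_{S_\pallino},\qquad L=\Der_{\K}^{*}(R_\pallino,R_\pallino),
\]
gives exactly the controlling statement claimed by the corollary.

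The only point requiring some care—and the closest thing to an obstacle—is ensuring that the equivalence of deformation functors in Proposition~\ref{prop.equivalenzadef} is compatible with the Maurer--Cartan description provided by Theorem~\ref{thm.main}, rather than being just a bijection on isomorphism classes. But since both identifications are natural in $A\in \Art_{\K}$ and the equivalence $\epsilon^{*}\dashv \tau$ is compatible with tensor products and flatness (as used in the proof of Proposition~\ref{prop.equivalenzadef}), this compatibility is automatic. No further calculation is needed.
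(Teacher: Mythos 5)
Your proposal is correct and matches the paper's intended argument exactly: the paper simply declares Corollary~\ref{cor.main} an immediate consequence of Theorem~\ref{thm.main}, Proposition~\ref{prop.equivalenzadef}, and the coincidence of the Reedy and projective model structures on $\Fun(N(\sB)_{\le k},\cdgamz)$, which is precisely the chain of reductions you spell out. Your final worry about Maurer--Cartan compatibility is unnecessary, since ``controls'' only asserts an isomorphism of deformation functors, so composing the two natural isomorphisms already suffices.
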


An example of deformation problem which is naturally encoded by a diagram over a non-Reedy category is the case of deformations of pairs (algebra, idempotent), cf. \cite[Example 5.1]{MM2}.

Let $e\colon R\to R$ be an idempotent morphism of an algebra $R\in \Alg_{\K}$, then the deformations of the pair 
$(R,e)$ can be interpreted as the deformations of the diagram 
\[ R_{\pallino}\colon\quad\xymatrix{R\ar@(dr,ur)_e}\]
over the (non-Reedy) category $\sB$ that has one object $\bullet$ and two morphisms 
$\{\id,\alpha\}$, with 
$\alpha^2=\alpha$.
By Proposition~\ref{prop.equivalenzadef}, the diagrams $R_\pallino$ 
and $\epsilon^*R_\pallino\in \Fun(N(\sB)_{\le 2},\Alg_{\K})$ have the same deformation theory.
Moreover, since $\epsilon^*R_\pallino\in \Fun^{\anchor}(N(\sB)_{\le 2},\Alg_{\K})$, it is easy to see that 
the diagram  $\epsilon^*R_\pallino$ has the same deformation theory of the diagram 
\begin{equation*}\label{equ.diagramamidempotent} 
\xymatrix{R\ar@(ur,ul)[rr]^{e}\ar@(dr,dl)[rr]_{\id}&&R
\ar@(ur,ul)[rr]^{e}\ar@(dr,dl)[rr]_{\id}\ar[rr]^{\id}&&R},
\qquad R\in \bM,\quad e^2=e\,.\end{equation*}
of algebras over the following (direct Reedy) subcategory of $N(\sB)_{\le 2}$:
\begin{equation*} \xymatrix{\bullet\ar@(ur,ul)[rr]^{\delta_1}\ar@(dr,dl)[rr]_{\delta_0}&&{[\bullet\xrightarrow[\,]{\alpha}\bullet]}
\ar@(ur,ul)[rr]^{\delta_2}\ar@(dr,dl)[rr]_{\delta_0}\ar[rr]^{\delta_1}&&
{[\bullet\xrightarrow{\alpha}\bullet\xrightarrow[\,]{\alpha}\bullet]}},
\qquad \begin{matrix}\;\;\delta_0^2=\delta_1\delta_0\,,\\[3pt]
\delta_0\delta_1=\delta_2\delta_0\,,\\[3pt]
\;\;\delta_1^2=\delta_2\delta_1\,.\end{matrix}
\end{equation*}

\begin{acknowledgement} 
Both  authors thank Francesco Meazzini for useful discussions about the topics of this paper, and the anonymous referee for useful comments and remarks. M.M. is partially supported by Italian MIUR under PRIN project 2017YRA3LK \lq\lq Moduli  and Lie theory\rq\rq.
\end{acknowledgement}

\end{document}